\def\hlinew#1{%
  \noalign{\ifnum0=`}\fi\hrule \@height #1 \futurelet
   \reserved@a\@xhline}
\newtheorem{theorem}{Theorem}[section]
\newtheorem{definition}[theorem]{Definition}
\newtheorem{conjecture}[theorem]{Conjecture}
\newtheorem{question}[theorem]{Question}
\newtheorem{lemma}[theorem]{Lemma}
\newtheorem{claim}{Claim}[section]
\begin{document} 

\title{A spectral extremal problem on 
non-bipartite\\ triangle-free graphs\thanks{This paper was firstly announced on April 3, 2023, and it was later published on Electron. J. Combin. 31 (1) (2024), \#P1.52. 
This is the final version; see \url{https://doi.org/10.37236/12009}.  
Following reviewer's suggestion, 
we have changed the original title 
``A solution of Zhai-Shu's question on spectral extremal problems''. 
E-mail addresses: \url{ytli0921@hnu.edu.cn} (Y. Li), 
\url{fenglh@163.com} (L. Feng),  
\url{ypeng1@hnu.edu.cn} (Y. Peng, corresponding author).}  }

\author{Yongtao Li$^{\dag}$, Lihua Feng$^{\dag}$, Yuejian Peng$^{\ddag}$ \\[2ex] 
{\small $^{\dag}$School of Mathematics and Statistics, Central South University} \\ 
{\small Changsha, Hunan, 410083, P.R. China}  
\\ 
{\small $^{\ddag}$School of Mathematics, Hunan University} \\
{\small Changsha, Hunan, 410082, P.R. China } 
 }


\maketitle

\vspace{-1cm}

\begin{abstract} 
A theorem of Nosal and Nikiforov states that 
if $G$ is a triangle-free graph with $m$ edges, then 
$\lambda (G)\le \sqrt{m}$, where the 
equality holds if and only if 
$G$ is a complete bipartite graph.  
A well-known spectral conjecture of Bollob\'{a}s and Nikiforov 
 [J. Combin. Theory Ser. B 97 (2007)] asserts that if $G$ is a $K_{r+1}$-free graph with $m$ edges, 
then $\lambda_1^2(G) + \lambda_2^2(G) \le (1-\frac{1}{r})2m$. 
Recently, Lin, Ning and Wu [Combin. Probab. Comput. 30 (2021)] 
confirmed the conjecture in the case $r=2$. 
Using this base case, they proved further that 
 $\lambda (G)\le \sqrt{m-1}$ for every non-bipartite triangle-free graph $G$, 
with equality  if and only if $m=5$ and $G=C_5$. 
Moreover, Zhai and Shu [Discrete Math. 345 (2022)] presented  an improvement 
by showing $\lambda (G) \le \beta (m)$, where $\beta(m)$ is the largest root of 
$Z(x):=x^3-x^2-(m-2)x+m-3$. 
The equality  in Zhai--Shu's result holds only if $m$ is odd and $G$ is obtained from the complete bipartite graph $K_{2,\frac{m-1}{2}}$ by subdividing 
exactly one edge. 
Motivated by this observation, Zhai and Shu proposed a question to 
find a sharp bound when $m$ is even. 
We shall solve this question by using a different method and 
 characterize three kinds of spectral extremal graphs 
over all triangle-free non-bipartite graphs with even size.  
Our proof technique is mainly based on applying Cauchy interlacing theorem of eigenvalues of a graph, and with the aid of a triangle counting lemma in terms of both eigenvalues and the size of a graph. 
 \end{abstract}

{{\bf Key words:}  Nosal theorem; 
non-bipartite graphs; 
Cauchy interlacing theorem. }

{{\bf 2010 Mathematics Subject Classification.}  05C50, 05C35.}

\section{Introduction}

 Let $G$ be a simple
 graph with vertex set $V(G)$ and edge set $E(G)$.  
 We usually write $n$ and $m$ for the number of vertices and edges,  
 respectively.  
One of the main problems of algebraic graph theory 
is to determine the combinatorial properties of a graph that are reflected from 
the  algebraic properties of  its associated matrices. 
Let $G$ be a simple graph on $n$ vertices. 
The \emph{adjacency matrix} of $G$ is defined as 
$A(G)=[a_{ij}]_{n \times n}$ where $a_{ij}=1$ if two vertices $v_i$ and $v_j$ are adjacent in $G$, and $a_{ij}=0$ otherwise.   
We say that $G$ has eigenvalues $\lambda_1 , \lambda_2,\ldots ,\lambda_n$ if these values are eigenvalues of 
the adjacency matrix $A(G)$. 
Let $\lambda (G)$ be the maximum  value in absolute 
 among all eigenvalues of  $G$, which is 
 known as the {\it spectral radius} of  $G$.

\subsection{The spectral extremal graph problems}

A graph $G$ is called {\it $F$-free} if it does not contain 
  an isomorphic copy of $F$ as a subgraph. 
Clearly, every bipartite graph is $C_{3}$-free. 
 The {\em Tur\'{a}n number} of a graph $F$ is the maximum number of edges  in an $n$-vertex $F$-free graph, and 
  it is usually  denoted by $\mathrm{ex}(n, F)$. 
  An $F$-free graph on $n$ vertices  with $\mathrm{ex}(n, F)$ edges is called an {\em extremal graph} for $F$. 
As is known to all, the Mantel theorem (see, e.g., \cite{Bollobas78}) asserts that 
if $G$ is a triangle-free graph on $n$ vertices, then 
\begin{equation}  \label{eq-man} 
 e(G) \le    \lfloor {n^2}/{4} \rfloor ,
 \end{equation}  
where the equality holds if and only if  $G$ 
is the balanced complete bipartite graph $K_{\lfloor \frac{n}{2}\rfloor, \lceil \frac{n}{2} \rceil }$.

  There are numerous extensions and generalizations 
  of Mantel's theorem; 
see  \cite{BT1981,Bon1983}.  Especially, 
  Tur\'{a}n  (see, e.g., \cite[pp. 294--301]{Bollobas78}) 
  extended Mantel's theorem by 
  showing that 
   if $G$ is a $K_{r+1}$-free graph on $n$ vertices with maximum number of edges, then $G$ is isomorphic to the graph $T_r(n)$, 
   where $T_r(n)$ denotes the complete $r$-partite graph whose part sizes are as equal as possible. 
Each vertex part of $T_r(n)$ 
has size either $\lfloor \frac{n}{r}\rfloor$ 
 or $\lceil \frac{n}{r}\rceil$. 
 The graph $T_r(n)$ is usually called Tur\'{a}n's graph. 
Five alternative proofs of Tur\'{a}n's theorem are selected into THE BOOK\footnote{Paul Erd\H{o}s liked to talk about THE BOOK, in which God maintains the perfect
proofs for mathematical theorems, and he also said that you
need not believe in God but you should believe in
THE BOOK.}  \cite[p. 285]{AZ2014}. 
Moreover, we refer the readers to the surveys  \cite{FS13, Sim13}.

Spectral extremal graph theory, with its connections and applications 
to numerous other fields, has enjoyed tremendous growth in the past few decades. 
There is a rich history on the study of bounding 
the  eigenvalues of a graph in terms of 
various parameters. For example, 
one can refer to \cite{BN2007jctb} for spectral radius and cliques, 
\cite{Niki2009jctb} for independence number and eigenvalues, 
\cite{TT2017,LN2021} for eigenvalues of outerplanar and planar graphs, 
\cite{CFTZ20, ZLX2022} for excluding friendship graph, 
and \cite{Tait2019, ZL2022jctb,HLF2023} for excluding minors.  
It is a traditional problem to bound the spectral radius  of a graph.   
Let $G$ be a graph on $n$ vertices with $m$ edges.   
It is natural to ask how large the spectral radius $\lambda (G)$ may be. 
A well-known result states that 
\begin{equation} \label{eqwell-known}
 \lambda (G)< \sqrt{2m}. 
  \end{equation}
This bound can be guaranteed by $\lambda(G)^2 < \sum_{i=1}^n \lambda_i^2 =\mathrm{Tr}(A^2(G)) =\sum_{i=1}^n d_i=2m$.  
We recommend the readers to \cite{Hong1988,HSF2001,Niki2002cpc} for more extensions.

It is also a popular problem 
to study the extremal structure for graphs with given number of edges. 
For example, it is not difficult to show that 
if $G$ has $m$ edges, then $G$ contains at most 
$\frac{\sqrt{2}}{3}m^{3/2}$ triangles; 
see, e.g., \cite[p. 304]{Bollobas78} and \cite{CC2021}. 
In addition, it is an instrumental topic to study 
the interplay between these two problems mentioned-above.  
More precisely, one can investigate 
 the largest eigenvalue of the adjacency matrix 
in a triangle-free graph with given number of edges\footnote{Note that when we consider the result on a graph with respect to 
the given number of edges, 
we shall ignore the possible isolated vertices if there are no confusions.}. 
Dating back to 1970,  Nosal \cite{Nosal1970} and Nikiforov \cite{Niki2002cpc,Niki2009jctb} independently obtained such a result.

 \begin{theorem}[Nosal \cite{Nosal1970}, Nikiforov \cite{Niki2002cpc,Niki2009jctb}] \label{thmnosal}
Let $G$ be a  graph  with $m$ edges. 
If $G$ is triangle-free, then 
\begin{equation}   \label{eq1}
\lambda (G)\le \sqrt{m} , 
\end{equation}
where the equality holds if and only if 
$G$ is a complete bipartite graph. 
\end{theorem}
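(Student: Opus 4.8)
The plan is to run the classical Perron-eigenvector argument for the inequality and then press it further to pin down the equality case. We may assume $G$ is connected and has at least one edge, since $\lambda(G)$ is attained on a single connected component and isolated vertices change neither side of \eqref{eq1}. Let $\mathbf{x}=(x_v)_{v\in V(G)}$ be a nonnegative eigenvector for $\lambda:=\lambda(G)$, so $\lambda x_v=\sum_{u\sim v}x_u$ for every $v$, and fix a vertex $z$ with $x_z=\max_v x_v$, which is positive since $\mathbf{x}$ is a Perron vector of a connected graph with an edge.

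The crux is to apply the eigenvalue equation twice at $z$:
\begin{align*}
\lambda^2 x_z&=\sum_{v\sim z}\lambda x_v=\sum_{v\sim z}\sum_{u\sim v}x_u=\sum_{u\in V(G)}\bigl|N(z)\cap N(u)\bigr|\,x_u\\
&\le x_z\sum_{u\in V(G)}\bigl|N(z)\cap N(u)\bigr|=x_z\sum_{v\sim z}d_v.
\end{align*}
Triangle-freeness enters when controlling $\sum_{v\sim z}d_v$: the set $N(z)$ is independent, so every edge incident with $N(z)$ joins it either to $z$ or to $V\setminus N[z]$, whence $\sum_{v\sim z}d_v=d_z+e\bigl(N(z),V\setminus N[z]\bigr)=m-e\bigl(V\setminus N[z]\bigr)\le m$. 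Cancelling $x_z>0$ gives $\lambda^2\le m$, which is \eqref{eq1}. One may also phrase the first inequality as $\lambda^2=\rho(A^2)\le\max_v\sum_{u\sim v}d_u$, using that $A^2$ is a nonnegative matrix whose $v$-th row sum is $\sum_{u\sim v}d_u$.

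For the equality case, suppose $\lambda^2=m$. Then every inequality above is tight, which forces (i) $e\bigl(V\setminus N[z]\bigr)=0$, and (ii) $x_u=x_z$ for every vertex $u$ that shares a neighbour with $z$. From (i), $G$ has no vertex at distance $\ge 3$ from $z$, since connectivity would otherwise produce an edge inside $V\setminus N[z]$; hence $V(G)=\{z\}\cup N(z)\cup N_2(z)$, where $N_2(z)$ is the set of vertices at distance exactly $2$ from $z$. Moreover $N(z)$ is independent (triangle-freeness) and $\{z\}\cup N_2(z)$ is independent (by (i) and the definition of $N_2(z)$), so $G$ is bipartite with these two parts. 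From (ii), $x_u=x_z$ for all $u\in N_2(z)$. Finally, for any $u\in N_2(z)$ we have $N(u)\subseteq N(z)$, so the eigenvalue equation at $u$ together with $x_u=x_z$ gives $\sum_{v\in N(u)}x_v=\lambda x_z=\sum_{v\in N(z)}x_v$; since all weights are positive, $N(u)=N(z)$. Therefore every vertex of $\{z\}\cup N_2(z)$ is adjacent to every vertex of $N(z)$, i.e. $G$ is a complete bipartite graph. The reverse implication is immediate since $\lambda(K_{s,t})=\sqrt{st}$ and $e(K_{s,t})=st$.

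The inequality is routine once the two-step Rayleigh computation is in hand; the step that needs real care is the equality analysis --- specifically, ruling out vertices far from $z$ via $e(V\setminus N[z])=0$, and then upgrading ``$G$ is bipartite'' to ``$G$ is complete bipartite'' through the second application of the eigenvalue equation. The only bookkeeping subtlety is the reduction to a connected graph with no isolated vertices, which is precisely why the equality statement is to be read up to added isolated vertices, consistent with the convention noted in the paper.
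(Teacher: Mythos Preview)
Your proof is correct. The Perron-vector two-step argument at a vertex of maximum weight, combined with the edge count $\sum_{v\sim z}d_v=m-e(V\setminus N[z])$ forced by triangle-freeness, gives the inequality cleanly, and your equality analysis is careful and complete: the key point that $N(u)\subseteq N(z)$ together with $x_u=x_z$ forces $N(u)=N(z)$ via positivity of the Perron vector is exactly what is needed to upgrade ``bipartite'' to ``complete bipartite''.

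As for comparison with the paper: the paper does not prove Theorem~\ref{thmnosal} at all --- it is quoted as a classical result of Nosal and Nikiforov and serves only as background for the paper's main contributions (Theorems~\ref{thmZS2022} and~\ref{thm-main}). Your argument is essentially the standard one found in Nikiforov's work; the only remark worth making is that the reduction to the connected case deserves one more sentence in the equality analysis (if $G$ had two components with edges, then $\lambda(G)=\max_i\lambda(G_i)\le\max_i\sqrt{m_i}<\sqrt{m}$ strictly), but you acknowledge this implicitly in your final paragraph and it is consistent with the paper's stated convention of ignoring isolated vertices.
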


Mantel's theorem in (\ref{eq-man}) 
can be derived from  (\ref{eq1}).  
Indeed, using Rayleigh's inequality, we have 
$\frac{2m}{n}\le \lambda (G)\le  \sqrt{m}$, 
which yields $ m \le \lfloor {n^2}/{4} \rfloor$.  
Thus, Theorem \ref{thmnosal} could be viewed as  
a spectral version of Mantel's theorem. 
Moreover, Theorem \ref{thmnosal} implies a result of Lov\'{a}sz and Pelik\'{a}n \cite{LP1973}, 
which asserts that if $G$ is a tree on $n$ vertices, then $\lambda (G)\le \sqrt{n-1}$, 
with equality  if and only if $G=K_{1,n-1}$.

Inequality (\ref{eq1}) 
impulsed the great interests of studying the maximum spectral radius 
for $F$-free graphs with given number of edges,  
see \cite{Niki2002cpc,Niki2009jctb} for  $K_{r+1}$-free graphs, 
\cite{Niki2009laa,ZS2022dm,Wang2022DM} for  $C_4$-free graphs,  
\cite{ZLS2021} for  $K_{2,r+1}$-free graphs,  
 \cite{ZLS2021,MLH2022} for $C_5$-free or $C_6$-free graphs, 
\cite{LLL2022} for $C_7$-free graphs, 
\cite{LSW2022,FYH2022,LW2022} for $C_4^{\triangle}$-free or $C_5^{\triangle}$-free graphs, where $C_k^{\triangle}$ 
is a graph on $k+1$ vertices obtained from $C_k$ 
and $C_3$ by sharing a common edge; 
see \cite{Niki2021} for  $B_k$-free graphs, 
where $B_k$ denotes the book graph consisting of $k$ triangles 
sharing a common edge, 
\cite{2022LLP} for $F_2$-free graphs with given number of edges, 
where $F_2$ 
is the friendship graph  consisting of two triangles intersecting in 
a common vertex, \cite{NZ2021,NZ2021b} for counting the number of $C_3$ and $C_4$. 
We refer the readers to the surveys \cite{NikifSurvey, LFL2022} 
and references therein.

\medskip 
 In particular, Bollob\'{a}s and Nikiforov \cite{BN2007jctb}  
posed the following nice conjecture.

\begin{conjecture}[Bollob\'{a}s--Nikiforov, 2007] \label{conj-BN}
Let $G$ be a $K_{r+1}$-free graph of order at least $r+1$ 
with $m$ edges. Then 
\begin{equation*}
{  \lambda_1^2(G)+ \lambda_2^2(G) 
\le 2m\Bigl( 1-\frac{1}{r}\Bigr) }. 
\end{equation*}
\end{conjecture}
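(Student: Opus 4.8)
The plan is to settle Conjecture~\ref{conj-BN} in the case $r=2$ first --- this is the ``base case'' on which the rest of the paper rests, and the general case seems to require an essentially new idea --- recording along the way where more than $K_{r+1}$-freeness gets used. After routine reductions one may assume $G$ is connected: if $G$ is disconnected then $\lambda_2(G)$ is either the largest eigenvalue of a second component, in which case Nikiforov's spectral Tur\'an inequality $\lambda_1^2(H)\le 2e(H)(1-\tfrac1r)$ --- whose $r=2$ instance is Theorem~\ref{thmnosal} --- applied to each component already gives more than enough, or it is the second eigenvalue of the spectrally largest component, in which case one recurses on that component. One may further assume $\lambda_2(G)>0$: otherwise $G$ has at most one positive eigenvalue and hence is a complete multipartite graph (together with isolated vertices), which being $K_{r+1}$-free has at most $r$ parts; the order hypothesis then rules out $G$ being a clique $K_s$ with $s\le r$, so $\lambda_2(G)=0$ and the claim reduces to $\lambda_1^2(G)\le 2m(1-\tfrac1r)$, which holds.

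For $r=2$ I would split on whether $G$ is bipartite. If $G$ is bipartite its spectrum is symmetric about $0$, so $\lambda_n^2=\lambda_1^2$ and $\lambda_{n-1}^2=\lambda_2^2$, whence $2\bigl(\lambda_1^2(G)+\lambda_2^2(G)\bigr)\le\sum_{i=1}^n\lambda_i^2(G)=2m$, with equality exactly when $G$ has at most four nonzero eigenvalues. If $G$ is non-bipartite --- the real content --- the symmetry only almost holds, since $\lambda_n(G)>-\lambda_1(G)$ by Perron--Frobenius, so the pairing above leaks and the leak must be quantified. One natural route is to combine the trace identities available for triangle-free graphs ($\sum_i\lambda_i(G)=0$, $\sum_i\lambda_i^2(G)=2m$, $\sum_i\lambda_i^3(G)=0$) with the Nosal bound $\lambda_1(G)\le\sqrt m$ (which is strict here) and with a bound on $\lambda_2(G)$ obtained by Cauchy interlacing against a well-chosen vertex-deleted subgraph. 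A second route is to use the $A$-invariant plane $U=\mathrm{span}\{\mathbf{x}_1,\mathbf{x}_2\}$ spanned by the first two eigenvectors, which yields the clean identity
\[
\lambda_1^2(G)+\lambda_2^2(G)=\mathrm{tr}\bigl(A^2(G)\,P_U\bigr)=\sum_{w}d(w)(P_U)_{ww}+\sum_{w\ne w'}\bigl|N(w)\cap N(w')\bigr|(P_U)_{ww'},
\]
where the codegree coefficients vanish on edges because $G$ is triangle-free; one then estimates the first sum via $\mathrm{tr}(P_U)=2$ and $(P_U)_{ww}\le 1$, and the second by double counting paths of length two.

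The main obstacle --- and the reason Conjecture~\ref{conj-BN} remains open for $r\ge 3$ --- is precisely the control of the codegree contribution in that last step. When $G$ is triangle-free the codegree of every edge vanishes, and that is exactly the leverage that lets one pass from the trivial bound $\lambda_1^2(G)+\lambda_2^2(G)\le\sum_i\lambda_i^2(G)=2m$ down to the sharp value $2m(1-\tfrac12)=m$; this, in essence, is the theorem of Lin, Ning and Wu. For $r\ge 3$ there is no spectral symmetry to exploit, since the Tur\'an graphs $T_r(n)$, which are the expected extremal examples, are far from bipartite; moreover a neighbourhood $N(v)$ is merely $K_r$-free and the codegree of an edge is merely $K_{r-1}$-free rather than empty, so the codegree terms in the identity above are no longer forced to be negligible and the crude estimate again loses a constant factor. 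Turning the slack coming from the $K_{r-1}$-free structure of edge-neighbourhoods into exactly the factor $1-\tfrac1r$ --- presumably through an induction on $r$ that feeds this structure back into the spectral estimate, or through a variational argument on the plane $U$ --- is the essential missing step where a full proof would have to concentrate its effort.
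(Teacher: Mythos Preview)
The statement you address is recorded in the paper as a \emph{Conjecture}, not a theorem; the paper gives no proof of it and merely cites Lin, Ning and Wu \cite{LNW2021} for the case $r=2$, noting that their argument proceeds via majorization inequalities. There is thus no ``paper's own proof'' to compare against, and your closing paragraph is correct that $r\ge 3$ remains genuinely open.

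Even restricted to $r=2$, your write-up is a plan rather than a proof. The bipartite sub-case and the preliminary reductions are fine, but for the non-bipartite sub-case neither of your two routes is carried out. Route one is a list of ingredients without an argument. Route two, via the projection $P_U$ onto the span of the first two eigenvectors, starts from a valid identity, but the decisive estimate is missing: triangle-freeness forces $|N(w)\cap N(w')|=0$ only when $ww'$ is an \emph{edge}, whereas for non-adjacent pairs the codegree can be large and $(P_U)_{ww'}$ has no sign control, so ``double counting paths of length two'' does not bound that sum; already for the star $K_{1,n-1}$ (a tight instance, with $\lambda_1^2+\lambda_2^2=m$) the off-diagonal contribution equals $(n-4)/2$ and is not negligible. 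Nor is the diagonal part $\sum_w d(w)(P_U)_{ww}$ bounded by $m$ from $\mathrm{tr}(P_U)=2$ and $(P_U)_{ww}\le 1$ alone. The published $r=2$ proof follows neither of your routes --- it uses a majorization comparison of the squared-eigenvalue sequence --- and that is the idea you would have to supply to make the argument self-contained.
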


Recently, Lin, Ning and Wu \cite{LNW2021} 
confirmed the base case $r=2$; see, e.g., \cite{Niki2021,LSY2022} for related results. 
Furthermore, the base case leads to 
Theorem \ref{thm-LNW} in next section.

\subsection{The non-bipartite triangle-free  graphs} 

The extremal graphs determined in 
Theorem \ref{thmnosal} are the complete bipartite graphs. 
Excepting the largest extremal graphs,
 the second largest extremal graphs were extensively studied 
 over the past years.  
 In this paper, we will pay attention mainly to the 
spectral extremal  problems 
for non-bipartite triangle-free graphs with given number of edges. 
Using the inequalities from majorization theory, 
Lin, Ning and Wu \cite{LNW2021} confirmed the triangle case
 in Conjecture \ref{conj-BN}, and then they proved the following result.

\begin{theorem}[Lin--Ning--Wu, 2021] \label{thm-LNW}
Let $G$ be a triangle-free graph with $m$ edges.  
If $G$  is non-bipartite, then 
\[  \lambda (G)\le \sqrt{m-1}, \] 
where the equality holds if and only if $m=5$ and $G=C_5$. 
\end{theorem}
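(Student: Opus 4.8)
The plan is to first reduce to the case that $G$ is connected, then prove the inequality with a Perron-eigenvector argument that exploits triangle-freeness through $A^2$, and finally run a structural analysis of the equality case. Note that a non-bipartite triangle-free graph contains an odd cycle of length at least $5$, hence has at least $5$ edges, so $m\ge5$. If no component of $G$ attaining $\lambda(G)$ is non-bipartite, then some bipartite component $H$ satisfies $\lambda(H)=\lambda(G)$, and since the non-bipartite part of $G$ carries at least $5$ edges we have $e(H)\le m-5$; then Theorem~\ref{thmnosal} gives $\lambda(G)=\lambda(H)\le\sqrt{e(H)}\le\sqrt{m-5}<\sqrt{m-1}$, so the statement holds with strict inequality. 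Otherwise some non-bipartite component attains $\lambda(G)$, and we may replace $G$ by it; this does not increase $m$ and strictly decreases it unless $G$ was already connected, so it suffices to treat connected $G$ and to observe that equality forces $G$ to be connected.

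Assume now $G$ is connected, triangle-free and non-bipartite; write $\lambda=\lambda(G)$ and let $\mathbf{x}\ge\mathbf{0}$ be a Perron eigenvector scaled so that $\max_v x_v=1$, attained at a vertex $u$. From $A^2\mathbf{x}=\lambda^2\mathbf{x}$, the identities $(A^2)_{uu}=d_u$ and $(A^2)_{uv}=|N(u)\cap N(v)|$, and the fact that triangle-freeness forces $(A^2)_{uv}=0$ for every $v\in N(u)$, we obtain
\[
\lambda^2=\lambda^2x_u=d_u+\sum_{v\notin N[u]}|N(u)\cap N(v)|\,x_v\le d_u+\sum_{v\notin N[u]}|N(u)\cap N(v)|.
\]
A short double count, using that $N(u)$ is independent (again by triangle-freeness), yields $\sum_{v\notin N[u]}|N(u)\cap N(v)|=\sum_{w\in N(u)}d_w-d_u$, hence
\[
\lambda^2\le\sum_{w\in N(u)}d_w=e\bigl(N(u),\,V(G)\setminus N(u)\bigr)=m-e\bigl(G[V(G)\setminus N(u)]\bigr).
\]
If $e(G[V(G)\setminus N(u)])=0$ then both $N(u)$ and its complement are independent, so $G$ is bipartite, a contradiction; therefore $e(G[V(G)\setminus N(u)])\ge1$ and $\lambda^2\le m-1$. (Alternatively one could start from the base case $r=2$ of Conjecture~\ref{conj-BN}, that is, $\lambda_1^2+\lambda_2^2\le m$, together with the observation that an induced odd cycle of length at least $7$ already forces $\lambda_2>1$ by interlacing; but that approach still requires a separate treatment of odd-girth-$5$ graphs, so the route above seems cleaner.)

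For the equality characterisation, suppose $\lambda^2=m-1$. Then $G$ is connected and, retracing the two inequalities, (i) $G[V(G)\setminus N(u)]$ is a single edge $ab$ — with $a,b\ne u$, since every edge incident to $u$ goes to $N(u)$ — and (ii) $x_v=1$ for every vertex $v$ with $d(u,v)=2$. Plugging (ii) into the eigenvalue equation at each $w\in N(u)$ gives $x_w=d_w/\lambda$, and feeding this back shows that every vertex at distance $2$ from $u$ other than $a,b$ has neighbourhood exactly $N(u)$; the eigenvalue equations at $a$ and at $b$ then reduce to a handful of numerical identities among $|N(u)|$, $|N(a)\cap N(u)|$, $|N(b)\cap N(u)|$ and the number $t$ of these ``$u$-twins''. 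Splitting on whether both $a,b$ lie at distance $2$ from $u$ or one of them is a pendant vertex at distance $3$ (it cannot be farther, since $G[V(G)\setminus N(u)]$ has only the edge $ab$): the pendant case makes $G$ bipartite, a contradiction, while the distance-$2$ case forces $t=0$ and $|N(u)|=2$, leaving only $G=C_5$. Conversely $C_5$ has $m=5$ and $\lambda(C_5)=2=\sqrt{m-1}$, so equality holds exactly there.

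The step I expect to be the real obstacle is this last one. The bound $\lambda^2\le m-1$ is robust, but a priori the equality conditions leave a two-parameter family of candidates (roughly, a complete-bipartite-type graph with one extra edge attached), and eliminating all of them requires carefully combining the forced Perron coordinates $x_w=d_w/\lambda$ with the edge identity $\sum_{w\in N(u)}d_w=m-1$ to collapse the parameters down to the single graph $C_5$. Everything else is routine.
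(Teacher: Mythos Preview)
Your approach is correct and genuinely different from the one the paper attributes to Lin--Ning--Wu. Their proof first establishes the base case $\lambda_1^2+\lambda_2^2\le m$ of Conjecture~\ref{conj-BN} (via majorization), deduces $\lambda_2<1$ whenever a non-bipartite triangle-free $G$ satisfies $\lambda_1\ge\sqrt{m-1}$, and then uses Cauchy interlacing on that second-eigenvalue bound to pin down the structure. You bypass the two-eigenvalue inequality entirely: the Perron-vector identity $\lambda^2=\sum_{w\in N(u)}d_w - \sum_{v\notin N[u]}|N(u)\cap N(v)|(1-x_v)$ together with $\sum_{w\in N(u)}d_w=m-e(G[V\setminus N(u)])$ gives $\lambda^2\le m-1$ directly, since non-bipartiteness forces at least one edge inside $V\setminus N(u)$. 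This is more elementary and self-contained; what the Lin--Ning--Wu route buys is the stronger inequality $\lambda_1^2+\lambda_2^2\le m$ itself, which is of independent interest and feeds into the later results of the paper.

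Regarding the step you flagged as the obstacle: it does go through cleanly. With your notation, set $k=|N(u)|$, $\alpha=|N(a)\cap N(u)|=|N(b)\cap N(u)|$ (the eigen-equations at $a$ and $b$ force these to be equal, and triangle-freeness makes them disjoint, so $k\ge 2\alpha\ge 2$), and $t$ the number of $u$-twins. The eigen-equations collapse to $\lambda=t(k-\alpha)+k$ and $\lambda^2=k(t+1)+2\alpha$. Since $k-\alpha\ge k/2$ one gets $\lambda\ge k(t+2)/2$, while $\lambda^2\le k(t+2)$; combining these gives $k(t+2)\le 4$, hence $k=2$, $t=0$, $\alpha=1$, and $G=C_5$. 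So the two-parameter family you were worried about collapses in one line once the equations are written down.
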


The upper bound in Theorem \ref{thm-LNW} 
is not sharp for $m>5$. Motivated by this observation, 
 Zhai and Shu  \cite{ZS2022dm} provided a further improvement 
on Theorem \ref{thm-LNW}. 
For every integer $m\ge 3$, 
we denote by $\beta(m)$ the largest root of 
\begin{equation} \label{eq-Zx}
  Z(x):=x^3-x^2-(m-2)x+m-3. 
  \end{equation} 
If $m$ is odd, 
then we define $SK_{2,\frac{m-1}{2}} $ as the graph obtained 
from the complete bipartite graph $K_{2,\frac{m-1}{2}}$ by subdividing an edge; 
see Figure \ref{Fig-Zhai-Shu} for two drawings. 
Clearly, $SK_{2,\frac{m-1}{2}}$ is a triangle-free graph  with  
$m$ edges, and it is non-bipartite as it contains a copy of $C_5$.  
By computations, we know that  $\beta(m)$ is
 the spectral radius of  $SK_{2,\frac{m-1}{2}}$.

 \begin{figure}[H]
\centering 
\includegraphics[scale=0.8]{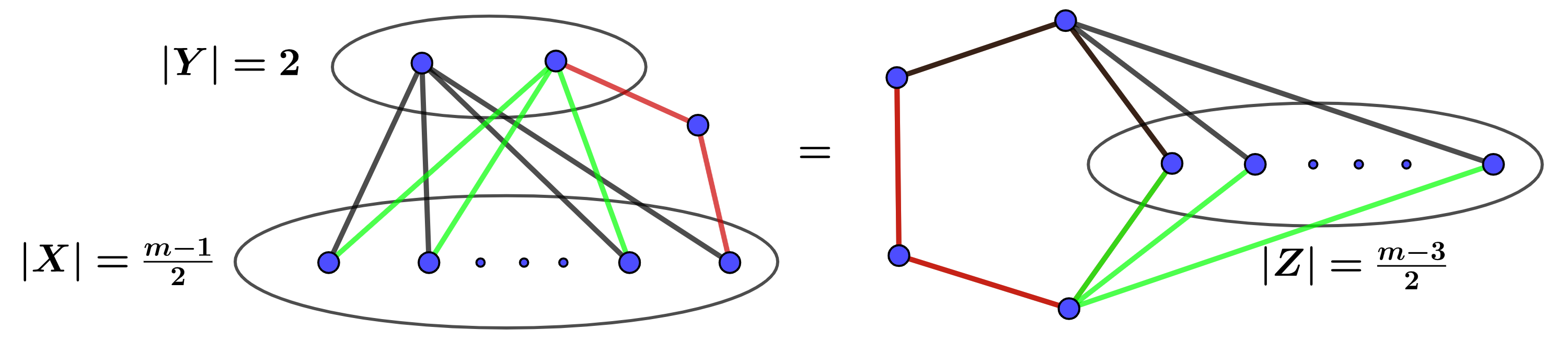}  
\caption{$m$ is odd and the graph $SK_{2,\frac{m-1}{2}} $. } \label{Fig-Zhai-Shu} 
\end{figure}

The improvement of Zhai and Shu \cite{ZS2022dm} on Theorem \ref{thm-LNW} 
can be stated as below. 

\begin{theorem}[Zhai--Shu, 2022] \label{thmZS2022}
Let $G$ be a  graph of size $m$. If $G$ is triangle-free and non-bipartite, then 
\[ \lambda (G) \le \beta (m), \]    
with equality  if and only if 
$G=SK_{2,\frac{m-1}{2}}$. 
\end{theorem}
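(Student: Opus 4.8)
The plan is to prove the bound $\lambda(G) \le \beta(m)$ for triangle-free non-bipartite graphs of size $m$ by a combination of Cauchy interlacing and a careful analysis of the structure near a shortest odd cycle. First I would note that since $G$ is non-bipartite and triangle-free, its odd girth is at least $5$, so $G$ contains an induced $C_5$, say on vertices $v_1,\dots,v_5$. Let $\lambda = \lambda(G)$ and let $\mathbf{x}$ be a Perron eigenvector. The natural first reduction is to assume $G$ is connected (otherwise work on the component achieving the spectral radius) and that $\lambda > \sqrt{m-1}$, since by Theorem \ref{thm-LNW} the only case with $\lambda = \sqrt{m-1}$ is $C_5$, for which one checks directly that $\sqrt{4} = 2 < \beta(5)$ so the bound holds with room to spare. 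Thus we may assume $\lambda^2 > m-1$, i.e. $m < \lambda^2 + 1$, and we must upgrade this to $m \le$ (the quantity forced by $Z(\beta(m))=0$), eventually pinning down $G = SK_{2,(m-1)/2}$ in the equality case.

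**Key steps.** The heart of the argument should be an \emph{edge-counting inequality} in terms of $\lambda$. Using the eigenvalue–eigenvector equation $\lambda x_u = \sum_{v \sim u} x_v$ and the triangle-free condition (so that for adjacent $u,v$ the neighborhoods $N(u)\setminus\{v\}$ and $N(v)\setminus\{u\}$ are disjoint), one gets for every edge $uv$ that $\lambda(x_u + x_v) \le \sum_{w} x_w$ where $w$ ranges over $N(u) \cup N(v)$, and hence a Nosal-type bound; refining this with the walk/triangle-counting lemma advertised in the abstract (counting closed walks of length $3$, which is $0$, against $\sum \lambda_i^3$) gives a relation tying $m$, $\lambda$, and the eigenvalues together. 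The second main step is to exploit the induced $C_5$ via Cauchy interlacing: deleting the five $C_5$-vertices (or a suitable subset) leaves a bipartite triangle-free graph $H$ on the remaining structure, for which Nosal gives $\lambda(H) \le \sqrt{e(H)}$, while interlacing gives $\lambda_2(G) \ge \lambda(H')$ for appropriate vertex-deleted subgraphs $H'$ containing an edge, forcing $\lambda_2(G)^2$ to be bounded below. Combined with the $r=2$ case of the Bollobás–Nikiforov conjecture (Theorem \ref{thm-LNW}'s engine), $\lambda_1^2 + \lambda_2^2 \le m$, this squeezes $\lambda_1^2 \le m - \lambda_2^2 \le m - (\text{positive constant})$, and one then has to show this forces $\lambda_1 \le \beta(m)$ by comparing with the defining cubic $Z(x)$.

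**The equality analysis and the main obstacle.** To finish, one analyzes when all the inequalities are tight simultaneously: equality in the Nosal step for $H$ forces $H$ to be a complete bipartite graph, equality in interlacing forces the eigenvector to vanish on deleted vertices or the partition to be equitable, and tracing these back should force $G$ to consist of exactly one subdivided edge hanging off a $K_{2,(m-1)/2}$ — in particular $m$ must be odd. The quantitative comparison "$\lambda_1^2 \le m - c \Rightarrow \lambda_1 \le \beta(m)$" needs the fact that $\beta(m) = \sqrt{m - 1 + o(1)}$ more precisely: since $Z(x) = x^3 - x^2 - (m-2)x + (m-3) = (x-1)(x^2 - (m-2)) - 1$, the root $\beta(m)$ satisfies $\beta(m)^2 = (m-2) + \frac{1}{\beta(m)-1}$, so the target inequality is genuinely a statement about the \emph{second-order} term, not just $\lambda_1^2 \le m - 1$. \textbf{I expect the main obstacle to be precisely this second-order bookkeeping}: the gap between $\sqrt{m-1}$ and $\beta(m)$ is only $\Theta(m^{-3/2})$, so every inequality in the chain must be controlled to that precision, which means the triangle-counting lemma and the interlacing step cannot be applied crudely — one needs the local structure around the $C_5$ (how many common neighbors consecutive pairs have, the degrees of the $C_5$-vertices) to be analyzed almost exactly, and ruling out all the near-extremal configurations other than $SK_{2,(m-1)/2}$ (and, when $m$ is even, showing the bound is still valid though not attained) is where the real work lies.
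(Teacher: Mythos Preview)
Theorem~\ref{thmZS2022} is not proved in this paper at all: it is quoted from Zhai and Shu \cite{ZS2022dm} and used as an input (specifically to obtain the upper bound $\lambda(G)<\beta(m)<\sqrt{m-1.85}$ in~(\ref{eq-1.85})). So there is no ``paper's own proof'' to compare against. The paper does, however, comment on exactly the strategy you propose, and explains why it fails; see the Proof Overview in Section~\ref{sec2}.

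Your plan has a genuine gap, and the paper names it explicitly. You want to combine the $r=2$ case of Bollob\'{a}s--Nikiforov, $\lambda_1^2+\lambda_2^2\le m$, with a lower bound on $\lambda_2$ coming from interlacing with the induced $C_5$, and deduce $\lambda_1\le\beta(m)$. But interlacing with $C_5$ only gives $\lambda_2(G)\ge\lambda_2(C_5)=2\cos(2\pi/5)\approx 0.618$, hence $\lambda_1^2\le m-0.382$, whereas $\beta(m)^2=(m-2)+\frac{1}{\beta(m)-1}\to m-2$. You are short by almost $2$ in $\lambda_1^2$, and no crude refinement closes that: as the paper says, from $\lambda_1\ge\beta(m)$ and (\ref{eq-BN-tri}) one gets only $\lambda_2<\sqrt{2}$, which is ``invalid for our purpose.'' This is why Zhai--Shu's original proof abandons (\ref{eq-BN-tri}) entirely and works with Perron components instead, and why the alternative proof in \cite{LP2022oddcycle} uses interlacing together with the full triangle-counting identity (Lemma~\ref{lem21}) for \emph{many} eigenvalues, not just $\lambda_1$ and $\lambda_2$.

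Two smaller errors reinforce the point. First, your factorization $Z(x)=(x-1)(x^2-(m-2))-1$ is correct, but it gives $\beta(m)^2=m-2+o(1)$, not $m-1+o(1)$; consequently the gap $\sqrt{m-1}-\beta(m)$ is $\Theta(m^{-1/2})$, not $\Theta(m^{-3/2})$. Second, for $m=5$ one has $Z(2)=8-4-6+2=0$, so $\beta(5)=2=\lambda(C_5)$ exactly (indeed $C_5=SK_{2,2}$ is the extremal graph), not $2<\beta(5)$. Both slips point the same way: the target $\beta(m)$ sits near $\sqrt{m-2}$, and the two-eigenvalue inequality $\lambda_1^2+\lambda_2^2\le m$ simply cannot reach that far.
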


Indeed, the result of Zhai and Shu improved Theorem \ref{thm-LNW}. 
It was proved in \cite[Lemma 2.2]{ZS2022dm} that   for every $m\ge 6$, 
 \begin{equation}  \label{eq-beta-ZS}
   \sqrt{m-2} < \beta(m) < \sqrt{m-1}. 
   \end{equation}   
The original proof of Zhai and Shu \cite{ZS2022dm} for Theorem \ref{thmZS2022} is technical and based on the use 
of the Perron components. Subsequently, Li and Peng \cite{LP2022oddcycle} 
provided an alternative proof by applying Cauchy interlacing 
theorem. We remark  that $\lim_{m\to \infty} 
(\beta(m)- \sqrt{m-2}) =0$.  
In addition, Wang \cite{Wang2022DM} improved Theorem \ref{thmZS2022} slightly 
by determining all the graphs with size $m$ 
whenever it is a non-bipartite triangle-free graph satisfying  $\lambda (G) \ge \sqrt{m-2}$.

\subsection{A question of Zhai and Shu}

The upper bound in Theorem \ref{thmZS2022} 
could be attained only if $m$ is odd, since the extremal graph $SK_{2,\frac{m-1}{2}}$ is well-defined only in this case. 
Thus, it is interesting to determine the spectral extremal graph 
when $m$ is even.  
Zhai and Shu in \cite[Question 2.1]{ZS2022dm} 
proposed the following question formally. 

\begin{question}[Zhai--Shu \cite{ZS2022dm}] \label{ques-ZS}
For even $m$, what is the extremal graph attaining the maximum 
spectral radius over all triangle-free non-bipartite graphs with $m$ 
edges? 
\end{question}

In this paper, we shall solve this question  
and determine the spectral extremal graphs. 
Although Question \ref{ques-ZS} seems to be another side 
of Theorem \ref{thmZS2022}, 
we would like to point out that the even case is actually more difficult and different, 
and the original method is ineffective in this case.

\begin{definition}[Spectral extremal graphs]  
Suppose that $m\in 2\mathbb{N}^*$. 
Let $L_m$ be the graph obtained from 
the subdivision $SK_{2,\frac{m-2}{2}}$ by  hanging an edge on a vertex with the maximum degree. 
If $\frac{m-3}{3}$ is a positive integer, 
then we define $Y_m$ as the graph obtained from $C_5$ by blowing up 
a vertex to an independent set $I_{\frac{m-3}{3}}$ 
on $\frac{m-3}{3}$ vertices, then adding a new vertex, and joining this vertex to all vertices of $I_{\frac{m-3}{3}}$. 
If $\frac{m-4}{3}$ is a positive integer, 
then we write $T_m$ for the graph obtained from $C_5$ 
by blowing up two adjacent vertices to independent sets $I_{\frac{m-4}{3}}$ 
and $I_2$, respectively, where $I_{\frac{m-4}{3}}$ and $I_2$ form a complete bipartite graph; 
see Figure \ref{fig-LFP}. 
\end{definition}

 \begin{figure}[H]
\centering 
\includegraphics[scale=0.8]{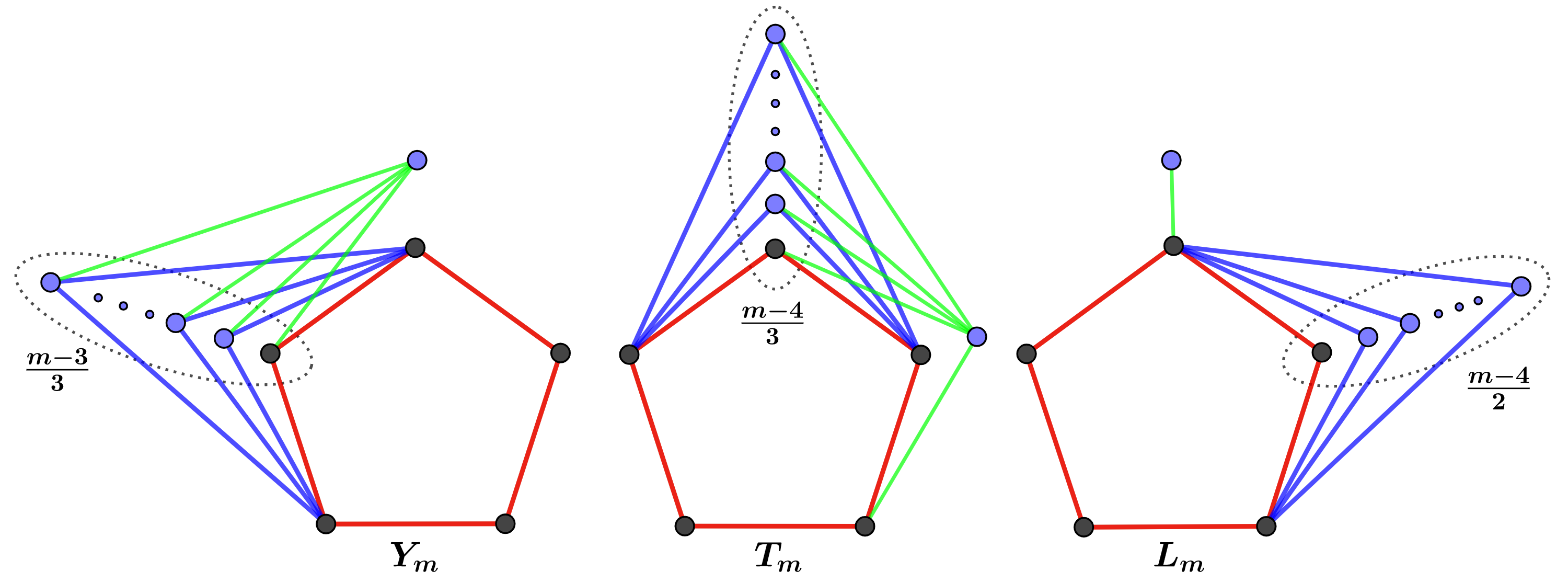}  
\caption{Extremal graphs in Theorem \ref{thm-main}. }
 \label{fig-LFP} 
\end{figure}

\begin{theorem}[Main result]  \label{thm-main} 
Let $m$ be even and $m\ge 4.7\times 10^5$. 
Suppose that $G$ is a triangle-free graph with $m$ edges 
and $G$ is non-bipartite.  \\ 
(a) If $m= 3t$ for  some $t\in \mathbb{N}^*$, then 
$ \lambda (G) \le \lambda (Y_m)$,   
with equality if and only if $G=Y_m$.  \\ 
(b) If $m= 3t +1$ for some $t\in \mathbb{N}^*$, then 
$  \lambda (G) \le \lambda (T_m)$,      
with equality  if and only if $G=T_m$.  \\  
(c) If $m= 3t +2$ for some $t\in \mathbb{N}^*$, then 
$ \lambda (G) \le \lambda (L_m)$,      
with equality if and only if $G=L_m$.  
\end{theorem}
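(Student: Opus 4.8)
The plan is to adapt the interlacing-based strategy pioneered by Li and Peng \cite{LP2022oddcycle} for Theorem \ref{thmZS2022} to the three parity classes of the even case. First I would set $\lambda := \lambda(G)$ and, arguing by contradiction, assume $G$ is a triangle-free non-bipartite graph of even size $m$ with $\lambda$ at least as large as the spectral radius of the relevant candidate extremal graph (namely $Y_m$, $T_m$, or $L_m$). A preliminary reduction shows one may assume $G$ is connected and has no isolated vertices, and that the minimum degree is not too small: any vertex of degree one can be deleted (which cannot increase $\lambda$ by more than a controlled amount and preserves triangle-freeness), so after pruning we either reach a smaller counterexample or a 2-connected core. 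The key numerical input is the estimate on $\beta(m)$ from \eqref{eq-beta-ZS} together with an analogous computation of $\lambda(Y_m)$, $\lambda(T_m)$, $\lambda(L_m)$: each of these is the largest root of an explicit low-degree polynomial (obtained from the quotient matrix of the natural equitable partition of the candidate graph), and one checks $\sqrt{m-3}<\lambda(Y_m),\lambda(T_m),\lambda(L_m)<\sqrt{m-2}$ for $m$ large, so the working hypothesis is $\lambda^2 \ge m-3$ roughly.

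Next I would invoke the triangle counting lemma alluded to in the abstract — a bound of the shape $t(G) \ge \tfrac{1}{?}\,\lambda\bigl(\lambda^2 - m\bigr)$ or, more precisely in the triangle-free regime, an inequality forcing structural rigidity when $\lambda^2$ is close to $m$. Since $G$ is triangle-free, $t(G)=0$, and combined with $\lambda^2 \ge m-3$ this pins down the neighborhood structure of a Perron vertex $u$: writing $x$ for the Perron eigenvector normalized at $u$, the eigenvalue equation $\lambda^2 x_u = \sum_{v\sim u} \sum_{w\sim v} x_w$ and the triangle-free condition (so $N(u)$ is independent and every common neighbor contributes) give $\lambda^2 x_u = d(u) x_u + \sum_{w \in N_2(u)} (\text{codegree with } u)\, x_w$. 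The closeness of $\lambda^2$ to $m$ forces almost all edges to be incident to $N(u)$ and forces the second neighborhood to be small and concentrated. This is where Cauchy interlacing enters: I would locate in $G$ an induced subgraph $H$ on a constant number of vertices (built from $u$, a second high-weight vertex, and their neighborhoods) whose structure is constrained enough that $\lambda(H)$ can be computed or bounded, and interlacing gives $\lambda(G) \le$ something only if $H$ is one of finitely many graphs — which, after reattaching the pendant pieces consistently with triangle-freeness, non-bipartiteness (a $C_5$ must survive), and the prescribed size $m$, leaves exactly the claimed extremal graph in each residue class.

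The main obstacle, I expect, is the case analysis needed to show that the "reattachment" of the low-weight part of $G$ onto the identified core is forced: triangle-freeness plus the exact edge count $m$ must be leveraged to rule out all configurations other than $Y_m$, $T_m$, $L_m$, and here the three residues $m \equiv 0,1,2 \pmod 3$ genuinely diverge because the optimal way to distribute $m$ edges among blown-up independent sets of a $C_5$ depends on that residue — this is the combinatorial heart and the reason the even case splits into three subcases rather than one. A secondary technical nuisance is making the "$\lambda$ cannot increase under pruning degree-one vertices" step quantitatively tight enough that the induction closes for all even $m \ge 258$ rather than only asymptotically; this will require the explicit polynomial bounds on $\lambda(Y_m), \lambda(T_m), \lambda(L_m)$ and a monotonicity check, and the somewhat large threshold $258$ presumably comes from where those polynomial inequalities can be verified cleanly. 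Throughout, the non-bipartiteness is used only at the very end to exclude the complete bipartite graph and its near-relatives and to certify that the surviving extremal graph really does contain an odd cycle.
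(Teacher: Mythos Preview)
Your proposal has a genuine gap and diverges from the paper's actual strategy in a way that matters. First, the pruning step is wrong: you cannot delete degree-one vertices to reach a 2-connected core, because the extremal graph $L_m$ itself carries a pendant edge, so any such reduction destroys exactly the object you are trying to isolate (and changes the parity of $m$, sending you into the odd-$m$ regime). Second, your plan centers on the Perron-vertex local analysis --- the eigenvalue equation at a vertex of maximum Perron entry together with neighborhood counting --- which is essentially the Zhai--Shu Perron-components method; the paper explicitly remarks that this original method is ineffective for the even case, which is why a different route is taken. Third, your claim that non-bipartiteness is used ``only at the very end'' is backwards: it is the very first structural input, used to guarantee an induced $C_5$ (Claim~\ref{claim-C5}) around which the entire argument is organized.

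What the paper actually does is use interlacing in the \emph{opposite} direction from what you describe. Rather than finding one induced core $H$ and bounding $\lambda(G)$ from above, the paper fixes a list of sixteen small graphs ($C_7$, $H_1$--$H_3$, $T_1$--$T_4$, $J_1$--$J_4$, $L_1$--$L_4$) and, for each $F$ in the list, shows: if $F$ were induced in $G$, then Cauchy interlacing pins down enough of the eigenvalues $\lambda_2(G),\ldots,\lambda_n(G)$ that the exact triangle-counting identity $t(G)=\frac{1}{6}\sum_{i\ge 2}(\lambda_1+\lambda_i)\lambda_i^2+\frac{1}{3}(\lambda_1^2-m)\lambda_1$ (Lemma~\ref{lem21}) forces $t(G)>0$, contradicting triangle-freeness. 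With these sixteen forbidden induced subgraphs in hand, together with the edge-switching lemma (Lemma~\ref{lem-WXH}), a case analysis around the guaranteed $C_5$ shows that at most one vertex lies at distance~$2$ from it and that the remaining attachment pattern is one of a handful of explicit graphs, whose spectral radii are then compared via their characteristic polynomials. Your sketch has the right ingredients in name but not in mechanism; the combinatorial heart is the forbidden-subgraph catalogue, not a single Perron-vertex expansion.
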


The construction of $L_m$ is natural. 
Nevertheless, it is not apparent to find $Y_m$ and $T_m$. 
There are some analogous results 
 that the extremal graphs depend on the parity of the size $m$ in 
 the literature. 
For example, the $C_5$-free or $C_6$-free spectral extremal graphs 
with $m$ edges are determined in \cite{ZLS2021} when $m$ is odd, 
and later in \cite{MLH2022} when $m$ is even. 
Moreover, the $C_4^{\triangle}$-free or $C_5^{\triangle}$-free spectral extremal graphs are determined in \cite{LSW2022} for odd $m$, 
and subsequently in \cite{FYH2022,LW2022} for even $m$. 
In addition, the results of Nikiforov \cite{Niki2007laa2}, Zhai and Wang \cite{ZW2012} showed that the $C_4$-free spectral extremal graphs  with given order $n$ also rely on the parity of $n$. 
In a nutshell, for large size $m$, 
there is a common phenomenon that the extremal graphs in two cases 
are extremely similar, that is, 
the extremal graph in the even case  
is always constructed from that in the odd case by handing an edge to a 
vertex with maximum degree. 
Surprisingly, the extremal graphs in our conclusion break down this common phenomenon and show a new structure of the extremal graphs.

\medskip

\noindent 
{\bf Outline of the paper.} 
 In Section \ref{sec2}, we shall present 
 some lemmas, which  shows that 
the spectral radius of $L_m$ is smaller than that of $Y_m$ if $\frac{m}{3}\in \mathbb{N}^*$,  
as well as  that of  $T_m$ if $\frac{m-1}{3}\in \mathbb{N}^*$. 
Moreover, we will provide the estimations on 
both $\lambda (L_m)$ and $\beta (m)$.  
 In Section \ref{sec3}, 
 we will show some forbidden induced subgraphs, which helps us to characterize the local structure of the desired extremal graph. 
In Section \ref{sec4}, we present the proof of Theorem 
\ref{thm-main}. 
Our proof of Theorem \ref{thm-main} is quite different from 
that of Theorem \ref{thmZS2022}  in \cite{ZS2022dm}. 
The techniques used in our proof borrows some ideas from  
Lin, Ning and Wu \cite{LNW2021} as well as Ning and Zhai  \cite{NZ2021}. 
 We shall apply Cauchy's interlacing theorem 
 and a triangle counting result, which make full use of the information of all eigenvalues of a graph.  
In Section \ref{sec5}, we 
conclude this paper with some possible open  problems for interested readers.

\medskip 

\noindent 
{\bf Notations.} 
We shall follow the standard notation in  \cite{BM2008}  
and  consider only simple and undirected graphs. 
Let $N(v)$ be the set of neighbors of a vertex $v$, 
and $d(v)$  be the degree of $v$. 
For a subset $S\subseteq V(G)$, we write $e(S)$ for the 
number of edges with two endpoints in $S$, 
and $N_S(v)=N(v) \cap S$ for the set of neighbors of $v$ in $S$. 
Let  $K_{r+1}$ be the complete graph on $r+1$ vertices, 
and $K_{s,t}$ be the complete bipartite graph with parts of sizes 
  $s$ and $t$. 
   Let $I_k$ be an independent set on $k$ vertices.  
 We write  $C_n$ and $P_n$ for the cycle and 
 path on $n$ vertices, respectively. 
 Given graphs $G$ and $H$, we write $G\cup H$ for the 
 union of $G$ and $H$. In other words, $V(G\cup H)=V(G)\cup V(H)$ 
and $E(G\cup H)=E(G) \cup E(H)$. 
For simplicity, we write $kG$ for the union of $k$ copies of $G$. 
 We denote by $t(G)$ the number of triangles in $G$.

\section{Preliminaries and outline of the proof}

\label{sec2}

In this section, we will give the estimation on the spectral radius of $L_m$. 
Note that $L_m$ exists whenever $m$ is even,  
while  $Y_m$ and $T_m$ are  well-defined 
only if $m \,(\mathrm{mod}~3)$ is $0$ or $1$, respectively.  
We will show that $Y_m$ and $T_m$ 
have larger spectral radius than $L_m$. 
In addition, we will introduce Cauchy interlacing theorem,
 a triangle counting result in terms of eigenvalues, 
 and an operation of graphs which increases the spectral radius strictly. 
Before showing the proof of Theorem \ref{thm-main}, 
we will illustrate the key ideas of our proof, and 
then we outline the main steps of the framework.

\subsection{Bounds on the spectral radius of extremal graphs} 

By computations, 
we can obtain that $\lambda (Y_m)$ is the largest root of 
\begin{equation} \label{eq-Yx} 
 Y(x):=x^4 -x^3 +(2-m)x^2 + (m-3)x + \tfrac{m}{3}-1.  
  \end{equation}
Similarly, $\lambda (T_m)$ is the largest root of 
\begin{equation} \label{eq-Tx} 
 T(x):=x^5 - mx^3 + \tfrac{7m-22}{3} x + \tfrac{16-4m}{3},   
  \end{equation}
and $\lambda (L_m)$ is the largest root of 
the polynomial 
\begin{equation} \label{eq-Lx} 
 L(x) := x^6 - mx^4 + (\tfrac{5m}{2}-7)x^2 + (4-m)x + 2- \tfrac{m}{2}. 
 \end{equation}

 \begin{lemma} \label{lem-L-m}
 If $m\in \{6,8,10\}$, then $\lambda (L_m) > \sqrt{m-2}$. 
If $m\ge 12$ is even, then 
 \[   \sqrt{m-2.5} < \lambda (L_m) < \sqrt{m-2} . \]
 Moreover, we have $\lambda (L_6)\approx 2.1149$, 
 $\lambda (L_8) \approx 2.4938$ and $\lambda (L_{10}) \approx 2.8424$. 
 \end{lemma}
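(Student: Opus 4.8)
The plan is to prove Lemma~\ref{lem-L-m} by working directly with the characteristic polynomial $L(x)$ from \eqref{eq-Lx} and using sign analysis together with monotonicity. Since $L_m$ is connected, $\lambda(L_m)$ is a simple root of $L(x)$ and is the largest real root; moreover, because $\lambda(L_m) > \sqrt{m-1}-1$ say, it certainly exceeds every other root, so it suffices to locate where $L(x)$ changes sign near $\sqrt{m-2}$. The key observation is that $L(x)$, viewed as a polynomial in $x^2$ plus a lower-order perturbation, is eventually increasing for $x$ in the relevant range; so to show $\lambda(L_m) < \sqrt{m-2}$ it is enough to check $L(\sqrt{m-2}) > 0$, and to show $\lambda(L_m) > \sqrt{m-2.5}$ it is enough to check $L(\sqrt{m-2.5}) < 0$, provided we also certify that $L'(x) > 0$ throughout the interval $[\sqrt{m-2.5}, \sqrt{m-2}]$ so that there is exactly one sign change.

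First I would substitute $x^2 = m-2$ into \eqref{eq-Lx}. Writing $s = \sqrt{m-2}$, one gets $L(s) = (m-2)^3 - m(m-2)^2 + (\tfrac{5m}{2}-7)(m-2) + (4-m)s + 2 - \tfrac{m}{2}$. The polynomial part in $m$ simplifies: $(m-2)^3 - m(m-2)^2 = (m-2)^2(m-2-m) = -2(m-2)^2$, and then $-2(m-2)^2 + (\tfrac{5m}{2}-7)(m-2) + 2 - \tfrac{m}{2}$ collapses to a quadratic in $m$ with positive leading coefficient. After collecting, $L(s)$ has the shape $a_2 m^2 + a_1 m + a_0 - (m-4)\sqrt{m-2}$; the dominant term is the positive $a_2 m^2$, which beats the $-(m-4)\sqrt{m-2} = O(m^{3/2})$ term for all $m$ beyond a small threshold, giving $L(\sqrt{m-2}) > 0$. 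Symmetrically, substituting $x^2 = m - 2.5$ and simplifying $(m-2.5)^3 - m(m-2.5)^2 = -2.5(m-2.5)^2$ yields $L(\sqrt{m-2.5})$ in the form $b_2 m^2 + b_1 m + b_0 - (m-4)\sqrt{m-2.5}$, but now the leading coefficient $b_2$ turns out to be negative (the extra $0.5$ in the subtracted square tips the quadratic part below the linear terms), so $L(\sqrt{m-2.5}) < 0$ for large $m$. I would pin down the explicit threshold and confirm it is at most $12$; the three small cases $m \in \{6,8,10\}$ are handled by the direct numerical evaluations quoted in the statement (e.g. checking $L(\sqrt{m-2}) < 0$ there, so $\lambda(L_m) > \sqrt{m-2}$), which can be verified by plugging the stated approximate values back into \eqref{eq-Lx}.

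To make the ``exactly one sign change'' step rigorous I would bound $L'(x) = 6x^5 - 4mx^3 + (5m-14)x + (4-m)$ from below on $[\sqrt{m-2.5},\sqrt{m-2}]$. On this interval $x^2 \in [m-2.5, m-2]$, so $6x^5 - 4mx^3 = 2x^3(3x^2 - 2m) \ge 2x^3(3(m-2.5) - 2m) = 2x^3(m - 7.5)$, which is positive and of order $m^{5/2}$, while the remaining terms $(5m-14)x + (4-m)$ are of order $m^{3/2}$; hence $L'(x) > 0$ on the whole interval once $m$ is large enough, and in particular $\lambda(L_m)$ is the unique root there. This also shows $\lambda(L_m)$ lies strictly between the two bounds whenever both sign conditions hold.

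The main obstacle is purely bookkeeping: I expect the algebra of simplifying $L(\sqrt{m-2})$ and $L(\sqrt{m-2.5})$ into clean quadratics-in-$m$-minus-a-square-root-term to be the fiddly part, and one has to be careful that the ``large $m$'' threshold coming out of comparing a quadratic with a $\frac32$-power term really is below $12$, so that no gap is left between the small-case computations and the asymptotic argument. A secondary nuisance is confirming that $\lambda(L_m)$ genuinely dominates all other eigenvalues of $L_m$ (so that the largest real root of $L(x)$ equals the spectral radius) — but this is immediate from Perron--Frobenius since $L_m$ is connected, so no separate work is needed there.
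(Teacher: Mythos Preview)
Your plan is the same as the paper's --- evaluate $L$ at $\sqrt{m-2.5}$ and $\sqrt{m-2}$, then use monotonicity of $L$ --- but two points need repair.

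First, your prediction for $L(\sqrt{m-2.5})$ is wrong: the $m^2$ coefficient does not come out negative, it vanishes exactly. Carrying out the simplification you outline gives
\[
-2.5(m-2.5)^2 + \bigl(\tfrac{5m}{2}-7\bigr)(m-2.5) + 2 - \tfrac{m}{2} \;=\; -\tfrac{5}{4}m + \tfrac{31}{8},
\]
so $L(\sqrt{m-2.5}) = -\tfrac{5}{4}m + \tfrac{31}{8} - (m-4)\sqrt{m-2.5}$, which is the paper's expression and is negative for every $m\ge 4$. Your argument survives (indeed becomes easier), but the sentence about ``$b_2<0$'' is incorrect.

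Second, and more seriously, proving $L'(x)>0$ only on the bounded interval $[\sqrt{m-2.5},\sqrt{m-2}]$ does \emph{not} yield the upper bound $\lambda(L_m)<\sqrt{m-2}$. Knowing $L(\sqrt{m-2})>0$ and that $L$ is increasing just to the left of $\sqrt{m-2}$ says nothing about whether $L$ dips back below zero to the right of $\sqrt{m-2}$; the largest root could sit out there. What is needed --- and what the paper actually proves --- is $L'(x)>0$ for all $x\ge \sqrt{m-2}$, so that $L(x)>L(\sqrt{m-2})>0$ on $[\sqrt{m-2},\infty)$ and hence no root exceeds $\sqrt{m-2}$. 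Your own estimate $6x^5-4mx^3 = 2x^3(3x^2-2m)\ge 2x^3(m-7.5)$ uses only $x^2\ge m-2.5$, so it already applies on $[\sqrt{m-2},\infty)$; you just need to run the derivative argument on that unbounded range rather than on the bounded interval.
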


 \begin{proof} 
 The case $m\in \{6,8,10\}$ is straightforward. 
 Next, we shall consider the case $m\ge 12$. 
 By a direct computation,  it is easy to verify that 
 \[  L(\sqrt{m-2.5}) = -(1.25 + \sqrt{m-2.5}) m + 4\sqrt{m-2.5} +3.875 <0, \]
 which gives $\lambda (L_m) > \sqrt{m-2.5}$. 
 Moreover, we have  
 \[ L(\sqrt{m-2}) = \frac{1}{2} \left(m^2 -(9+2\sqrt{m-2})m + 8(2+\sqrt{m-2})  \right) >0.  \]
Furthermore, we have $  L'(x) := \frac{\mathrm{d}}{\mathrm{d} x} L(x)=6x^5 - 4mx^3 +(5m-14)x - m+4$. 
By calculations, one can check that $L'(\sqrt{m-2}) >0$ and $L'(x) \ge 0$ for every $x\ge \sqrt{m-2}$, 
which yields $L(x)> L(\sqrt{m-2}) >0$ for every $x> \sqrt{m-2}$. Thus 
$\lambda (L_m) < \sqrt{m-2}$. 
 \end{proof}

  \begin{lemma} \label{lem-LY}
 If $m\ge 38$ is even and $m=3t$ for some $t\in \mathbb{N}^*$, then 
 \[  \lambda (L_m) < \lambda (Y_m). \]
 \end{lemma}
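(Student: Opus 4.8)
The goal is to show $\lambda(L_m)<\lambda(Y_m)$ when $m\ge 38$ is even with $3\mid m$. I have explicit defining polynomials for both spectral radii: $\lambda(Y_m)$ is the largest root of the quartic $Y(x)=x^4-x^3+(2-m)x^2+(m-3)x+\tfrac{m}{3}-1$, and $\lambda(L_m)$ is the largest root of the sextic $L(x)=x^6-mx^4+(\tfrac{5m}{2}-7)x^2+(4-m)x+2-\tfrac{m}{2}$. Since both polynomials are eventually increasing past their largest roots, a clean strategy is to evaluate one polynomial at the largest root of the other, or more simply to pin both roots inside a small window and then compare.

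The plan is as follows. First I would use Lemma~\ref{lem-L-m}, which already gives $\sqrt{m-2.5}<\lambda(L_m)<\sqrt{m-2}$ for even $m\ge 12$. So it suffices to prove that $\lambda(Y_m)\ge \sqrt{m-2}$, i.e.\ that $Y(x)<0$ for some $x\ge\sqrt{m-2}$ (equivalently, that the largest root of $Y$ is at least $\sqrt{m-2}$), since then $\lambda(Y_m)\ge\sqrt{m-2}>\lambda(L_m)$. Concretely I would evaluate $Y(\sqrt{m-2})$: substituting $x^2=m-2$ gives $Y(\sqrt{m-2})=(m-2)^2-(m-2)\sqrt{m-2}+(2-m)(m-2)+(m-3)\sqrt{m-2}+\tfrac{m}{3}-1$. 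The quadratic-in-$m$ terms are $(m-2)^2+(2-m)(m-2)=(m-2)\bigl((m-2)-(m-2)\bigr)=0$, a convenient cancellation, leaving $Y(\sqrt{m-2})=-(m-2)\sqrt{m-2}+(m-3)\sqrt{m-2}+\tfrac{m}{3}-1=-\sqrt{m-2}+\tfrac{m}{3}-1$. For $m\ge 38$ this is positive, so this particular evaluation does not immediately do it; instead I would evaluate $Y$ at a slightly larger point such as $\sqrt{m-2}+\varepsilon$ or, cleaner, compare against the known estimate $\sqrt{m-2.5}<\lambda(L_m)$ by showing $Y(\sqrt{m-2.5})<0$ or by a direct root-location argument for $Y$ near $\sqrt{m-1}$.

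A more robust route, which I would actually carry out: show directly that $\lambda(Y_m)>\sqrt{m-2}>\lambda(L_m)$ is not quite the right threshold, and instead separate the two roots by locating $\lambda(Y_m)$ in an interval strictly above the interval containing $\lambda(L_m)$. Since $\lambda(L_m)<\sqrt{m-2}$ by Lemma~\ref{lem-L-m}, it is enough to exhibit a value $x_0\ge\sqrt{m-2}$ with $Y(x_0)\le 0$ and $Y$ increasing on $[x_0,\infty)$, forcing $\lambda(Y_m)\ge x_0\ge\sqrt{m-2}>\lambda(L_m)$. From the computation above, $Y(\sqrt{m-2})=\tfrac{m}{3}-1-\sqrt{m-2}$; I would instead try $x_0=\sqrt{m-2+c/\sqrt{m}}$ or simply note that the largest root of $Y$ exceeds $\sqrt{m-2}$ whenever the second-largest real critical behavior is controlled — but the simplest honest fix is: verify $Y(\alpha)<0$ where $\alpha$ is the largest root of $L$, using $\alpha>\sqrt{m-2.5}$ and the fact that on $[\sqrt{m-2.5},\infty)$ one can bound $Y$ from above. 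I would therefore plug $x=\sqrt{m-2.5}$ into $Y$: with $x^2=m-2.5$, the leading cancellation now reads $(m-2.5)^2+(2-m)(m-2.5)=(m-2.5)(-0.5)=-\tfrac{m-2.5}{2}$, so $Y(\sqrt{m-2.5})=-\tfrac{m-2.5}{2}-(m-2.5)\sqrt{m-2.5}+(m-3)\sqrt{m-2.5}+\tfrac{m}{3}-1=-\tfrac{m-2.5}{2}+\tfrac{m}{3}-1-\tfrac{1}{2}\sqrt{m-2.5}$, which is negative for large $m$ (the $-\tfrac{m-2.5}{2}+\tfrac{m}{3}$ part is $\approx -\tfrac{m}{6}$). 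Combined with $Y$ being increasing beyond this point (checking $Y'(x)=4x^3-3x^2+2(2-m)x+(m-3)>0$ for $x\ge\sqrt{m-2.5}$, which follows since $4x^3$ dominates), this gives $\lambda(Y_m)>\sqrt{m-2.5}>\ldots$ — but that only shows $\lambda(Y_m)>\sqrt{m-2.5}$, not $>\sqrt{m-2}$, so I must push the evaluation point up to $\sqrt{m-2}$ and accept that $Y(\sqrt{m-2})>0$ forces a finer argument: namely $\lambda(Y_m)<\sqrt{m-2}$ as well, so both roots lie in $(\sqrt{m-2.5},\sqrt{m-2})$ and I genuinely must compare them, e.g.\ by showing $L(\lambda(Y_m))>0$.

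Hence the real plan, and the step I expect to be the main obstacle, is the final comparison: show $L(\lambda(Y_m))>0$, which (since $L$ is increasing past its own largest root, by the argument in the proof of Lemma~\ref{lem-L-m}) yields $\lambda(L_m)<\lambda(Y_m)$. To do this I would use the relation $Y(\lambda(Y_m))=0$ to reduce powers: write $\lambda:=\lambda(Y_m)$, use $\lambda^4=\lambda^3-(2-m)\lambda^2-(m-3)\lambda-\tfrac{m}{3}+1$ repeatedly to express $L(\lambda)=\lambda^6-m\lambda^4+(\tfrac{5m}{2}-7)\lambda^2+(4-m)\lambda+2-\tfrac{m}{2}$ as a quadratic (or cubic) polynomial in $\lambda$ with coefficients that are polynomials in $m$, then bound that expression below using $\sqrt{m-2.5}<\lambda<\sqrt{m-2}$. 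The main obstacle is purely the bookkeeping in this polynomial reduction and the subsequent sign estimate: one must be careful that the dominant terms in $m$ do not cancel, and track the resulting lower-order terms well enough to conclude positivity for all even $m\ge 38$ (with the finitely many borderline cases, if any, checked numerically). Throughout I would also record that both $Y$ and $L$ are strictly increasing to the right of their respective largest roots — which is exactly what converts the polynomial-sign inequalities into the desired inequality between spectral radii — and this monotonicity is the routine part, established as in Lemma~\ref{lem-L-m} by checking the derivative is positive on $[\sqrt{m-2.5},\infty)$.
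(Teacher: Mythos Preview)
Your final plan---show $L(\lambda(Y_m))>0$ by reducing $L(\lambda)$ modulo the relation $Y(\lambda)=0$---is correct and is the same idea the paper uses, but the paper's execution is considerably cleaner. Instead of iterating the reduction down to a cubic in $\lambda$ and then squeezing with $\sqrt{m-2.5}<\lambda<\sqrt{m-2}$, the paper performs a \emph{single} subtraction, forming $D(x):=L(x)-x^2Y(x)$ (a quintic), and proves $D(x)>0$ for all $x\ge\sqrt{m-3}$ by evaluating $D(\sqrt{m-3})=\tfrac{m^2}{6}-m\sqrt{m-3}-m+4\sqrt{m-3}+2>0$ (this is exactly where $m\ge 38$ enters) and checking $D'(x)>0$ on that ray. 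The conclusion is then immediate: for every $x\ge\lambda(Y_m)$ one has $L(x)>x^2Y(x)\ge 0$, so $L$ has no root on $[\lambda(Y_m),\infty)$ and hence $\lambda(L_m)<\lambda(Y_m)$. This sidesteps both the bookkeeping you flag as the main obstacle and the small gap in your deduction ``$L(\lambda(Y_m))>0\Rightarrow\lambda(L_m)<\lambda(Y_m)$'': that implication needs $L$ monotone on an interval containing both roots, i.e.\ on $[\sqrt{m-2.5},\infty)$, which is stronger than what Lemma~\ref{lem-L-m} actually proves (only $x\ge\sqrt{m-2}$). Your exploratory computations $Y(\sqrt{m-2})>0$ and $Y(\sqrt{m-2.5})<0$ are not wasted---they correctly locate both spectral radii in $(\sqrt{m-2.5},\sqrt{m-2})$ and explain why a genuine polynomial comparison is unavoidable---but the one-step difference $L-x^2Y$ is all that comparison requires.
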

 
 \begin{proof} 
 We know from (\ref{eq-Yx}) that $\lambda (Y_m)$ is the largest root of 
\[   Y(x) = x^4 -x^3 + (2-m) x^2 + (m-3) x + \tfrac{m-3}{3}. \] 
 By calculations, we can verify that
\[ L(x) -x^2Y(x) = 
x^5 -2x^4 + (3-m)x^3 + (\tfrac{13m}{6} - 6)x^2 + (4-m)x +2-\tfrac{m}{2}, \]
and for every $m\ge 38$, we have 
\[ L(x) -x^2Y(x) \Big|_{x=\sqrt{m-3}} =  
 \frac{m^2}{6} - m \sqrt{m-3} -m + 4\sqrt{m-3} +2>0.  \]
Moreover, we can show that $\frac{\mathrm{d}}{\mathrm{d} x} (L(x)- x^2Y(x)) >0$ 
for every $x\ge \sqrt{m-3}$. 
Thus, it follows that $L(x) > x^2Y(x)$ for every $x\ge \sqrt{m-3}$. 
So $ \lambda (L_m) < \lambda (Y_m)$, as needed. 
\end{proof}

  \begin{lemma} \label{lem-LT}
 If $m\ge 10$ is even and $m=3t+1$ for some $t\in \mathbb{N}^*$, then 
 \[  \lambda (L_m) < \lambda (T_m). \]  
 \end{lemma}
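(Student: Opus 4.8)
The plan is to follow the same strategy as in the proof of Lemma~\ref{lem-LY}, now comparing $L(x)$ with $xT(x)$ (matching the degree-$6$ polynomial $L$ against $x$ times the degree-$5$ polynomial $T$). By~(\ref{eq-Tx}) and~(\ref{eq-Lx}), $\lambda(T_m)$ is the largest root of $T(x)$ and $\lambda(L_m)$ is the largest root of $L(x)$, and both are monic; so it suffices to prove that $L(x)>0$ for every $x\ge\lambda(T_m)$, which immediately forces $\lambda(L_m)<\lambda(T_m)$. The opening move is the elementary identity
\[
L(x)-xT(x)=\frac{m+2}{6}\,x^2+\frac{m-4}{3}\,x-\frac{m-4}{2}=:q(x),
\]
obtained by collecting coefficients.

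Next I would analyse the quadratic $q(x)$ on its own. Since $m\ge 10$, the leading coefficient $\tfrac{m+2}{6}$ is positive and the vertex of $q$ sits at $x=-\tfrac{m-4}{m+2}<0$, so $q$ is strictly increasing on $[0,\infty)$; combined with $q(1)=\tfrac{(m+2)+2(m-4)-3(m-4)}{6}=1>0$ this yields $q(x)>0$ for every $x\ge 1$. Then I would supply the lower bound $\lambda(T_m)\ge\sqrt{m-4}>1$: the graph $T_m$ contains $K_{3,\frac{m-4}{3}}$ as an (induced) subgraph, since the $\tfrac{m-4}{3}$ vertices of the blown-up independent set all have the same neighbourhood --- the two vertices of $I_2$ together with one further, unblown vertex of the $5$-cycle --- and this common neighbourhood is itself an independent set of size $3$; hence $\lambda(T_m)\ge\lambda\bigl(K_{3,\frac{m-4}{3}}\bigr)=\sqrt{m-4}$. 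Putting the pieces together: for every $x\ge\lambda(T_m)$ we have $T(x)\ge 0$ (because $\lambda(T_m)$ is the largest root of the eventually positive polynomial $T$) and $q(x)>0$, so $L(x)=xT(x)+q(x)>0$; since $L(\lambda(L_m))=0$, we conclude $\lambda(L_m)<\lambda(T_m)$.

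I do not expect a substantive obstacle in this lemma. Unlike Lemma~\ref{lem-LY}, where $L(x)-x^2Y(x)$ is a quintic whose sign has to be pinned down with a derivative estimate, here the difference $L(x)-xT(x)$ is only quadratic, so monotonicity is immediate from the vertex. The two points that do require attention are getting the identity for $q(x)$ exactly right and confirming that the threshold used (any $x\ge 1$, in particular $x=\sqrt{m-4}$) genuinely lies below $\lambda(T_m)$; the containment $K_{3,\frac{m-4}{3}}\subseteq T_m$ takes care of the latter for all even $m\equiv 1\pmod 3$ with $m\ge 10$. If one wished to bypass the subgraph observation entirely, it would equally suffice to note $\lambda(T_m)\ge\lambda(C_4)=2$ (as $K_{2,\frac{m-4}{3}}\subseteq T_m$) and check $q(2)=\tfrac{5m+4}{6}>0$.
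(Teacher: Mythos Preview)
Your proof is correct and follows essentially the same approach as the paper: both compute the identical difference $L(x)-xT(x)=\tfrac{m+2}{6}x^{2}+\tfrac{m-4}{3}x+\tfrac{4-m}{2}$ and argue that this quadratic is positive on the relevant range, which forces $\lambda(L_m)<\lambda(T_m)$. Your version is in fact a bit more careful than the paper's, which simply asserts the inequality ``for every $x\ge 3$'' without justifying that threshold; your explicit check $q(1)=1>0$ together with the subgraph bound $\lambda(T_m)\ge\lambda(K_{3,(m-4)/3})=\sqrt{m-4}$ cleanly covers all $m\ge 10$ (including $m=10$, where both $\lambda(L_{10})$ and $\lambda(T_{10})$ are actually below $3$).
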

 
 \begin{proof} 
 Recall in (\ref{eq-Tx}) that 
 $\lambda(T_m)$ is the largest root of $T(x)$. 
It is sufficient to prove that $L(x)>xT(x) $ for every $x\ge 3$.  
Upon computation, we can get 
\[ L(x)- xT(x)= \frac{m+2}{6} x^2 + \frac{m-4}{3}x + \frac{4-m}{2}>0. \]
Consequently, we have $\lambda (L_m) < \lambda (T_m)$, as desired. 
 \end{proof}

The next lemma provides a refinement on (\ref{eq-beta-ZS}) for every $m\ge 62$. 

\begin{lemma} \label{lem-beta-m}
Let $m$ be even and $m\ge 62$. Then 
\[  \sqrt{m-2} < \beta (m) < \sqrt{m-1.85}. \]
\end{lemma}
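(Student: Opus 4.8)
The plan is to prove Lemma~\ref{lem-beta-m} by locating the largest root $\beta(m)$ of the cubic $Z(x)=x^3-x^2-(m-2)x+m-3$ between the two explicit values $\sqrt{m-2}$ and $\sqrt{m-1.85}$, using the standard root-location argument for a monic polynomial: evaluate $Z$ at each endpoint, check the signs, and confirm monotonicity of $Z$ to the right of $\beta(m)$ so that no larger root is missed. First I would substitute $x=\sqrt{m-2}$ into $Z$. Writing $s=\sqrt{m-2}$, we have $Z(s)=s^3-s^2-(m-2)s+m-3 = s\cdot s^2 - s^2 - (m-2)s + m-3 = s(m-2)-s^2-(m-2)s+m-3 = -s^2+m-3 = -(m-2)+m-3 = -1<0$. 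So $Z(\sqrt{m-2})=-1<0$ for every $m$, which immediately gives $\beta(m)>\sqrt{m-2}$ once we know $Z$ is eventually positive.

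Next I would substitute $x=\sqrt{m-1.85}$. Writing $t=\sqrt{m-1.85}$, so $t^2=m-1.85$, one computes
\[
Z(t)=t^3-t^2-(m-2)t+m-3 = t(t^2-(m-2)) - t^2 + m-3 = t(0.15) - (m-1.85) + m-3 = 0.15\,t - 1.15.
\]
This is positive precisely when $t>1.15/0.15=7.\overline{6}$, i.e.\ when $m-1.85>(23/3)^2$, which certainly holds for $m\ge 62$. Hence $Z(\sqrt{m-1.85})>0$ for all even $m\ge 62$ (indeed for all $m\ge 61$). Combined with $Z(\sqrt{m-2})<0$, the intermediate value theorem produces a root of $Z$ in the open interval $(\sqrt{m-2},\sqrt{m-1.85})$.

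To conclude that this root is in fact $\beta(m)$, the largest root, I would check that $Z$ has no root larger than $\sqrt{m-1.85}$. The derivative is $Z'(x)=3x^2-2x-(m-2)$, whose larger root is $\frac{1+\sqrt{1+3(m-2)}}{3}$; for $m\ge 62$ this is well below $\sqrt{m-1.85}$ (comparing $\frac{1+\sqrt{3m-5}}{3}$ with $\sqrt{m-1.85}$ shows the former is $O(\sqrt{m}/3)$ while the latter is $\sim\sqrt{m}$), so $Z'(x)>0$ for all $x\ge\sqrt{m-1.85}$. Since $Z(\sqrt{m-1.85})>0$ and $Z$ is increasing beyond that point, $Z(x)>0$ for all $x\ge\sqrt{m-1.85}$, so every root of $Z$ is less than $\sqrt{m-1.85}$; in particular $\beta(m)<\sqrt{m-1.85}$, and the root located in the previous paragraph is the largest one, giving $\beta(m)>\sqrt{m-2}$ as well. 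None of these steps is a genuine obstacle; the only mild care needed is in verifying the threshold $m\ge 62$ suffices both for the sign of $Z(\sqrt{m-1.85})$ and for the monotonicity comparison, and one should double-check whether a slightly smaller bound would also work (it appears $m\ge 61$ is enough), though stating it for $m\ge 62$ is harmless since the main theorem assumes $m\ge 258$.
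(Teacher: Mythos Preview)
Your proof is correct and follows essentially the same approach as the paper: evaluate $Z$ at the two endpoints to get opposite signs, then use $Z'(x)>0$ for $x\ge\sqrt{m-1.85}$ to rule out any larger root. Your version is simply more explicit in carrying out the substitutions (e.g., computing $Z(\sqrt{m-1.85})=0.15\sqrt{m-1.85}-1.15$ and extracting the exact threshold), whereas the paper leaves these as routine verifications.
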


\begin{proof}
Firstly, we have 
$Z(\sqrt{m-2})=-1 <0$, which yields $\sqrt{m-2} < \beta (m)$. 
Secondly, 
one can check that $Z(\sqrt{m-1.85}) >0$ for every $m\ge 62$, 
and $Z'(x)=3x^2 -2x -(m-2) >0$ for $x\ge \sqrt{m-1.85}$. 
Therefore, we have $Z(x)>Z(\sqrt{m-1.85}) >0$ for every $x> \sqrt{m-1.85}$,  
which yields $\beta (m) < \sqrt{m-1.85}$, as required.  
\end{proof}

The following lemma is referred to as the eigenvalue interlacing 
theorem, also known as Cauchy interlacing theorem, 
which states that the eigenvalues of a principal submatrix of a 
Hermitian matrix interlace those of the underlying matrix; 
see, e.g., \cite[pp. 52--53]{Zhan13} or \cite[pp. 269--271]{Zhang11}. 
The eigenvalue interlacing theorem 
is a powerful tool to extremal combinatorics and 
 plays a significant role in two recent breakthroughs \cite{Huang2019,JTYZZ2021}.

\begin{lemma}[Eigenvalue Interlacing Theorem] \label{lemCauchy}
Let $H$ be an $n \times n$ Hermitian matrix partitioned as 
\[ H= \begin{bmatrix} A & B \\ B^* & C  \end{bmatrix} , \]
where $A$ is an $m\times m$ principal submatrix of $H$ for some 
$m\le n$. Then for every $1\le i\le m$, 
\[  \lambda_{n-m+i}(H) \le \lambda_i (A) \le \lambda_i(H). \]
\end{lemma}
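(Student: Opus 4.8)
The statement to prove is the Eigenvalue Interlacing Theorem (Cauchy interlacing). Let me sketch a proof plan.

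\textbf{Proof proposal.}

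The plan is to prove Cauchy's interlacing theorem via the Courant--Fischer min-max characterization of eigenvalues of a Hermitian matrix. Recall that for an $n\times n$ Hermitian matrix $H$ with eigenvalues $\lambda_1(H)\ge \lambda_2(H)\ge \cdots \ge \lambda_n(H)$, one has
\[
\lambda_i(H) = \max_{\substack{U\subseteq \mathbb{C}^n\\ \dim U = i}} \ \min_{\substack{x\in U\\ x\ne 0}} \frac{x^*Hx}{x^*x}
= \min_{\substack{W\subseteq \mathbb{C}^n\\ \dim W = n-i+1}} \ \max_{\substack{x\in W\\ x\ne 0}} \frac{x^*Hx}{x^*x}.
\]
First I would embed $\mathbb{C}^m$ into $\mathbb{C}^n$ as the subspace $V_0$ spanned by the first $m$ standard basis vectors, so that for every nonzero $y\in\mathbb{C}^m$, writing $x=\binom{y}{0}\in\mathbb{C}^n$, we have $x^*Hx = y^*Ay$ and $x^*x=y^*y$; thus the Rayleigh quotient of $A$ at $y$ equals the Rayleigh quotient of $H$ at $x$.

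For the upper bound $\lambda_i(A)\le \lambda_i(H)$: take an $i$-dimensional subspace $U$ of $\mathbb{C}^m$ achieving the max in the Courant--Fischer formula for $\lambda_i(A)$. Its image $\widetilde U\subseteq V_0\subseteq\mathbb{C}^n$ is also $i$-dimensional, and by the Rayleigh-quotient identity above,
\[
\lambda_i(A) = \min_{0\ne y\in U}\frac{y^*Ay}{y^*y} = \min_{0\ne x\in\widetilde U}\frac{x^*Hx}{x^*x} \le \max_{\dim U'=i}\ \min_{0\ne x\in U'}\frac{x^*Hx}{x^*x} = \lambda_i(H).
\]
For the lower bound $\lambda_{n-m+i}(H)\le \lambda_i(A)$, I would use the dual (min-max) form. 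Let $W\subseteq\mathbb{C}^m$ be an $(m-i+1)$-dimensional subspace achieving the min for $\lambda_i(A)$, so $\lambda_i(A)=\max_{0\ne y\in W} y^*Ay/y^*y$. Its image $\widetilde W\subseteq\mathbb{C}^n$ has dimension $m-i+1 = n-(n-m+i)+1$, hence qualifies as a test space in the min-max formula for $\lambda_{n-m+i}(H)$, giving
\[
\lambda_{n-m+i}(H) \le \max_{0\ne x\in\widetilde W}\frac{x^*Hx}{x^*x} = \max_{0\ne y\in W}\frac{y^*Ay}{y^*y} = \lambda_i(A).
\]
Combining the two inequalities yields $\lambda_{n-m+i}(H)\le\lambda_i(A)\le\lambda_i(H)$ for every $1\le i\le m$.

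The only genuine subtlety — and the step I would be most careful about — is citing the Courant--Fischer theorem in exactly the indexing convention used here ($\lambda_1$ largest), since a sign or index slip there propagates through the whole argument; everything else is a routine dimension count plus the Rayleigh-quotient bookkeeping for the embedding $\mathbb{C}^m\hookrightarrow\mathbb{C}^n$. Alternatively, if one prefers to avoid invoking Courant--Fischer, a self-contained route is to prove it directly: for the upper bound, choose $x$ in the intersection of the $i$-dimensional top-eigenspace-type subspace of $A$ (lifted to $\mathbb{C}^n$) with a suitable $(n-i+1)$-dimensional invariant subspace of $H$, a nonempty intersection by dimension counting in $\mathbb{C}^n$, and evaluate Rayleigh quotients from both sides; but the Courant--Fischer approach above is cleaner and I would present that, with a reference to \cite[pp.~52--53]{Zhan13}.
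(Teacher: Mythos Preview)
Your proof via the Courant--Fischer min--max theorem is correct and is in fact the standard argument; the dimension counts and Rayleigh-quotient bookkeeping are right, and your caution about the indexing convention is well placed. Note, however, that the paper does not actually prove this lemma at all---it simply states it and refers the reader to \cite[pp.~52--53]{Zhan13} or \cite[pp.~269--271]{Zhang11}---so your write-up goes beyond what the paper provides, and there is nothing to compare against in terms of approach.
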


Recall that $t(G)$ denotes the number of triangles in $G$.  
It is well-known that the value of $(i,j)$-entry of $A^k(G)$ 
is equal to the number of walks of length $k$ in $G$ 
starting from vertex $v_i$ 
to $v_j$. 
Since each triangle of $G$ contributes $6$ closed walks of length $3$, 
we can count the number of triangles and obtain 
\begin{equation} \label{count}
t(G)=\frac{1}{6} \sum_{i=1}^n A^3(i,i)= \frac{1}{6}\mathrm{Tr}(A^3)=\frac{1}{6}\sum\limits_{i=1}^n\lambda_i^3.
 \end{equation}

The forthcoming lemma 
could be regarded as a triangle spectral 
counting lemma  in terms of  
both the eigenvalues and the size of a graph. 
This could be viewed as a useful variant of (\ref{count}) by using $\sum_{i=1}^n \lambda_i^2 = \mathrm{tr}(A^2)=
\sum_{i=1}^n d_i=2m$. 

\begin{lemma}[see \cite{NZ2021}] \label{lem21} 
Let $G$ be a graph on $n$ vertices with $m$ edges. 
If $\lambda_1\ge \lambda_2 \ge \cdots \ge \lambda_n$ 
are all eigenvalues of $G$, then  
\[ t(G)=
\frac{1}{6} \sum_{i=2}^n (\lambda_1 + \lambda_i) \lambda_i^2 + 
\frac{1}{3}(\lambda_1^2-m)\lambda_1.  \] 
\end{lemma}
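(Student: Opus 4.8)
The statement to prove is Lemma \ref{lem21}, the triangle counting identity
\[
t(G)=\frac{1}{6}\sum_{i=2}^{n}(\lambda_1+\lambda_i)\lambda_i^{2}+\frac{1}{3}(\lambda_1^{2}-m)\lambda_1.
\]

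\textbf{Plan of proof.} The plan is to start from the classical spectral identity \eqref{count}, namely $t(G)=\tfrac16\sum_{i=1}^{n}\lambda_i^{3}$, and to rewrite it by isolating the Perron eigenvalue $\lambda_1$ and then reintroducing it in a weighted way so that the trace of $A^2$ can be substituted. First I would split the cube sum as
\[
6\,t(G)=\sum_{i=1}^{n}\lambda_i^{3}=\lambda_1^{3}+\sum_{i=2}^{n}\lambda_i^{3}.
\]
Next I would write each term $\lambda_i^{3}$ for $i\ge 2$ as $(\lambda_1+\lambda_i)\lambda_i^{2}-\lambda_1\lambda_i^{2}$, which is an algebraic identity, giving
\[
6\,t(G)=\lambda_1^{3}+\sum_{i=2}^{n}(\lambda_1+\lambda_i)\lambda_i^{2}-\lambda_1\sum_{i=2}^{n}\lambda_i^{2}.
\]
Then the key substitution: since $\sum_{i=1}^{n}\lambda_i^{2}=\mathrm{tr}(A^2)=\sum_{i=1}^{n}d_i=2m$, we have $\sum_{i=2}^{n}\lambda_i^{2}=2m-\lambda_1^{2}$. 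Plugging this in,
\[
6\,t(G)=\lambda_1^{3}+\sum_{i=2}^{n}(\lambda_1+\lambda_i)\lambda_i^{2}-\lambda_1(2m-\lambda_1^{2})
=\sum_{i=2}^{n}(\lambda_1+\lambda_i)\lambda_i^{2}+2\lambda_1^{3}-2m\lambda_1.
\]
Dividing by $6$ and noting $2\lambda_1^{3}-2m\lambda_1=2\lambda_1(\lambda_1^{2}-m)$ yields exactly
\[
t(G)=\frac{1}{6}\sum_{i=2}^{n}(\lambda_1+\lambda_i)\lambda_i^{2}+\frac{1}{3}\lambda_1(\lambda_1^{2}-m),
\]
which is the claimed formula.

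\textbf{On the ingredients and the obstacle.} The only external facts used are the closed-walk interpretation of powers of the adjacency matrix (so that $\mathrm{tr}(A^3)=6\,t(G)$, already recorded as \eqref{count}) and the elementary trace identity $\mathrm{tr}(A^2)=\sum d_i=2m$. Both are standard and already quoted in the excerpt immediately before the lemma. Honestly, there is no genuine obstacle here: the whole argument is a two-line algebraic manipulation of the cube-sum identity, and the ``hard part'' — if one insists on naming one — is merely the cosmetic bookkeeping of which term to fold $\lambda_1$ into so that the final expression matches the stated form with the factor $\tfrac13$ on the last term rather than $\tfrac16$ on a term $2\lambda_1(\lambda_1^2-m)$. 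One should double-check the index range ($i$ running from $2$ to $n$, with $\lambda_1+\lambda_i$ appearing even for $i=2$) and that no sign is dropped when passing from $\sum_{i\ge2}\lambda_i^2=2m-\lambda_1^2$; after that the identity is immediate.
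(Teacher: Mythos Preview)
Your proof is correct and follows exactly the approach the paper indicates: the text preceding Lemma~\ref{lem21} explicitly says the identity is ``a useful variant of (\ref{count}) by using $\sum_{i=1}^n \lambda_i^2 = \mathrm{tr}(A^2)=2m$,'' and the paper gives no further proof beyond that one-line hint (citing \cite{NZ2021}). Your derivation is precisely the fleshed-out version of that hint, so there is nothing to add.
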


For convenience, we introduce a function $f(x)$, 
which will be frequently used in Section \ref{sec3} 
to find the induced substructures that are forbidden in the extremal graph. 

\begin{lemma} \label{lem-fx}
Let $f(x)$ be a function given as 
\[  f(x) :=(\sqrt{m-2.5} + x)x^2. \] 
 If $a\le x \le b \le 0$, 
then 
\[  f(x)\ge \min\{ f(a),f(b) \}. \]  
\end{lemma}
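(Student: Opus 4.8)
\textbf{Proof proposal for Lemma \ref{lem-fx}.}

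The plan is to treat this as a routine calculus exercise on the cubic $f(x) = (\sqrt{m-2.5}+x)x^2 = x^3 + \sqrt{m-2.5}\,x^2$, restricted to the interval $[a,b]$ with $a \le x \le b \le 0$. Write $c := \sqrt{m-2.5} > 0$ for brevity, so $f(x) = x^3 + c x^2$ and $f'(x) = 3x^2 + 2cx = x(3x+2c)$. The critical points are $x=0$ and $x = -\tfrac{2c}{3}$, and on the interval $(-\infty,0]$ the sign of $f'$ is: positive on $(-\infty, -\tfrac{2c}{3})$, negative on $(-\tfrac{2c}{3}, 0)$. Hence $f$ is increasing then decreasing on $(-\infty,0]$, i.e.\ it is \emph{unimodal with a single interior local maximum} at $x=-\tfrac{2c}{3}$.

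First I would record the elementary fact that a continuous function which is unimodal (increasing then decreasing) on a closed interval $[a,b]$ attains its minimum over $[a,b]$ at one of the two endpoints. Concretely: for any $x \in [a,b]$, either $x$ lies in the increasing part, in which case $f(x) \ge f(a)$, or $x$ lies in the decreasing part, in which case $f(x) \ge f(b)$; in either case $f(x) \ge \min\{f(a), f(b)\}$. One must be slightly careful about the location of the critical point $-\tfrac{2c}{3}$ relative to $[a,b]$: if $-\tfrac{2c}{3} \le a$ then $f$ is decreasing on all of $[a,b]$ and $f(x)\ge f(b)$; if $-\tfrac{2c}{3}\ge b$ then $f$ is increasing on $[a,b]$ and $f(x)\ge f(a)$; and if $a < -\tfrac{2c}{3} < b$ then split $[a,b]$ at $-\tfrac{2c}{3}$ and apply monotonicity on each piece. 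In all three cases the conclusion $f(x) \ge \min\{f(a),f(b)\}$ follows.

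There is no real obstacle here — the statement is a packaging of the mean value / monotonicity argument so that it can be quoted cleanly in Section \ref{sec3}, where $f$ is evaluated at various small negative arguments arising from eigenvalue estimates. The only point deserving a word of care is that the bound is stated with $\min\{f(a),f(b)\}$ rather than a one-sided bound, precisely because the sign of $f$ and the ordering of $f(a)$ versus $f(b)$ depend on how close $a,b$ are to $0$ versus to $-\tfrac{2c}{3}$, so keeping the minimum makes the lemma robust to that case distinction. I would therefore present the proof in the three-case form above, each case a one-line monotonicity statement, and conclude.
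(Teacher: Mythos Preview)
Your proof is correct and follows exactly the same approach as the paper: both compute the derivative, observe that $f$ is increasing on $(-\infty,-\tfrac{2}{3}\sqrt{m-2.5})$ and decreasing on $[-\tfrac{2}{3}\sqrt{m-2.5},0]$, and conclude that the minimum on any subinterval of $(-\infty,0]$ is attained at an endpoint. Your write-up is simply a more explicit version of the paper's one-line argument.
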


\begin{proof}
The function $f(x)$ is  increasing when 
$x\in (-\infty, -\frac{2}{3}\sqrt{m-2.5})$, 
and  decreasing when $x\in [-\frac{2}{3}\sqrt{m-2.5}, 0]$. 
Thus the desired statement holds immediately. 
\end{proof}

\medskip
The following lemma \cite{WXH2005} is also needed in this paper,
it provides an operation on a connected graph and  increases the adjacency spectral radius strictly.

\begin{lemma}[Wu--Xiao--Hong \cite{WXH2005}, 2005] \label{lem-WXH}
Let $G$ be a connected graph
and $(x_1,\ldots ,x_n)^T$ be a Perron vector of $G$,
where $x_i$ corresponds to $v_i$.
Assume that
 $v_i,v_j \in V(G)$ are vertices such that $x_i \ge x_j$, and $S\subseteq N_G(v_j) \setminus N_G(v_i)$ is  a non-empty set.
 Denote $G^*=G- \{v_jv : v\in S\} +
\{v_iv : v\in S\}$. Then $\lambda (G) < \lambda (G^*)$.
\end{lemma}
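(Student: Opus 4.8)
\textbf{Proof proposal for Lemma \ref{lem-WXH} (the edge-switching lemma).}

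The plan is to use the Rayleigh quotient characterization of the spectral radius together with the positivity of the Perron vector, which is exactly the kind of argument one expects for a ``local switching increases $\lambda$'' statement. Let $x=(x_1,\ldots,x_n)^T$ be the Perron vector of $G$, normalized so that $\|x\|_2=1$; since $G$ is connected, $A(G)$ is irreducible and nonnegative, so by Perron--Frobenius every $x_i>0$ and $\lambda(G)=x^TA(G)x$. Since $x$ is a unit vector, it is also a feasible test vector for the variational bound $\lambda(G^*)\ge x^TA(G^*)x$ applied to the new graph $G^*=G-\{v_jv:v\in S\}+\{v_iv:v\in S\}$. The whole proof then reduces to comparing these two quadratic forms.

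First I would write out the difference of the two quadratic forms explicitly. Because $A(G^*)$ differs from $A(G)$ only in the entries indexed by pairs $\{v_i,v\}$ and $\{v_j,v\}$ for $v\in S$ (each such entry flipping between $0$ and $1$, and doing so consistently since $S\subseteq N_G(v_j)\setminus N_G(v_i)$ guarantees the old edges $v_jv$ are present and the new edges $v_iv$ are absent in $G$), one gets
\[
  x^TA(G^*)x - x^TA(G)x = 2\sum_{v\in S} x_v\,(x_i - x_j).
\]
Now invoke the hypotheses: $x_i\ge x_j$ makes every term $x_v(x_i-x_j)$ nonnegative, and $S$ being nonempty together with $x_v>0$ for all $v$ (Perron positivity) makes the sum strictly positive unless $x_i=x_j$. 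Hence $x^TA(G^*)x > x^TA(G)x = \lambda(G)$ provided $x_i>x_j$; the only remaining subtlety is the boundary case $x_i=x_j$.

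To finish, I would handle the case $x_i=x_j$ separately. Here the quadratic forms are equal, so we only obtain $\lambda(G^*)\ge x^TA(G^*)x = \lambda(G)$, and equality would force $x$ to be a Perron vector of $G^*$ as well, i.e.\ $A(G^*)x=\lambda(G)x$. Comparing the coordinate equations of $A(G)x=\lambda(G)x$ and $A(G^*)x=\lambda(G)x$ at the vertex $v_i$ gives $\lambda(G)x_i = \sum_{u\in N_G(v_i)}x_u$ and $\lambda(G)x_i = \sum_{u\in N_G(v_i)}x_u + \sum_{v\in S}x_v$, which forces $\sum_{v\in S}x_v=0$, contradicting $S\neq\varnothing$ and $x_v>0$. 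Therefore $\lambda(G)<\lambda(G^*)$ in all cases. The main obstacle, and the only place where care is genuinely needed, is this equality analysis: one must use strict positivity of the Perron entries (hence connectedness of $G$) rather than merely nonnegativity, and one must be careful that $G^*$ is still connected — or rather, one should argue via the coordinate equation above so that connectedness of $G^*$ is not even needed, since $x>0$ already yields the contradiction directly from $A(G^*)x=\lambda(G)x$.
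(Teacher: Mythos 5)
Your argument is correct: the paper itself gives no proof of this lemma (it is quoted from Wu--Xiao--Hong \cite{WXH2005}), and your Rayleigh-quotient computation $x^TA(G^*)x-x^TA(G)x=2\sum_{v\in S}x_v(x_i-x_j)\ge 0$ together with the equality analysis via the eigen-equation $A(G^*)x=\lambda(G)x$ at $v_i$ is exactly the standard proof of this switching lemma, and your observation that positivity of $x$ (not connectedness of $G^*$) is what kills the boundary case $x_i=x_j$ is the right way to close it. The only unstated technicality, shared with the lemma as quoted in the paper, is the implicit assumption $v_i\notin S$ so that $G^*$ remains a simple graph; with that convention your proof is complete.
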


\subsection{Proof overview}

As promised, we will interpret the key ideas and steps of the proof of 
Theorem \ref{thm-main}. 
First of all, we would like to make a comparison of 
the proofs of Theorem \ref{thm-LNW} and Theorem \ref{thmZS2022}. 
The proof of Theorem \ref{thm-LNW} in \cite{LNW2021}  is short and succinct.  
It relies on the base case in 
Conjecture \ref{conj-BN}, which states that 
if $G$ is a triangle-free graph with $m\ge 2$ edges, then 
\begin{equation}  \label{eq-BN-tri} 
\lambda_1^2(G) + \lambda_2^2(G) \le m, 
\end{equation}
where the equality holds if and only if $G$ is one of some specific bipartite graphs; see \cite{LNW2021,Niki2021}. 
Combining the condition in Theorem \ref{thm-LNW}, 
we know that if $G$ is a triangle-free non-bipartite graph such that  $\lambda_1 (G) \ge \sqrt{m-1}$, 
then $\lambda_2(G)<1$. 
Such a bound on the second largest eigenvalue 
provides great convenience  to characterize the local structure 
of $G$. For instance,  
combining $\lambda_2(G)<1$ with 
the Cauchy interlacing theorem, we obtain that $C_5$ is 
a shortest odd cycle 
of $G$. 
However, it is not sufficient to use  
(\ref{eq-BN-tri})  for the proof of Theorem \ref{thmZS2022}.   
Indeed, if  $G$ satisfies further that $\lambda (G) \ge 
\beta (m)$, 
then we get $\lambda_2(G) <2$ only, 
since $\beta(m) \to  \sqrt{m-2}$ as $m$ tends to infinity. 
Nevertheless, 
this bound is invalid for our purpose to describe the local structure of $G$. 
The original proof of 
Zhai and Shu \cite{ZS2022dm} for Theorem \ref{thmZS2022}  
 avoids the use of  (\ref{eq-BN-tri}) and 
 applies the Perron components. Thus it needs to
make more careful structure analysis of the desired extremal graph. 
 
\medskip  
To overcome the aforementioned obstacle, 
we will get rid of the use of (\ref{eq-BN-tri}),
and then exploit the information of all eigenvalues 
of graphs, instead of the second largest eigenvalue merely.    
Our proof of Theorem \ref{thm-main} grows out  from  
the original proof \cite{LNW2021} of Theorem \ref{thm-LNW}, 
which provided a method to find forbidden induced substructures. 
We will frequently use Cauchy interlacing theorem and 
the triangle counting result in Lemma \ref{lem21}.

\medskip 
The main steps of our proof can be outlined as below.  
It introduces the main ideas of the approach of this paper 
for treating the problem involving triangles. 

\begin{itemize}
\item[$\spadesuit$]
Assume that $G$ is a spectral extremal graph with even size, that is, $G$ is a non-bipartite triangle-free graph 
and attains the maximum spectral radius. 
First of all, we will show that $G$ is connected and 
it does not contain the odd cycle $C_{2k+1}$ as an induced subgraph 
for every $k\ge 3$. Consequently, 
$C_5$ is a shortest odd cycle in $G$.

\item[$\heartsuit$] 
Let $S$ be the set of vertices of a copy of $C_5$ in $G$.  
By using Lemma \ref{lemCauchy} and 
Lemma \ref{lem21},  we will find more 
forbidden substructures in the desired extremal graph; 
see, e.g., the graphs $H_1,H_2,H_3$ in Lemma \ref{lem-H123}. 
In this step, we will characterize and refine 
the local structure on the vertices around the cycle $S$.

\item[$\clubsuit$] 
Using the information on the local structure of $G$, 
we will show that $V(G)\setminus S$ has at most one vertex 
with distance two to $S$; see Claim \ref{claim-42}. 
Moreover, there are at most three vertices of $V(G)\setminus S$ with exactly one neighbor on  $S$, 
and all these vertices are adjacent to a same vertex of $S$. 

\item[$\diamondsuit$] 
Combining with the three steps above, 
we will determine the structure of $G$ and 
show some possible graphs with large spectral radius. 
By comparing the polynomials of graphs, 
we will prove that $G$ is isomorphic to $Y_m,T_m$ or $L_m$. 
\end{itemize}

\section{Some forbidden induced subgraphs} 

\label{sec3}

In this section, we always assume that $G$ is a 
non-bipartite triangle-free graph with even size $m$ and 
$G$ attains the maximal spectral radius. Since $L_m$ 
 is triangle-free and non-bipartite, 
 we get by Lemma \ref{lem-L-m} that 
\begin{equation}  \label{eq-G-25}
\lambda (G)\ge \lambda (L_m) 
> \sqrt{m-2.5}. 
\end{equation}
On the other hand, 
we obtain from  Theorem \ref{thmZS2022} 
and Lemma \ref{lem-beta-m} that 
\begin{equation} \label{eq-1.85}
\lambda (G)< \beta(m) < \sqrt{m-1.85}. 
\end{equation} 

Our aim in this section is to determine some
 forbidden induced substructures of the extremal graph $G$. 
 In this process, we need to exclude $16$ induced substructures for our purpose.  
 One of the main research directions in the proof 
 is to show that $G$ has at least one triangle, i.e., $t(G)>0$, whenever the substructure forms an induced 
 copy in $G$. 
 Throwing away some tedious calculations, 
 the main tools used in our proof attribute to 
 Cauchy Interlacing Theorem (Lemma \ref{lemCauchy}) 
and the triangle counting result (Lemma \ref{lem21}).

\begin{lemma} \label{lem-C5}
For any odd integer $s\ge 7$, 
an extremal graph $G$ does not contain $C_s$ 
as an induced cycle. 
 Consequently,  $C_5$ is a shortest odd cycle in $G$. 
\end{lemma}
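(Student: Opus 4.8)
The plan is to fix an odd integer $s\ge 7$ and argue by contradiction: suppose $C_s$ appears as an induced cycle in the extremal graph $G$, and derive that $G$ contains a triangle, contradicting triangle-freeness. The natural engine is the combination of Cauchy's interlacing theorem (Lemma~\ref{lemCauchy}) and the triangle counting identity (Lemma~\ref{lem21}), together with the spectral window $\sqrt{m-2.5}<\lambda(G)<\sqrt{m-1.85}$ from \eqref{eq-G-25} and \eqref{eq-1.85}. First I would record the eigenvalues of $C_s$: they are $2\cos(2\pi j/s)$ for $j=0,1,\dots,s-1$, so $\lambda_1(C_s)=2$ while the remaining eigenvalues lie in $(-2,2)$, and crucially $C_s$ has several eigenvalues close to $-2$ (namely $2\cos(2\pi\lfloor s/2\rfloor/s)$ and its neighbour). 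Applying interlacing to the principal submatrix $A(C_s)$ of $A(G)$, the bottom eigenvalues of $G$ are pushed down: in particular $\lambda_{n}(G)\le \lambda_s(C_s)$ and $\lambda_{n-1}(G)\le \lambda_{s-1}(C_s)$, both of which are strictly less than $-1$ when $s\ge 7$ (for $s=7$ the two smallest eigenvalues of $C_7$ are about $-1.80$; they only get closer to $-2$ as $s$ grows).

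Next I would feed this into the triangle count. Using Lemma~\ref{lem21},
\[
t(G)=\frac{1}{6}\sum_{i=2}^{n}(\lambda_1+\lambda_i)\lambda_i^2+\frac{1}{3}(\lambda_1^2-m)\lambda_1 .
\]
Since $t(G)=0$, this gives an exact identity. I would bound the right-hand side from below. The term $\frac13(\lambda_1^2-m)\lambda_1$ is $\ge \frac13(-2.5)\lambda_1$ by the left inequality in \eqref{eq-G-25}, i.e.\ bounded below by roughly $-\frac{5}{6}\lambda_1$, which is of order $-\sqrt{m}$. For the sum $\sum_{i\ge2}(\lambda_1+\lambda_i)\lambda_i^2$, the only potentially very negative contributions come from eigenvalues $\lambda_i<-\lambda_1$, but there are none of those once $\lambda_1>\sqrt{m-2.5}$ is large, since $\lambda_n\ge -\lambda_1$ always. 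More usefully, I would split: terms with $\lambda_i\ge 0$ contribute nonnegatively; for the negative $\lambda_i$, the function $f(x)=(\lambda_1+x)x^2\ge (\sqrt{m-2.5}+x)x^2$ is exactly the quantity controlled by Lemma~\ref{lem-fx}, which says $f(x)\ge\min\{f(a),f(b)\}$ on an interval $[a,b]\subseteq(-\infty,0]$. Since every negative $\lambda_i$ lies in $[-\lambda_1,0]$, and using that $\sum_i\lambda_i^2=2m$ so there is limited ``mass'' available, the total negative contribution is $O(m)$ at worst — but I need the \emph{positive} contribution from the few eigenvalues near $2$ coming from $C_s$ (via interlacing $\lambda_i(G)$ is at least the corresponding eigenvalue of $C_s$ for small $i$) to beat this.

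The cleaner route, which I would actually pursue, is to contrast $G$ with $SK_{2,\frac{m-1}{2}}$-type behaviour only indirectly, and instead make the interlacing do all the work: because $C_s$ contributes two eigenvalues below $-1$ to $G$ via Lemma~\ref{lemCauchy}, and $G$ already has $\lambda_1$ near $\sqrt m$, one shows $\lambda_2(G)^2+\cdots$ forces, through $\sum\lambda_i^2=2m$ and $\sum\lambda_i^3=6t(G)=0$, a configuration incompatible with the window $\lambda_1<\sqrt{m-1.85}$. Concretely: from $\sum_{i\ge 2}\lambda_i^3=-\lambda_1^3$ and $\sum_{i\ge2}\lambda_i^2=2m-\lambda_1^2<m+2.5$, while two of the $\lambda_i$ ($i\ge2$) are $\le -1.8$ contributing at least $2\cdot 1.8^2$ to the sum of squares and at most $2\cdot(-1.8)^3$ negatively—no, rather I would push: the mass $\sum_{i\ge2}\lambda_i^2$ is small (about $m$), yet $|\sum_{i\ge2}\lambda_i^3|=\lambda_1^3\approx m^{3/2}$ is large, which forces $\max_{i\ge2}|\lambda_i|$ to be of order $\sqrt m$; but that large eigenvalue must be \emph{negative} (else $\lambda_2\gtrsim\sqrt m$ and then $\lambda_1^2+\lambda_2^2>m$ contradicts the triangle-free bound \eqref{eq-BN-tri}), so $\lambda_n\approx -\sqrt m\approx-\lambda_1$, meaning $G$ is ``almost bipartite'' with $\lambda_1+\lambda_n$ tiny. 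Then in Lemma~\ref{lem21} the single term $(\lambda_1+\lambda_n)\lambda_n^2$ is small, all other negative terms are genuinely $O(1)$ by $\sum\lambda_i^2$ accounting and Lemma~\ref{lem-fx}, and the two positive eigenvalues near $2$ guaranteed by interlacing with $C_s$ push $t(G)$ strictly above $0$ — the contradiction.

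\textbf{Main obstacle.} The delicate point is the middle of the spectrum: interlacing with $C_s$ only \emph{locates} a couple of eigenvalues of $G$ (one near $2$, two below $-1.8$), but to evaluate $t(G)$ via Lemma~\ref{lem21} I must control the \emph{entire} sum $\sum_{i\ge2}(\lambda_1+\lambda_i)\lambda_i^2$, and the terms with $\lambda_i$ moderately negative are exactly where the bound is tight. The real work is showing these terms cannot collectively cancel the positive contribution, which is where Lemma~\ref{lem-fx} (convexity/monotonicity of $(\sqrt{m-2.5}+x)x^2$ on $[-\lambda_1,0]$) plus the energy constraint $\sum\lambda_i^2=2m$ must be combined carefully; getting the constant $258$ (or rather, a threshold on $m$) to work out is the bookkeeping cost. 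Once $C_s$ is excluded as an induced cycle for every odd $s\ge 7$, the ``consequently'' is immediate: a shortest odd cycle in any graph is always induced, and $G$ is non-bipartite hence has an odd cycle, while triangle-freeness rules out $C_3$, so the shortest odd cycle has length $5$.
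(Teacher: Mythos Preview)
Your framework is right and matches the paper's: Cauchy interlacing applied to an induced $C_s$, the triangle-count identity of Lemma~\ref{lem21}, and the spectral window \eqref{eq-G-25}--\eqref{eq-1.85}. But there is a sign confusion that makes your ``main obstacle'' a non-obstacle, and once you see it the proof collapses to a short numerical check rather than the delicate balancing act you describe.

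Since $|\lambda_i|\le\lambda_1$ for every $i$, each summand $(\lambda_1+\lambda_i)\lambda_i^2$ in Lemma~\ref{lem21} is \emph{nonnegative}. There is no ``total negative contribution'' from the sum to control; you may freely drop all but a handful of terms and still have a valid lower bound on $6t(G)$. The only negative piece is $2(\lambda_1^2-m)\lambda_1\ge -5\lambda_1>-5\sqrt{m-1.85}$. So the entire task is to exhibit a few indices $i\ge 2$ with $\sum f(\lambda_i)>5\sqrt{m-1.85}+O(1)$, where $f(x)=(\sqrt{m-2.5}+x)x^2\le(\lambda_1+x)x^2$.

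The paper does exactly this for $s=7$ using $i\in\{2,3,n-1,n\}$. Interlacing gives $\lambda_2,\lambda_3\ge 2\cos\tfrac{2\pi}{7}\approx 1.246$ (not ``near $2$'': the second and third eigenvalues of $C_7$ are both $1.246$, and this \emph{double} contribution is essential) and $\lambda_{n-1},\lambda_n\le -1.801$. The key point you miss is that $f(-1.801)\approx 3.24\sqrt{m-2.5}$ is large and \emph{positive}: the most negative eigenvalues of $G$ are not a nuisance to be bounded away but the dominant source of positivity in the sum. To invoke Lemma~\ref{lem-fx} on $\lambda_n$ one also needs a lower bound; the paper gets $\lambda_n>-\sqrt{m-4.244}$ from $\sum\lambda_i^2=2m$ after subtracting the already-located squares, and then $f(\lambda_n)\ge\min\{f(-\sqrt{m-4.244}),f(-1.801)\}>0.8\sqrt{m-2.5}$. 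Summing the four $f$-values yields roughly $7.15\sqrt{m-2.5}$, which beats $5\sqrt{m-1.85}$, so $t(G)>0$ --- contradiction. For odd $s>7$ the eigenvalues of $C_s$ only improve these bounds by monotonicity of cosine. Your third paragraph's detour through $\lambda_n\approx -\lambda_1$ and the Bollob\'{a}s--Nikiforov inequality is therefore unnecessary. Your final sentence (a shortest odd cycle is induced, hence has length $5$) is correct.
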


\begin{proof}
Since $G$ is non-bipartite, let $s$ be the length of a shortest odd cycle 
in $G$. Since $G$ is triangle-free, we have $s\ge 5$. 
Moreover, a shortest odd cycle $C_s \subseteq G$ 
must be an induced odd cycle. It is well-known that the eigenvalues of $C_s$ are given as 
$ \left\{2\cos \frac{2\pi k}{s}: k=0,1,\ldots ,s-1  \right\}$.  
In particular, we have 
\[ \mathrm{Eigenvalues}(C_7)= \{2,1.246, 1.246,-0.445, -0.445, -1.801, -1.801\}. \]
Since $C_s$ is an induced copy in $G$, 
we know that $A(C_s)$ is a principal submatrix of $A(G)$. 
Lemma \ref{lemCauchy} implies that for every $i\in \{1,2,\ldots ,s\}$, 
\[ \lambda_{n-s+i}(G) \le  \lambda_i(C_s) \le \lambda_i (G). \]  
where $\lambda_i$ means the $i$-th largest eigenvalue. 
We next show that $s=5$. 
For convenience, we write $\lambda_1\ge \lambda_2 \ge \cdots \ge \lambda_n$ 
for  eigenvalues of $G$ in the non-increasing order.

Suppose on the contrary that $C_7$ is an induced odd cycle of $G$,
 then $\lambda_2 \ge \lambda_2(C_7) = 2\cos \frac{2\pi}{7}\approx 1.246$  
 and $\lambda_3 \ge \lambda_3(C_7)=2 \cos \frac{12\pi}{7} \approx 1.246$.  
 Recall in Lemma \ref{lem-fx} that 
\[  f(x) =(\sqrt{m-2.5} + x)x^2. \] 
Evidently, we get 
 \[  f(\lambda_2) \ge f(1.246) 
 \ge 1.552 \sqrt{m-2.5} + 1.934 \]
 and 
  \[  f(\lambda_3) \ge f(1.246)  \ge 1.552 \sqrt{m-2.5} + 1.934. \]
  Our goal is to get a contradiction by applying Lemma \ref{lem21} and showing $t(G)>0$. 
  It is not sufficient to obtain $t(G)>0$ by using the  positive eigenvalues of $C_7$ only. 
  Next, we are going to exploit the negative eigenvalues of $C_7$. 
For $i\in \{4,5,6,7\}$, we know that $\lambda_i(C_7)<0$. 
The Cauchy interlacing theorem yields 
$\lambda_{n-3} \le \lambda_4(C_7)=-0.445$,
$\lambda_{n-2} \le \lambda_5(C_7)=-0.445$, 
$\lambda_{n-1} \le \lambda_6(C_7)=-1.801$ 
and $\lambda_{n} \le \lambda_7(C_7)=-1.801$. 
To apply Lemma \ref{lem-fx}, we need to 
find the lower bounds on $\lambda_i $ for each 
$i\in \{n-3,n-2,n-1,n\}$. We know from (\ref{eq-G-25}) that 
$\lambda_1 \ge \lambda (L_m) > \sqrt{m-2.5}$, and  then 
$\lambda_n^2 \le 2m - (\lambda_1^2 +\lambda_2^2 + 
\lambda_3^2 + \lambda_{n-3}^2 + \lambda_{n-2}^2 + 
\lambda_{n-1}^2 ) < 2m-(m-2.5 + 6.744) = m- 4.244$, 
which implies $-\sqrt{m-4.244} < \lambda_n \le -1.801$. By Lemma \ref{lem-fx}, we get 
\[ f(\lambda_n) \ge 
\min\{f(-\sqrt{m-4.244}), f(-1.801)\}> 0.8\sqrt{m-2.5} . \]
 Similarly, 
we have $\lambda_{n-1}^2+\lambda_n^2 \le 
2m - (\lambda_1^2 +\lambda_2^2 + 
\lambda_3^2 + \lambda_{n-3}^2 + \lambda_{n-2}^2) 
< m- 1.001$. Combining with $\lambda_{n-1}^2 \le \lambda_n^2$, 
we get $-\sqrt{(m-1.001)/2} < \lambda_{n-1} \le -1.801$. 
By Lemma \ref{lem-fx}, we obtain  
\[  f(\lambda_{n-1}) \ge 
\min \{f(-\sqrt{(m-1.001)/2}), f(-1.801)\} > 
3.243\sqrt{m-2.5} - 5.841.  \]
Using (\ref{eq-G-25}) and (\ref{eq-1.85}), 
we have $  \sqrt{m-2.5} < \lambda_1 < \sqrt{m-1.85}$.  
By Lemma \ref{lem21}, we get 
\begin{align*} 
t(G)  &> \frac{1}{6} (f(\lambda_2) + f(\lambda_3) + f(\lambda_n) 
+ f(\lambda_{n-1}) )
 - \frac{2.5}{3}\lambda_1 \\
 & >\frac{1}{6} (7.147 \sqrt{m-2.5} - 5\sqrt{m-1.85}- 1.973) >0 . 
\end{align*} 
This is a contradiction. 
By the monotonicity of $\cos x$, we can prove that 
$C_s$ can not be an induced subgraph of $G$ for each odd integer $s\ge 7$. 
Thus we get $s=5$. 
\end{proof}

Using a similar method as in the proof of Lemma \ref{lem-C5}, 
we can prove the following lemmas, whose proofs are postponed to the Appendix. To avoid unnecessary calculations, we did not attempt to get the best bound on the size of $G$, and then we consider the case $m\ge 4.7\times 10^5$.

\begin{lemma} \label{lem-H123}
 $G$ does not contain any graph of 
 $\{H_1,H_2,H_3\}$ as an induced 
 subgraph. 
 \end{lemma}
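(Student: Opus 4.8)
\textbf{Proof proposal for Lemma \ref{lem-H123}.}
The plan is to mimic closely the argument used in the proof of Lemma \ref{lem-C5}. Fix $i\in\{1,2,3\}$ and suppose, for contradiction, that $H_i$ is an induced subgraph of the extremal graph $G$. Since each $H_i$ contains an induced $C_5$ (built on the shortest odd cycle guaranteed by Lemma \ref{lem-C5}) together with a few extra vertices, $A(H_i)$ is a principal submatrix of $A(G)$, so Cauchy's interlacing theorem (Lemma \ref{lemCauchy}) gives, for each $j$, the two-sided bound $\lambda_{n-|V(H_i)|+j}(G)\le \lambda_j(H_i)\le \lambda_j(G)$. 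First I would compute the spectrum of each $H_i$ numerically, recording in particular the second and third largest eigenvalues (which will be safely above $1$) and the most negative one or two eigenvalues (which will be below $-1$). These interlacing inequalities then pin down lower bounds on $\lambda_2(G),\lambda_3(G)$ and upper bounds on $\lambda_n(G),\lambda_{n-1}(G)$, and possibly $\lambda_{n-2}(G)$.

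The core of the argument is to feed these eigenvalue estimates into the triangle counting identity of Lemma \ref{lem21}, namely
\[
t(G)=\frac{1}{6}\sum_{j=2}^{n}(\lambda_1+\lambda_j)\lambda_j^2+\frac{1}{3}(\lambda_1^2-m)\lambda_1,
\]
and to show the right-hand side is strictly positive, contradicting triangle-freeness. Writing $f(x)=(\sqrt{m-2.5}+x)x^2$ and using $\lambda_1>\sqrt{m-2.5}$ from \eqref{eq-G-25}, each term $(\lambda_1+\lambda_j)\lambda_j^2\ge f(\lambda_j)$, so it suffices to show $\frac{1}{6}\sum_{j}f(\lambda_j)$ (over the handful of indices we control) exceeds $\frac{1}{3}(m-\lambda_1^2)\lambda_1<\frac{2.5}{3}\lambda_1<\frac{2.5}{3}\sqrt{m-1.85}$, the last step using \eqref{eq-1.85}. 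For the positive eigenvalues this is direct. For the negative ones, as in Lemma \ref{lem-C5}, I would bound $|\lambda_n|$ (and $|\lambda_{n-1}|$) from below by peeling off the known squares from $\sum\lambda_j^2=2m$: e.g.\ $\lambda_n^2\le 2m-(\lambda_1^2+\lambda_2^2+\lambda_3^2+\lambda_{n-3}^2+\lambda_{n-2}^2+\lambda_{n-1}^2)$, then invoke Lemma \ref{lem-fx} to get $f(\lambda_n)\ge\min\{f(-\sqrt{\cdots}),f(\lambda_n\text{-upper bound})\}$, which for large $m$ is of order $\sqrt{m-2.5}$. Summing the contributions and comparing leading $\sqrt{m}$-coefficients yields $t(G)>0$ once $m$ is large enough, and the threshold $m\ge 258$ is chosen to absorb all the constant terms uniformly across $H_1,H_2,H_3$.

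The main obstacle I anticipate is bookkeeping rather than conceptual: the three graphs $H_1,H_2,H_3$ have different orders and spectra, so the precise set of indices one can control (how many negative eigenvalues fall below $-1$, whether $\lambda_{n-2}$ is usable) varies case by case, and one must verify in each case that the positive-eigenvalue contributions plus the negative-eigenvalue contributions beat $\frac{2.5}{3}\sqrt{m-1.85}$ with room to spare. A secondary subtlety is that the ``peeling'' lower bounds on $|\lambda_n|,|\lambda_{n-1}|$ require the upper estimate $\lambda_1<\sqrt{m-1.85}$ to keep the subtracted mass large enough; one must be careful that the square-sum budget $2m$ is not over-allocated, i.e.\ that the quantities under the square roots stay positive. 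Since these are exactly the kinds of routine (if tedious) estimates carried out in Lemma \ref{lem-C5}, and since all the needed tools—interlacing, Lemma \ref{lem21}, Lemma \ref{lem-fx}, and the bounds \eqref{eq-G-25}, \eqref{eq-1.85}—are already in place, I would defer the full case analysis to the Appendix as the authors indicate.
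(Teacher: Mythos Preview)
Your proposal is correct and follows essentially the same route as the paper's proof: compute the spectra of $H_1,H_2,H_3$, apply Cauchy interlacing to control several eigenvalues of $G$, bracket the most negative eigenvalues using the trace identity $\sum\lambda_j^2=2m$ together with $\lambda_1>\sqrt{m-2.5}$, feed everything into Lemma~\ref{lem21} via $f(x)$ and Lemma~\ref{lem-fx}, and conclude $t(G)>0$ for $m\ge 258$. One small descriptive slip: the peeling step yields an \emph{upper} bound on $|\lambda_n|$ (equivalently a lower bound on $\lambda_n$), while the lower bound on $|\lambda_n|$ comes from interlacing; also $\lambda_3(H_i)$ is below $1$ in all three cases, though still positive enough for the argument to go through.
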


 \begin{figure}[H]
\centering 
\includegraphics[scale=0.8]{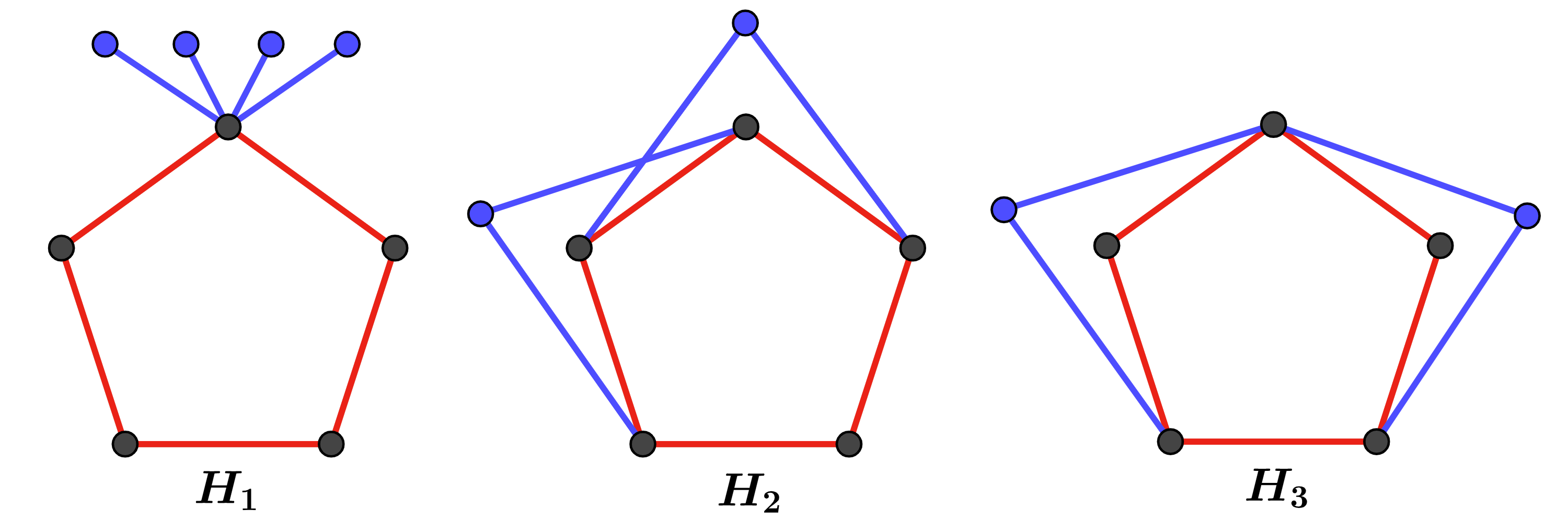} 
\label{fig-H123} 
\end{figure}

  \begin{lemma} \label{lem-T1234}
 $G$ does not contain any graph of 
 $\{T_1,T_2,T_3,T_4\}$ as an induced 
 subgraph. 
 \end{lemma}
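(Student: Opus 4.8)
\textbf{Proof proposal for Lemma \ref{lem-T1234}.}
The plan is to follow exactly the template established in the proof of Lemma \ref{lem-C5} and (implicitly) Lemma \ref{lem-H123}: for each of the four candidate graphs $T_i$, assume toward a contradiction that $T_i$ occurs as an induced subgraph of the extremal graph $G$, then use Cauchy's interlacing theorem (Lemma \ref{lemCauchy}) to transfer several eigenvalue inequalities from $A(T_i)$ to $A(G)$, and finally feed these into the triangle-counting identity of Lemma \ref{lem21} to deduce $t(G)>0$, contradicting that $G$ is triangle-free. Concretely, for a fixed $T_i$ on $s$ vertices I would first compute the spectrum of $T_i$ numerically, isolating both the large positive eigenvalues $\lambda_2(T_i),\lambda_3(T_i),\dots$ and the most negative ones $\lambda_{s}(T_i),\lambda_{s-1}(T_i),\dots$. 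Interlacing then gives $\lambda_j(G)\ge \lambda_j(T_i)$ for small $j\ge 2$ and $\lambda_{n-s+j}(G)\le\lambda_j(T_i)$ for the tail, so that each corresponding term $f(\lambda_j(G))=(\sqrt{m-2.5}+\lambda_j(G))\lambda_j(G)^2$ is controlled from below via Lemma \ref{lem-fx}, using also the trace bound $\sum_i\lambda_i^2=2m$ together with $\lambda_1(G)>\sqrt{m-2.5}$ (from \eqref{eq-G-25}) to cap the remaining squared eigenvalues and hence produce a concrete negative interval endpoint for each small negative $\lambda_{n-k}(G)$.

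The second step is the bookkeeping that turns these per-eigenvalue estimates into a positive lower bound on $t(G)$. Writing Lemma \ref{lem21} as
\[
t(G)=\frac16\sum_{i=2}^n(\lambda_1+\lambda_i)\lambda_i^2+\frac13(\lambda_1^2-m)\lambda_1,
\]
I would discard all the terms for indices $i$ not pinned down by interlacing — they are non-negative once one checks the sign of $\lambda_1+\lambda_i$ is handled, or more simply one keeps only the guaranteed-positive contributions and bounds $\frac13(\lambda_1^2-m)\lambda_1$ from below using $\sqrt{m-2.5}<\lambda_1<\sqrt{m-1.85}$ from \eqref{eq-1.85}, which makes that term at least $-\frac{2.5}{3}\lambda_1>-\frac{2.5}{3}\sqrt{m-1.85}$. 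The selected $f(\lambda_j)$ terms each contribute on the order of a positive constant times $\sqrt{m-2.5}$, so their sum dominates the single $-\Theta(\sqrt{m})$ error term once $m$ is large enough; the threshold $m\ge 258$ is precisely what is chosen to absorb all four cases uniformly. For each $T_i$ this reduces to verifying one explicit inequality of the shape $c_i\sqrt{m-2.5}-c_i'\sqrt{m-1.85}-c_i''>0$, which holds for $m\ge 258$.

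The main obstacle — and the reason the argument is delicate rather than automatic — is that, unlike the $C_7$ case where two eigenvalues exceed $1.24$, the graphs $T_1,\dots,T_4$ may have relatively few eigenvalues of large absolute value, so naively keeping only the ``obvious'' positive $f$-terms need not beat the $-\Theta(\sqrt m)$ term. The fix is to be careful about which eigenvalues of $T_i$ to exploit: one must choose the right combination of one or two moderately large positive eigenvalues together with one or two sufficiently negative ones, and crucially one must get sharp interval endpoints for the negative $\lambda_{n-k}(G)$ by peeling off the already-used eigenvalues from the trace sum $\sum\lambda_i^2=2m$ (exactly as in the $C_7$ computation, where successive bounds $\lambda_n^2\le m-4.244$, $\lambda_{n-1}^2+\lambda_n^2\le m-1.001$ were derived). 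Getting these endpoints tight enough that Lemma \ref{lem-fx} returns a usable lower bound, for each of the four graphs simultaneously, is the bulk of the work; this is why the authors relegate it to the Appendix. I expect no conceptual novelty beyond Lemma \ref{lem-C5} — the proof is four parallel instances of the same interlacing-plus-triangle-counting scheme, with the graph-specific spectra plugged in and the arithmetic checked for $m\ge 258$.
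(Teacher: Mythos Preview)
Your proposal is correct and follows essentially the same approach as the paper's own proof: for each $T_i$ one computes its spectrum, applies Cauchy interlacing to pin down several positive and several negative eigenvalues of $G$, bounds the negative ones via the trace identity $\sum\lambda_i^2=2m$ together with $\lambda_1>\sqrt{m-2.5}$, feeds everything through Lemma~\ref{lem-fx} into Lemma~\ref{lem21}, and arrives at an inequality of the exact shape $c_i\sqrt{m-2.5}-5\sqrt{m-1.85}-c_i''>0$ that holds for $m\ge 258$. The only point you leave slightly implicit, but which the paper also uses without comment, is that the discarded terms $(\lambda_1+\lambda_i)\lambda_i^2$ are indeed nonnegative because at most one eigenvalue can lie below $-\lambda_1$ (from $\sum_{i\ge 2}\lambda_i^2<m+2.5$), and that one is $\lambda_n$, which you do retain.
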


 \begin{figure}[H]
\centering 
\includegraphics[scale=0.8]{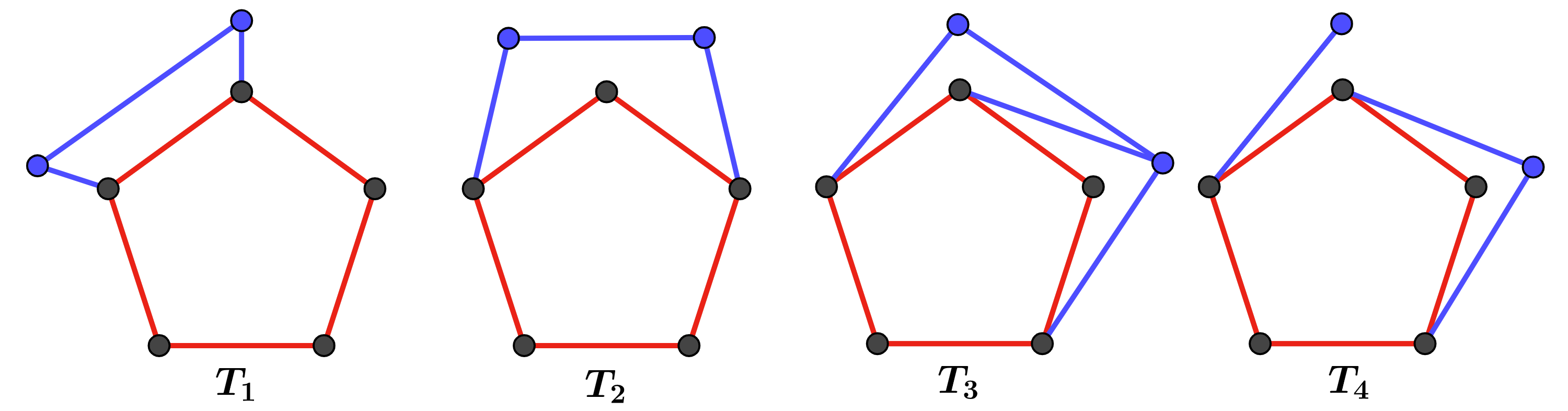} 
\label{fig-T1234} 
\end{figure}

  \begin{lemma} \label{lem-J1234}
 Any graph of 
 $\{J_1,J_2,J_3,J_4\}$ can not be an induced 
 subgraph of $G$. 
 \end{lemma}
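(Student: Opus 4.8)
\textbf{Proof proposal for Lemma \ref{lem-J1234}.}

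The plan is to follow exactly the same template as in the proof of Lemma \ref{lem-C5}, which was already adapted for Lemmas \ref{lem-H123} and \ref{lem-T1234}: assume for contradiction that one of $J_1,J_2,J_3,J_4$ appears as an induced subgraph of $G$, then use the eigenvalues of that small graph together with Cauchy's interlacing theorem (Lemma \ref{lemCauchy}) to pin down enough eigenvalues of $G$, and finally plug everything into the triangle-counting identity (Lemma \ref{lem21}) to derive $t(G)>0$, contradicting triangle-freeness. For each $J_i$, the first step would be to compute its spectrum explicitly (these are graphs on a bounded number of vertices, so this is a finite check). Interlacing then gives $\lambda_j(G)\ge \lambda_j(J_i)$ from above and $\lambda_{n-|J_i|+j}(G)\le \lambda_j(J_i)$ from below for each $j$; in particular the second largest eigenvalue of $G$ inherits a lower bound from $\lambda_2(J_i)$, and a few of the most negative eigenvalues of $G$ inherit upper bounds from the negative end of $J_i$'s spectrum.

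The key device is the function $f(x)=(\sqrt{m-2.5}+x)x^2$ from Lemma \ref{lem-fx}, together with the bound $\lambda_1(G)>\sqrt{m-2.5}$ from \eqref{eq-G-25} and $\lambda_1(G)<\sqrt{m-1.85}$ from \eqref{eq-1.85}. First I would use $\sum_i \lambda_i^2 = 2m$ to obtain, step by step, lower bounds on the magnitudes of the relevant negative eigenvalues: having already fixed lower bounds on $\lambda_1^2$ and on the squares of the interlaced positive eigenvalues, and upper bounds (from interlacing) on the individual negative $\lambda_i$, the trace constraint forces each negative eigenvalue into a narrow window, say $-\sqrt{c_i\, m + O(1)} < \lambda_i \le \lambda_i(J_i)$. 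Lemma \ref{lem-fx} then converts each such window into a lower bound $f(\lambda_i)\ge \min\{f(-\sqrt{c_i m + O(1)}),\, f(\lambda_i(J_i))\}$, which is linear in $\sqrt{m}$. Summing these contributions and the positive ones, and subtracting the term $\tfrac{1}{3}(\lambda_1^2-m)\lambda_1$ — which is at most $\tfrac{2.5}{3}\lambda_1 < \tfrac{2.5}{3}\sqrt{m-1.85}$ in absolute value by \eqref{eq-1.85} — Lemma \ref{lem21} yields
\[
t(G) > \frac{1}{6}\Bigl(\textstyle\sum_i f(\lambda_i)\Bigr) - \frac{2.5}{3}\lambda_1 > \frac{1}{6}\bigl(\alpha\sqrt{m-2.5} - \beta\sqrt{m-1.85} - \gamma\bigr)
\]
for suitable positive constants $\alpha,\beta,\gamma$ depending on $J_i$, and one checks $\alpha>\beta$ so that the right-hand side is positive for all $m\ge 258$. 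That contradiction proves no $J_i$ is an induced subgraph.

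The main obstacle I anticipate is purely computational bookkeeping rather than any conceptual difficulty: for each of the four graphs $J_1,\dots,J_4$ one must choose \emph{which} eigenvalues of the small graph to interlace against (one wants a combination of positive eigenvalues near or above $1$ and negative eigenvalues of large magnitude, so that $\sum f(\lambda_i)$ beats the $\lambda_1$ correction term), and then carefully propagate the trace bound through the chosen eigenvalues in the right order so the windows stay tight enough. The delicate point is that $f$ is not monotone on $(-\infty,0]$ — it increases then decreases with a turning point at $-\tfrac{2}{3}\sqrt{m-2.5}$ — so one must genuinely take the minimum in Lemma \ref{lem-fx} at each negative eigenvalue and verify the resulting constant is still large enough. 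Since the statement groups four graphs together, the cleanest write-up treats the graph with the weakest spectral data (smallest $\lambda_2(J_i)$ and fewest large negative eigenvalues) in full detail and then remarks that the other three only give stronger inequalities; accordingly I would relegate the bulk of these verifications to the Appendix, as the excerpt already signals is the intended organization.
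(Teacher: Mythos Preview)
Your proposal is correct and follows essentially the same approach as the paper: for each $J_i$ the paper computes its spectrum, applies Cauchy interlacing to bound several $\lambda_j(G)$ from both sides, uses the trace identity to trap the most negative eigenvalues in windows, applies Lemma~\ref{lem-fx} to lower-bound each $f(\lambda_j)$, and then Lemma~\ref{lem21} together with $\sqrt{m-2.5}<\lambda_1<\sqrt{m-1.85}$ gives $t(G)>0$. The only organizational difference is that the paper carries out all four cases $J_1,\dots,J_4$ separately in the Appendix rather than isolating a single ``weakest'' case, but the underlying computation is exactly as you describe.
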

 
 \begin{figure}[H]
\centering 
\includegraphics[scale=0.8]{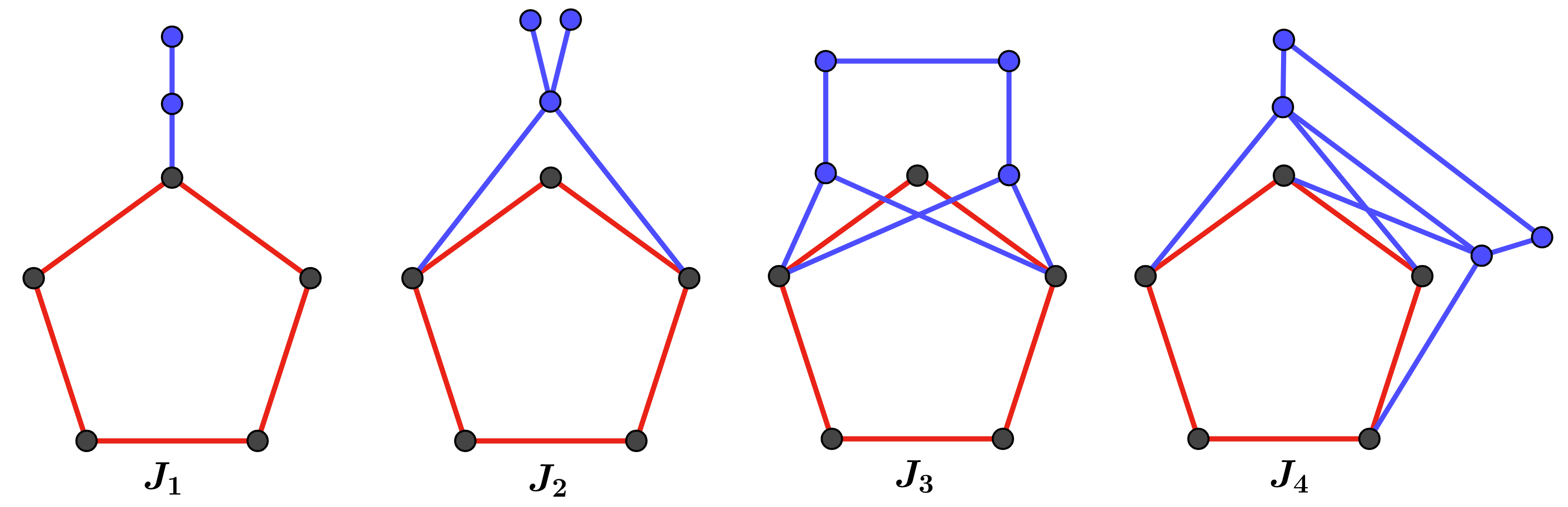} 
\label{fig-J1234} 
\end{figure}

  \begin{lemma} \label{lem-L1234}
Any graph of 
 $\{L_1,L_2,L_3,L_4\}$ can not be  an induced 
 subgraph of $G$. 
 \end{lemma}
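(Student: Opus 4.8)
The plan is to argue exactly as in the proof of Lemma~\ref{lem-C5}: assume for contradiction that one of $L_1,L_2,L_3,L_4$, call it $L_j$, occurs as an induced subgraph of the extremal graph $G$, and then derive $t(G)>0$ via Lemma~\ref{lem21}, contradicting the fact that $G$ is triangle-free. Concretely, I would first compute the spectrum of $A(L_j)$; each $L_j$ has only a handful of vertices, so this is a finite check. Since $A(L_j)$ is a principal submatrix of $A(G)$, Lemma~\ref{lemCauchy} gives, for the eigenvalues $\lambda_1\ge\cdots\ge\lambda_n$ of $G$, the bounds $\lambda_i(G)\ge\lambda_i(L_j)$ from the top of the spectrum and $\lambda_{n-k+i}(G)\le\lambda_i(L_j)$ from the bottom, where $k=|V(L_j)|$. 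In particular the few positive eigenvalues $\lambda_2,\lambda_3,\dots$ of $L_j$ beyond the Perron one, together with the two or three most negative eigenvalues of $L_j$, transfer to corresponding eigenvalues of $G$.

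The key technical point, exactly as in Lemma~\ref{lem-C5}, is that the positive interlacing bounds alone will not force $t(G)>0$, so the negative eigenvalues must be used, and for that one needs a two-sided estimate on each relevant negative eigenvalue $\lambda_\ell(G)$. The upper bound $\lambda_\ell(G)\le\lambda_i(L_j)<0$ comes from interlacing; the lower bound comes from the trace identity $\sum_{i=1}^n\lambda_i^2=2m$ together with $\lambda_1>\sqrt{m-2.5}$ from \eqref{eq-G-25} and the squares of the already-controlled eigenvalues, which confines $\lambda_\ell(G)$ to an interval $[-\sqrt{\,\cdot\,},\,\lambda_i(L_j)]$ contained in $(-\infty,0]$. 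On that interval Lemma~\ref{lem-fx} yields $f(\lambda_\ell)\ge\min\{f(\text{left endpoint}),f(\text{right endpoint})\}$, where $f(x)=(\sqrt{m-2.5}+x)x^2$, producing an explicit lower bound for each $f(\lambda_\ell)$.

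I would then apply Lemma~\ref{lem21} in the form $t(G)=\frac16\sum_{i\ge2}(\lambda_1+\lambda_i)\lambda_i^2+\frac13(\lambda_1^2-m)\lambda_1$. Since $|\lambda_i|\le\lambda_1$ for every $i$, each summand $(\lambda_1+\lambda_i)\lambda_i^2$ is nonnegative, and it is at least $f(\lambda_i)$ because $\lambda_1>\sqrt{m-2.5}$; moreover $\frac13(\lambda_1^2-m)\lambda_1>-\frac{2.5}{3}\lambda_1>-\frac{2.5}{3}\sqrt{m-1.85}$ by \eqref{eq-1.85}. Discarding all but the controlled indices gives $t(G)>\frac16\big(\sum f(\lambda_i)\big)-\frac{2.5}{3}\sqrt{m-1.85}$, and the chosen lower bounds turn the right-hand side into an expression linear in $\sqrt{m}$ with positive leading coefficient, hence positive for $m\ge 258$; this is the desired contradiction. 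Repeating the computation for each of $L_1,\dots,L_4$ completes the proof, which is why it is natural to defer it to the Appendix.

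The main obstacle I expect is bookkeeping rather than conceptual: for each $L_j$ one must choose the right set of eigenvalues to track so that the squares subtracted in the trace estimate leave enough room to keep the coefficient of $\sqrt m$ strictly positive, and one must verify that the crude threshold $m\ge 258$ absorbs all the constant terms across all four graphs simultaneously. A secondary subtlety is that some $L_j$ may have eigenvalues close to $0$, whose $f$-values are tiny, so the negative part of the spectrum really has to carry the argument, as in the $C_7$ case; should even that be insufficient for some $L_j$, one would push the interlacing one step further (using a fourth negative eigenvalue of $L_j$) or invoke Lemma~\ref{lem-C5} to locate an additional induced $C_5$ and feed its eigenvalues into the same estimate.
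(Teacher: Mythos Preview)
Your proposal is correct and follows essentially the same approach as the paper's own proof: for each $L_j$ one computes its spectrum, feeds the positive interlacing bounds on $\lambda_2,\lambda_3,\lambda_4$ and the two-sided bounds on the bottom few $\lambda_{n-k}$ (obtained via the trace identity and $\lambda_1>\sqrt{m-2.5}$) into Lemma~\ref{lem-fx}, and then Lemma~\ref{lem21} together with $\lambda_1<\sqrt{m-1.85}$ yields $t(G)>0$. The paper does exactly this case by case (and, as you anticipated, for $L_2$ it indeed uses a fourth negative eigenvalue $\lambda_{n-3}$).
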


 \begin{figure}[H]
\centering 
\includegraphics[scale=0.8]{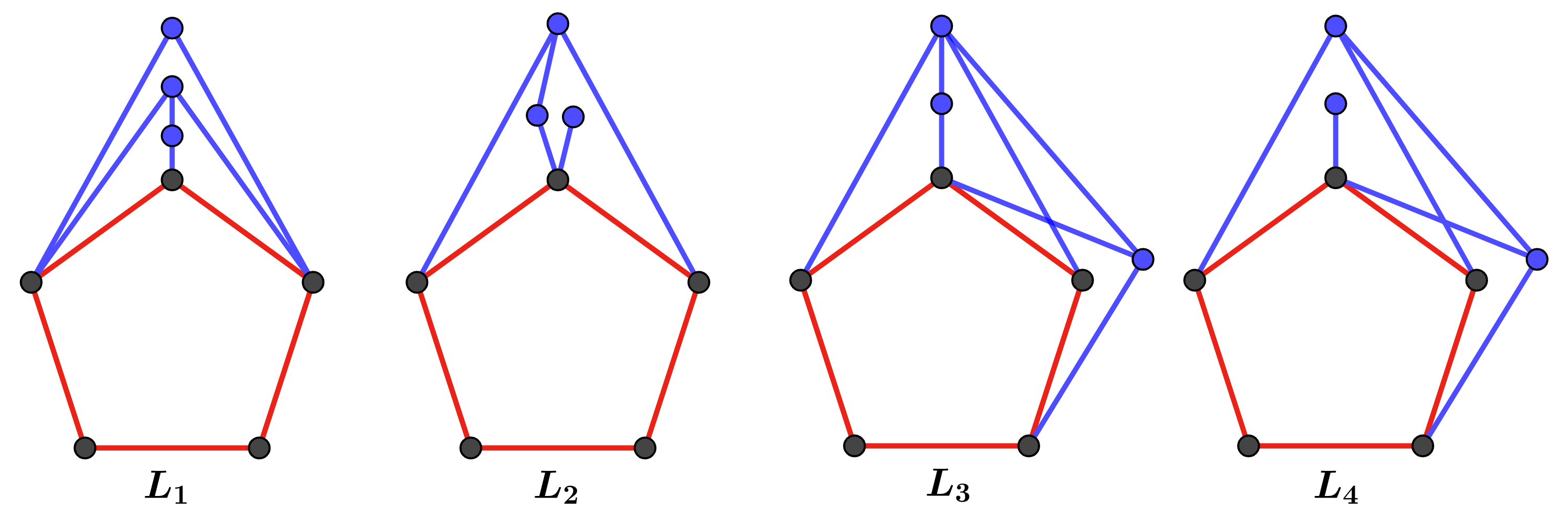} 
\label{fig-L1234} 
\end{figure}

\section{Proof of the main theorem} 

\label{sec4}

It is the time to show the proof of Theorem \ref{thm-main}.

\begin{proof}[{\bf Proof of Theorem \ref{thm-main}}]
Suppose that $G$ is a non-bipartite triangle-free graph 
with $m$ edges ($m\ge 4.7\times 10^5$ is even) 
 such that $G$ attains the maximum spectral radius. 
 Thus we have $\lambda (G)\ge \lambda (L_m)$ since $L_m$ 
 is one of the triangle-free  non-bipartite graphs. 
Our goal is to prove that  $G=Y_m$ if $\frac{m}{3} \in \mathbb{N}^*$; 
$G=T_m$ if $\frac{m-1}{3} \in \mathbb{N}^*$,     
and $G=L_m$ if $\frac{m-2}{3} \in \mathbb{N}^*$.  
First of all, we can see that 
$G$ must be connected.  
Otherwise, we can choose $G_1$ and $G_2$ as two different components, 
where $G_1$ attains the spectral radius of $G$.  
By identifying two vertices from $G_1$ and $G_2$,  respectively, 
we get a new graph with larger  spectral radius, 
which is a contradiction. 
By Lemma \ref{lem-C5}, 
we can draw the following claim.

\begin{claim} \label{claim-C5}
 $C_5$ is a shortest odd cycle in $G$. 
\end{claim}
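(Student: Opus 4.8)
\textbf{Proof proposal for Claim \ref{claim-C5}.}
The plan is to deduce the claim essentially as a direct corollary of Lemma \ref{lem-C5} together with the assumed extremality of $G$. Since $G$ is non-bipartite, it contains an odd cycle; let $s$ be the length of a shortest odd cycle in $G$. Because $G$ is triangle-free we immediately have $s\ge 5$. A standard observation is that a \emph{shortest} odd cycle must be induced: if a shortest odd cycle $C_s$ had a chord, the chord would split $C_s$ into two shorter cycles, one even and one odd, and the odd one would have length strictly less than $s$, contradicting minimality. Hence $G$ contains an induced copy of $C_s$.

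Now I would invoke Lemma \ref{lem-C5}, which asserts that the extremal graph $G$ contains no induced cycle $C_t$ for any odd $t\ge 7$. Combined with $s\ge 5$ and the fact that the induced copy of $C_s$ realizes an odd value of $s$, this forces $s=5$. Thus $C_5$ is a shortest odd cycle in $G$, which is exactly the statement of the claim.

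I do not expect any genuine obstacle here: the entire content has already been discharged in Lemma \ref{lem-C5} (whose proof uses Cauchy interlacing and the triangle-counting Lemma \ref{lem21} to rule out an induced $C_7$, and then the monotonicity of $\cos$ to rule out all larger odd induced cycles). The only thing worth spelling out carefully is the elementary "shortest odd cycle is induced'' argument, so that the hypothesis of Lemma \ref{lem-C5} (which speaks of induced cycles) can legitimately be applied. Everything else is bookkeeping: $G$ is non-bipartite by hypothesis, triangle-free by hypothesis, so $s\notin\{1,3\}$, and Lemma \ref{lem-C5} kills $s\in\{7,9,11,\dots\}$, leaving $s=5$.
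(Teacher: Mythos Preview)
Your proposal is correct and follows essentially the same approach as the paper: the paper simply states ``By Lemma~\ref{lem-C5}, we can draw the following claim,'' since Lemma~\ref{lem-C5} already contains both the observation that a shortest odd cycle is induced and the conclusion that $s=5$. Your version just makes the chord-splitting argument explicit, which is fine.
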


By Claim \ref{claim-C5}, we denote by  $S=\{u_1,u_2,u_3,u_4,u_5\}$ the set of vertices of a copy of $C_5$, 
where $u_iu_{i+1}\in E(G)$ 
and $u_5u_1\in E(G)$.   Let $N(S):=
\bigl(\cup_{u\in S} N(u) \bigr)\setminus S$ be the union of 
 neighborhoods of vertices of $S$, 
 and let $d_S(v)=|N(v)\cap S|$ be the 
 number of neighbors of $v$ in the set $S$.  
   Clearly, 
   we have $d_S(v)\in \{0,1,2\}$ for every $v\in V(G) \setminus S$. 
   Otherwise, if $d_S(v) \ge 3$, then one can find a triangle immediately, a contradiction.

\begin{claim} \label{claim-42}
$V(G) \setminus S$ does not contain a vertex with distance $3$ to $S$, 
and $V(G)\setminus S$ has at most one vertex with distance $2$ to $S$. 
\end{claim}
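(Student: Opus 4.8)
\textbf{Proof proposal for Claim \ref{claim-42}.}

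The plan is to run the same ``forbidden induced subgraph plus triangle counting'' machinery that powers Lemmas \ref{lem-C5}--\ref{lem-L1234}, applied now to the hypothetical local configurations near $S$. First I would suppose, for contradiction, that $V(G)\setminus S$ contains a vertex $w$ at distance $3$ from $S$; then there is a path $u-x-y-w$ with $u\in S$, $x,y\in N(S)$-region, and $d_S(y)=0$, $d_S(w)=0$, and $w$ is not adjacent to $x$ (distance exactly $3$). Together with the $5$-cycle $S$ this spans an induced subgraph on $8$ vertices (possibly fewer edges depending on adjacencies of $x$ to $S$), and I would check that every such induced configuration is either already excluded by Lemmas \ref{lem-H123}--\ref{lem-L1234} or can be excluded directly: compute a handful of the extremal eigenvalues of the candidate induced subgraph $F$, invoke Cauchy interlacing (Lemma \ref{lemCauchy}) to push those eigenvalues into $\lambda_2(G),\lambda_3(G),\dots$ and $\lambda_n(G),\lambda_{n-1}(G),\dots$, then feed everything into Lemma \ref{lem21} using $\sqrt{m-2.5}<\lambda_1(G)<\sqrt{m-1.85}$ from \eqref{eq-G-25} and \eqref{eq-1.85} and the auxiliary monotonicity Lemma \ref{lem-fx} to conclude $t(G)>0$, contradicting triangle-freeness. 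The key point is that a vertex at distance $3$ forces enough ``spread'' eigenvalues (in particular a second and third positive eigenvalue bounded away from $0$, coming from the long induced path attached to $C_5$) that the triangle count becomes strictly positive for $m\ge 258$.

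Next I would treat the distance-$2$ part. Suppose $V(G)\setminus S$ has two distinct vertices $w_1,w_2$ each at distance exactly $2$ from $S$. Each $w_i$ has a neighbor $x_i\in N(S)$ with $d_S(x_i)\ge 1$ and $d_S(w_i)=0$. I would split into cases according to whether $x_1=x_2$ or $x_1\ne x_2$, whether $w_1w_2\in E(G)$, and where on $S$ the neighbors of $x_1,x_2$ sit. In each case the vertex set $S\cup\{x_1,x_2,w_1,w_2\}$ (deleting coincidences) spans an induced subgraph; since $G$ is triangle-free this induced subgraph is triangle-free, and I would argue it is isomorphic to one of $H_1,H_2,H_3,T_1,\dots,T_4,J_1,\dots,J_4,L_1,\dots,L_4$ or contains $C_7$ as an induced cycle (excluded by Lemma \ref{lem-C5}), or else is a small triangle-free graph $F$ not on that list, which I handle by the same interlacing-plus-counting computation as above. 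The bookkeeping here is the bulk of the work: one must be careful that $w_i$ is \emph{not} adjacent to any vertex of $S$ (so no triangle is created prematurely and the induced subgraph is genuinely the one drawn) and that $x_i$ is not adjacent to two vertices of $S$ at distance $2$ on the cycle (else a triangle appears through $C_5$).

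The main obstacle, as usual in this circle of arguments, is the case analysis rather than any single hard inequality: one has to enumerate all induced subgraphs that a distance-$2$ (or distance-$3$) vertex can create around a fixed $C_5$, match each against the already-forbidden list from Section \ref{sec3}, and for the residual handful carry out the eigenvalue estimates carefully enough that the constant $258$ suffices. I expect the distance-$3$ case to be comparatively easy (the attached induced path $P_4$ off $C_5$ immediately gives $\lambda_2,\lambda_3$ large, and two sizeable negative eigenvalues, so Lemma \ref{lem21} kills it quickly), while the distance-$2$ case with $x_1\ne x_2$ and $w_1w_2\notin E(G)$ will be the delicate one, since the induced subgraph then has small second eigenvalue and one must lean on the negative end of the spectrum, exactly as in the $C_7$ computation in the proof of Lemma \ref{lem-C5}. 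Once both parts are done, Claim \ref{claim-42} follows, and it feeds directly into the structural analysis of the following step.
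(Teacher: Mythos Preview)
Your plan is recognisably in the spirit of Section~\ref{sec3}, but it diverges from the paper's actual proof in a way that matters. The paper does \emph{no} new eigenvalue computations in Claim~\ref{claim-42}; everything is reduced to the already-forbidden graphs from Lemmas~\ref{lem-H123} and~\ref{lem-J1234}. For the distance-$3$ part, any shortest path $w_1w_2w_3u_1$ with $w_1$ at distance $3$ forces an induced $J_1$ on seven vertices (either $\{w_2,w_3\}\cup S$ when $d_S(w_3)=1$, or $\{w_1,w_2,w_3\}\cup(S\setminus\{u_2\})$ when $d_S(w_3)=2$), so Lemma~\ref{lem-J1234} alone suffices. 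For the distance-$2$ part, the paper first uses the $J_1$-exclusion to force $d_S(v_1)=d_S(v_2)=2$, then uses $H_3$ and $H_2$ to pin down $N_S(v_2)$ and the $v_1v_2$ edge, and then $J_2$, $J_3$, $J_4$ handle the subcases where $v_1=v_2$ or $w_1w_2\in E(G)$.

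The genuine gap in your proposal is the residual case $v_1\neq v_2$, $w_1w_2\notin E(G)$. Here the paper does \emph{not} exhibit a forbidden induced subgraph and does \emph{not} run a triangle-counting estimate. Instead it invokes Lemma~\ref{lem-WXH} (the Wu--Xiao--Hong edge-shifting lemma): comparing the Perron weights at $v_1$ and $v_2$, one moves $w_1$ or $w_2$ so that both pendants hang from the same $v_i$, obtaining a triangle-free non-bipartite graph with strictly larger spectral radius, contradicting the extremality of $G$. This is a different mechanism entirely---it uses that $G$ is \emph{extremal}, not merely triangle-free---and you do not mention it. Your proposed alternative, namely computing eigenvalues of the $9$-vertex graphs ``$J_3$ minus $w_1w_2$'' and ``$J_4$ minus $w_1w_2$'' and pushing them through Lemma~\ref{lem21}, might work, but you have not checked it, and the margin in the paper's estimates is already thin (several cases need $m\ge 258$ precisely). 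At minimum you should recognise that the extremality-based move via Lemma~\ref{lem-WXH} is available and is what the paper actually uses; it disposes of this case in one line without any numerics.
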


\begin{proof} 
This claim is a consequence of Lemmas \ref{lem-H123} and 
\ref{lem-J1234}. 
Firstly, suppose on the contrary that $V(G)\setminus S$ contains 
a vertex which has distance $3$ to $S$. 
Let $w_1$ be such a vertex and 
$P_4=w_1w_2w_3u_1$ be a shortest path of length $3$. Then 
$w_2$ can not be adjacent to any vertex of $S$. 
Since $G$ is triangle-free, we know that  
neither $w_3u_2$ nor $w_3u_5$ can be an edge,
and at least one of $w_3u_3$ and $w_3u_4$ is not an edge. 
If $w_3u_3\notin E(G)$ and $w_3u_4\notin E(G)$, 
then $\{w_2,w_3\}\cup S$ 
induces a copy of $J_1$, contradicting with Lemma \ref{lem-J1234}. 
If $w_3u_3\in E(G)$ and $w_3u_4 \notin E(G)$, then  $\{w_1,w_2,w_3\}\cup 
(S \setminus \{u_2\})$ 
forms an induced copy of $J_1$ since $w_1w_3,w_1u_i $ and $w_2u_i$ are not edges of $G$,  a contradiction. 
By symmetry, $w_3u_3\notin E(G)$ and $w_3u_4\in E(G)$ 
 yield a contradiction similarly. 

Secondly, suppose on the contrary that 
$V(G)\setminus S$ contains two vertices, say $w_1,w_2$, 
which have distance $2$ to $S$. Let $v_1$ and $v_2$ be two vertices out of $S$ such that 
 $w_1\sim v_1 \sim S$ 
and $w_2\sim v_2 \sim S$. 
Since $J_1$ can not be an induced copy of $G$ and $G$ is triangle-free, 
we know that $d_S(v_1)=d_S(v_2)=2$.  
If $v_1=v_2$, then $\{w_1,w_2,v_1\} \cup S$ forms an induced copy 
of $J_2$ in $G$, we get a contradiction by Lemma \ref{lem-J1234}.  
Thus, we get $v_1\neq v_2$. 
Without loss of generality, 
we may assume that $N_S(v_1)=\{u_1,u_3\}$. 
By Lemma \ref{lem-H123}, $G$ does not contain $H_3$ as an induced subgraph, 
we get $N_S(v_2)\neq  \{u_3,u_5\}$ 
and $N_S(v_2) \neq \{u_1,u_4\}$. 
By symmetry, we have either $N_S(v_2) 
= \{u_2,u_4\}$ or $N_S(v_2)= \{u_1,u_3\}$. 
For the former case,  
since $H_2$ is not an induced subgraph of $G$ by Lemma \ref{lem-H123}, 
we get $v_1v_2 \in E(G)$. 
If $w_1w_2 \in E(G)$, then $G$ contains $J_4$ as an induced subgraph, 
which is a contradiction by Lemma \ref{lem-J1234}. 
Thus $w_1w_2 \notin E(G)$. 
By Lemma \ref{lem-WXH},  
one can compare the Perron components of $v_1$ and $v_2$, 
and then move $w_1$ and $w_2$ together, namely, 
either making $w_1$ adjacent to $v_2$, or  $w_2$ adjacent to $v_1$. 
In this process, the resulting graph remains triangle-free and non-bipartite 
as well. 
However, it has larger spectral radius than $G$, 
which contradicts with the maximality of the spectral radius of $G$. 
For the latter case, i.e., $N_S(v_1)=N_S(v_2)=\{u_1,u_3\}$. 
Since $J_3$ is not an induced copy in $G$,  
a similar argument shows $w_1w_2\notin E(G)$, and then it also leads to a contradiction. 
\end{proof}

By Claim \ref{claim-42}, we shall partition the remaining proof in two cases,  
which are dependent on whether $V(G)\setminus S$ contains a vertex with distance $2$ 
to the $5$-cycle $S$.

\medskip 
 {\bf Case 1.} Every vertex of $V(G)\setminus S$ is adjacent a vertex of $S$.

 In this case, we have $V(G)=S \cup N(S)$. For convenience, 
 we denote $N(S) = V_1\cup V_2$, 
 where $V_i=\{v\in N(S): d_S(v)=i\}$ for each $i=1,2$.   
 At the first glance, different vertices of $V_1$ can be joined to 
 different vertices of $S$. 
By Lemma \ref{lem-T1234}, $G$ does not contain $T_1$ and $T_2$ as induced subgraphs, 
 we obtain that $V_1$ is an independent set in $G$. 
Using Lemma \ref{lem-WXH}, we can move all vertices of $V_1$ together such that 
 {\it all of them are adjacent to a same vertex of $S$}, and  get a new graph 
 with larger spectral radius. 
Note that this process can keep the resulting graph being 
triangle-free and non-bipartite since $V_1$ is edge-less and 
 $S$ is still a copy of $C_5$. 
By Lemma \ref{lem-H123}, $H_1$ can not be an induced subgraph of $G$, 
then $|V_1|\le 3$.

We can fix a vertex $v\in N(S)$ and 
assume that $N_S(v)=\{u_1,u_3\}$. 
For each $w\in V(G)\setminus (S\cup \{v\})$, 
since $G$ contains no triangles and  no  
$H_3$ as an induced subgraph by Lemma \ref{lem-H123}, 
we know that 
$N_S(w)\neq \{u_3,u_5\}$ and $N_S(w)\neq \{u_4,u_1\}$. 
It is possible that $N_S(w)=\{u_1,u_3\},\{u_2,u_4\}$ or 
$\{u_5,u_2\}$. Furthermore, 
if $N_S(w)=\{u_1,u_3\}$, then $wv \notin E(G)$ since $G$  contains no  triangle; 
if $N_S(w)=\{u_2,u_4\}$, then 
$wv \in E(G)$ since $G$ contains no induced copy of 
$H_2$. 
We denote $N_{i,j} :=\{w\in V(G)\setminus S : N_S(w)=\{u_i,u_j\}\}$. Note that $G$ has no induced copy of $H_3$, 
 then at least one of the sets $N_{2,4}$ and $N_{5,2}$ is empty. 

{\bf Subcase 1.1.} 
 If both $N_{2,4}=\varnothing$ and $N_{5,2}=\varnothing$, 
then $V_2=N_{1,3}$ and $V(G) = S\cup V_1\cup N_{1,3}$. 
By Lemma \ref{lem-T1234}, $T_3$ and $T_4$ can not be induced subgraphs of $G$. 
Hence, all vertices of $V_1$ are adjacent to the vertex $u_1$ or 
$u_2$ by symmetry. 
Next, we will show  that $|V_1|\in \{1,3\}$, and then we prove that 
$V_1$ and $N_{1,3}$ form  a complete bipartite graph or an empty graph. 
  
 \begin{figure}[H]
\centering 
\includegraphics[scale=0.8]{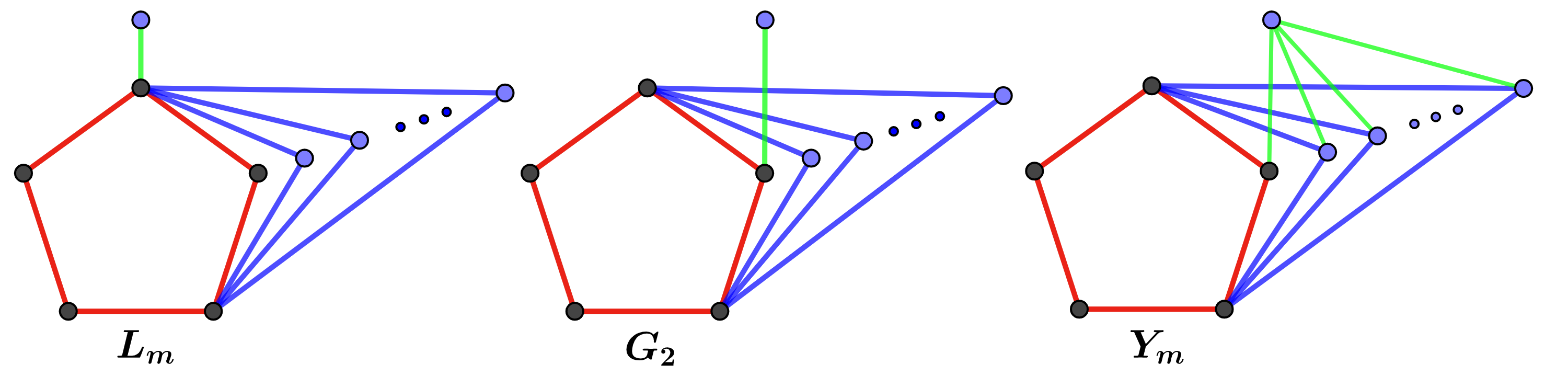} \\ 
\includegraphics[scale=0.8]{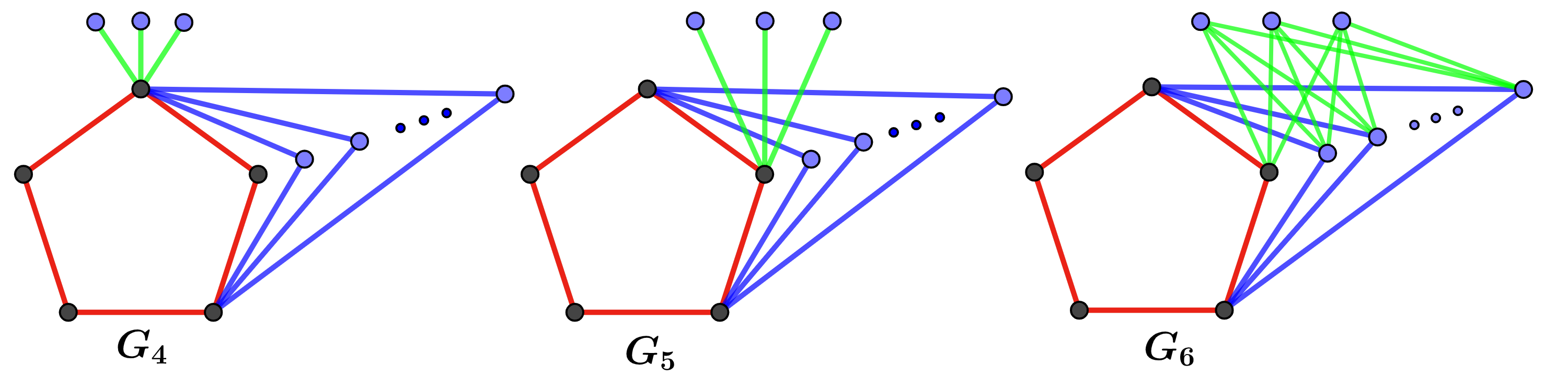}  
\caption{The structure of $G$ when $|V_1|=1$ or $|V_1|=3$.} 
\label{fig-G123} 
\end{figure}

Suppose that all vertices $V_1$ are adjacent to $u_1$. 
Then there is no edge between $V_1$ and $N_{1,3}$ since $G$ 
is triangle-free. Note that $m=5+ 2|N_{1,3}| + |V_1|$ is even, we get $|V_1|\in \{1,3\}$; 
see $L_m$ and $G_4$ in Figure \ref{fig-G123}.

If $|V_1|=1$, then $G$ is the desired extremal graph 
$L_m$;

If $|V_1|=3$, then by computation, we get 
$\lambda (G_4)$ is the largest root of 
\[  F_4(x):=x^6 - mx^4 +(\tfrac{7m}{2} -14)x^2 +(6-m)x + 9-\tfrac{3m}{2}.  \]
Clearly, we can check that  
$L(x)<F_4(x)$ for each $x\ge 1$, and so $\lambda (G_4) < \lambda (L_m)$.

Suppose that all vertices of $V_1$ are adjacent to $u_2$. 
If there is no edge between $V_1$ and $N_{1,3}$, 
then $|V_1|\in \{1,3\}$ and $G$ is isomorphic to $G_2$ or $G_5$; see
Figure \ref{fig-G123}. 
By computations or Lemma \ref{lem-WXH}, we can get $\lambda (G) 
< \lambda (L_m)$;  
If there exists an edge between $V_1$ and $N_{1,3}$, 
then we claim that $V_1$ and $N_{1,3}$ 
form a complete bipartite subgraph by Lemma \ref{lem-L1234}. 
Indeed, Lemma \ref{lem-L1234} asserts that $G$ does not contain 
$L_1$  as an induced subgraph. 
In other words, if $v\in V_1$ is a vertex which is adjacent to one vertex of $N_{1,3}$, 
then $v$ will be adjacent to all vertices of $N_{1,3}$. 
Note that $G$ does not contain $L_2$ as an induced subgraph,  
which means that other vertices of $V_1$ are also adjacent to all vertices of $N_{1,3}$. 
Observe that $m=5+2|N_{1,3}|+ |V_1|(1+|N_{1,3}|)$ is even, 
which yields that $|V_1|$ is odd, and so $|V_1|\in \{1,3\}$. 
Consequently, $G$ is isomorphic to either $Y_m$ or $G_6$; see Figure \ref{fig-G123}.

If $|V_1|=1$, then $|N_{1,3}|= \frac{m-6}{3}$ and $G=Y_m$. 
By Lemma \ref{lem-LY}, we get $\lambda (L_m) < \lambda (Y_m)$. 
Thus,  $Y_m$ is the required extremal graph 
whenever ${m}=3t$ for some even $t\in \mathbb{N}^*$.

If $|V_1|=3$, then $|N_{1,3}|= \frac{m-8}{5}$ and $\lambda (G_6)$
 is the largest root of 
\[  F_6(x):= x^4 - x^3 + (2-m) x^2 + (m-3) x +\tfrac{3m-9}{5}.   \]
It is not hard to check that $\lambda (G_6) < \lambda (L_m)$. 
Indeed, 
by calculation, we know that the largest roots of $x^2F_6(x)$  and 
$L(x)$ are located in $(\sqrt{m-3}$, $\sqrt{m-2})$. Moreover, we denote  
\[ D(x):= L(x)-x^2F_6(x)= x^5 - 2x^4 + (3-m) x^3 + (\tfrac{19m}{10} - \tfrac{26}{5})x^2 + 
(4-m)x + 2 - \tfrac{m}{2}. \]
Clearly, we can verify that $D(\sqrt{m-3}) <0$ 
and $D(\sqrt{m-2}) <0$. 
Furthermore, one can prove that 
$\frac{\mathrm{d}}{\mathrm{d}x} D(x) >0$ 
for each $x\ge \sqrt{m-3}$. 
Consequently, it leads to $D(x)<0$ for every $x\in (\sqrt{m-3}, \sqrt{m-2})$, and so $L(x) < x^2F_6(x) $, 
which yields $\lambda (G_6) < \lambda (L_m)$.

{\bf Subcase 1.2.} 
Without loss of generality, 
we may assume that  $N_{2,4} \neq \varnothing$ and $N_{5,2}= \varnothing$, then 
$N(S)= V_1\cup N_{1,3} \cup N_{2,4}$. 
By Lemma \ref{lem-H123}, $H_2$ can not be an induced subgraph of $G$. 
Thus, $N_{1,3}$ and $N_{2,4}$ induce a complete 
bipartite subgraph in $G$. 
Now, we consider the vertices of $V_1$. 
Recall that all vertices of $V_1$ are adjacent to a same vertex of $S$.  
By Lemma \ref{lem-T1234}, $G$ does not contain $T_3$ and $T_4$ as induced subgraphs. Then the vertices of $V_1$ can not be adjacent to 
$u_1,u_4$ or $u_5$. 
By Lemma \ref{lem-L1234}, we know that $L_3$ and $L_4$ can not be induced subgraph of $G$. Thus, all vertices of $V_1$ can not be adjacent to 
$u_2$ or $u_3$. To sum up, we get $V_1=\varnothing$, 
and so $N(S)=N_{1,3} \cup N_{2,4}$. 
We denote $A=N_{1,3} \cup \{u_2,u_4\}$ 
and $B=N_{2,4}\cup \{u_3,u_1\}$. 
Let $|A|=a $ and $|B|=b$.  
Then we  observe that 
$G$ is isomorphic to the subdivision of 
the complete bipartite graph $K_{a,b}$ by 
subdividing the edge $u_1u_4$ of $K_{a,b}$. 
Note that $m= ab +1$ and $a,b\ge 3$ are odd integers.  
Without loss of generality, we may assume that $a\ge b$. 

If $b=3$, then $m=3a+1$ for some $a\in \mathbb{N}^*$. 
In this case, we get $G=T_m$. 
Invoking Lemma \ref{lem-LT}, we have 
$\lambda (L_m) < \lambda (T_m)$ and thus $T_m$ 
is the desired extremal graph.

If $b\ge 5$, then $m=ab+1$ and
 $\lambda(SK_{a,b})$ is the largest root of 
\[  F_{a,b}(x):=x^5-m x^3+ (3m-2 -2a -2b)x -2m +2a +2b.\]   
Recall in (\ref{eq-Lx}) that $\lambda (L_m)$ is the largest root of 
$ L(x)$.  
We can  verify that 
\[ L(x)- xF_{a,b}(x)= -(\tfrac{m}{2} +5-2a - 2b)x^2 
+ (4+m - 2a - 2b)x -\tfrac{m}{2} +2  .  \]
Since $b\ge 5$ and $m=ab+1$, 
we get $\frac{m}{2} +5-2a-2b = \frac{1}{2}((a-4)(b-4)- 5)>0$. 
It follows that $L(x)\le xF_{a,b}(x)$ for every $x\ge 3$. 
Thus, we get  $\lambda (SK_{a,b}) < \lambda (L_m)$, as required.

\medskip 
 {\bf Case 2.} 
 There is exactly one vertex of $V(G)\setminus S$ with distance $2$ to 
 the cycle $S$. 
 Let $w_2,w_1$ be two vertices with $w_2\sim w_1 \sim S$, 
 and $N_S(w_1)=\{u_1,u_3\}$ by Lemma \ref{lem-J1234}. 
We denote $V(G) \setminus (S\cup \{w_1,w_2\}) := V_1\cup V_2$, 
where $V_i=\{v\in V(G) : v\notin S\cup \{w_1,w_2\}, d_S(v)=i \}$ for 
each $i=1,2$. Similar with the argument in Case 1, 
using Lemmas \ref{lem-T1234} and \ref{lem-WXH}, 
one can move all vertices of $V_1$ such that 
  all of them are adjacent to a same vertex of $S$. 
By Lemma \ref{lem-H123}, $H_1$ is not an induced subgraph of $G$. 
Then $|V_1| \le 3$. 

Let $v\in V_2$ be any vertex. 
We claim that $N_S(v)= \{u_1,u_3\}$. 
Indeed, Lemma \ref{lem-H123} implies that $N_S(v) \neq \{u_3,u_5\}$ 
and $N_S(v)\neq \{u_1,u_4\}$. 
If $N_S(v)=\{u_2,u_4\}$, then $w_1v\in E(G)$  
since $G$ does not contain  $H_2$ as an induced subgraph by Lemma \ref{lem-H123} again.  Consequently, $S\cup \{w_1,w_2,v\}$ forms an induced copy of $L_4$, which contradicts with Lemma \ref{lem-L1234}. 
Thus, we get $N_S(v)\neq \{u_2,u_4\}$. Similarly, we can show 
$N_S(v)\neq \{u_2,u_5\}$. 
In conclusion, we obtain $N_S(v)=\{u_1,u_3\}$ for every $v\in V_2$. 
Since  $m$ is even, we get $|V_1|\in \{0,2\}$.

First of all, suppose that $|V_1|=0$. Then $G$ is isomorphic to $G_2$,  
the graph obtained from a $C_5$ by blowing up the vertex $u_2$ 
exactly $\frac{m-4}{2}$ times and then hanging an edge to $u_2$; 
see Figure \ref{fig-G123}. 
By computations, we
$\lambda (G_2) < \lambda (L_m)$, as desired.

Now, suppose that $|V_1|=2$ 
and $V_1:=\{v_1,v_2\}$. By Lemma \ref{lem-T1234}, we know that $T_3$ and $T_4$ are not induced subgraphs of $G$. Then the vertices of $V_1$ can not be adjacent to 
$u_4$ and $u_5$.  
 By symmetry of $u_1$ and $u_3$, 
there are two possibilities, namely, all vertices of $V_1$ are adjacent to 
$u_1$ or $u_2$. 
If all vertices of $V_1$ are adjacent to $u_1$, then 
$v_1w_2\notin E(G)$ and $v_2w_2\notin E(G)$  
since $T_1$ can not be an induced subgraph of $G$ by Lemma \ref{lem-T1234}. 
By comparing the Perron components of 
$u_1$ and $w_1$, one can move $v_1,v_2$ and $w_2$ together 
using Lemma \ref{lem-WXH}. 
Thus, $G$ is isomorphic to $G_4$ or $G_5$ in Figure \ref{fig-G123}. 
If all vertices of $V_1$ are adjacent to $u_2$, 
then $v_1w_2\notin E(G)$ and $v_2w_2\notin E(G)$  
since $J_3$ is not an induced subgraph in $G$ by Lemma \ref{lem-J1234}. 
A similar argument shows that  $G$ is isomorphic to $G_5$ 
in Figure \ref{fig-G123}. 
By direct computations, we can obtain $\lambda (G_4) < \lambda (L_m) $ 
and $\lambda (G_5) < \lambda (L_m)$. 
This completes the proof. 
\end{proof}

\section{Concluding remarks} 

\label{sec5}
Although we have solved Question \ref{ques-ZS} for every $m\ge 4.7\times 10^5$, 
our proof requires a lot of calculations of eigenvalues.  
As shown in Figure \ref{fig-LFP}, 
there are three kinds extremal graphs depending on 
$m\, (\mathrm{mod}~3) \in \{0,1,2\}$. 
Thus, it seems unavoidable to make calculations  and 
comparisons 
among the spectral radii of these three graphs. 
Unlike the odd case in Theorem \ref{thmZS2022}, 
the bound $\beta (m)$ is sharp for all odd integers $m\in \mathbb{N}$. For the even case, 
Theorem \ref{thm-main} presents all extremal graph for $m\ge 4.7 \times 10^5$. 
We do not try our best to optimize the lower bound on $m$.  
In addition, for $m\in \{6,8,10\}$, 
Lemma \ref{lem-L-m} gives $\lambda (L_{m}) > \sqrt{m-2}$. 
Using a result in \cite[Theorem 5]{Wang2022DM}, 
we can prove that $Y_6, L_8$ and $T_{10}$ are  extremal graphs 
when $m\in \{6,8,10\}$, respectively. 
In view of this evidence, it is possible 
to find a new proof of Question \ref{ques-ZS} to 
characterize the extremal graphs for every $m\ge 12$.

 The blow-up of a graph $G$ is a new graph obtained from $G$ by replacing each vertex $v\in V(G)$ with an independent set $I_v$, 
and for two vertices $u,v\in V(G)$, we add all edges between $I_u$ and $I_v$ 
whenever $uv \in E(G)$.
 It was proved in \cite{LNW2021,Niki2021} that if 
 $G$ is a triangle-free graph with $m\ge 2$ edges, 
 then $\lambda_1^2(G) + \lambda_2^2(G)\le m$, 
 where the equality holds if and only if $G$
  is a blow-up of a member of the family $ \mathcal{G} = \{P_2 \cup K_1, 2P_2 \cup K_1, P_4 \cup K_1, P_5 \cup K_1\}$. 
  This result confirmed the base case of a conjecture of 
  Bollob\'{a}s and Nikiforov \cite{BN2007jctb}. 
  Observe that all  extremal graphs in this result are bipartite graphs. Therefore, it is possible to consider 
  the maximum of $\lambda_1^2(G) + \lambda_2^2(G)$ 
in which $G$ is triangle-free and non-bipartite.

The extremal problem was also studied for 
non-bipartite triangle-free graphs with given number of vertices. 
We write $SK_{s,t}$ for the graph obtained from
 the complete bipartite graph $K_{s,t}$ by subdividing an edge.   
In 2021, Lin, Ning and Wu \cite{LNW2021} proved that 
if $G$ is a non-bipartite triangle-free graph on $n$ vertices, then 
\begin{equation} \label{eq-n-vert}
  \lambda (G) \le 
\lambda \bigl(SK_{\lfloor \frac{n-1}{2}\rfloor,\lceil \frac{n-1}{2}\rceil} 
\bigr),  
\end{equation}
and equality holds if and only if $G=SK_{\lfloor \frac{n-1}{2}\rfloor,\lceil \frac{n-1}{2}\rceil}$. 
Comparing this result with Theorem \ref{thmZS2022},  
one can see that the extremal graphs with given order and size are 
extremely different although both of them are subdivisions of complete bipartite graphs.  
Roughly speaking, the former is nearly balanced, but the latter is exceedingly unbalanced.  

Later, Li and Peng \cite{LP2022second}  extended (\ref{eq-n-vert})  to the non-$r$-partite $K_{r+1}$-free  graphs with $n$ vertices.  
Notice that the extremal graph in (\ref{eq-n-vert})
has many copies of $C_5$. 
 There is another way to extend (\ref{eq-n-vert}) 
 by considering the non-bipartite graphs on $n$ vertices without any copy of 
 $\{C_3,C_5,\ldots ,C_{2k+1}\}$ where $k\ge 2$. 
 This was done 
 by Lin and Guo \cite{LG2021} as well as Li, Sun and Yu
 \cite{LSY2022} independently. 
Subsequently, the corresponding spectral problem for graphs with $m$ edges was  studied in \cite{SL2023, LP2022oddcycle, LLH2022}. 
 However, the extremal graphs in this setting  
 can be achieved only for odd $m$. Hence, we propose the following question for interested readers\footnote{We believe intuitively that the spectral extremal graphs with even size are perhaps constructed from 
 those in Figure \ref{fig-LFP} by `replacing' the red copy of $C_5$ with a longer odd cycle $C_{2k+3}$.}.

\begin{question}
For even $m$, what is the extremal graph 
attaining the maximum spectral radius over all 
non-bipartite $\{C_3,C_5,\ldots ,C_{2k+1}\}$-free graphs with $m$ edges?
\end{question}

We write $q(G)$ for the 
  signless Laplacian spectral radius, i.e., 
 the largest eigenvalue of 
 the {\it signless Laplacian matrix}  $Q(G)=D(G) + 
 A(G)$, where $D(G)= \mathrm{diag} (d_1,\ldots ,d_n)$ 
 is the degree diagonal matrix and 
 $A(G)$ is the adjacency matrix.   
A theorem of He, Jin and Zhang \cite{HJZ2013} implies that if $G$  is a triangle-free graph on $n$ vertices, 
then $q(G) \le n$, with equality if and only if $G$ is a complete bipartite graph (need not be balanced). This result can also be viewed as a spectral version of Mantel's theorem. 
It is worth mentioning that  Liu, Miao and Xue \cite{LMX2022} 
characterized the maximum signless Laplacian spectral radius among 
all non-bipartite triangle-free graphs with given order $n$ and size $m$, respectively. Fortunately, the corresponding extremal graphs 
are independent of the parity of $m$. 
Soon after, they \cite{MLX2022} also provided the extensions for graphs 
without any copy of $\{C_3,C_5,\ldots ,C_{2k+1}\}$.


\subsection*{Acknowledgements}
This work was supported by  NSFC 
(Grant No. 11931002 and 12271527) 
and Natural Science Foundation of Hunan
Province (Grant No. 2020JJ4675 and 2021JJ40707). 
We thank the referees for their careful review and valuable suggestions 
which improved the presentation.

\frenchspacing

\appendix

\section{Proofs of Lemmas \ref{lem-H123}, \ref{lem-T1234}, 
\ref{lem-J1234} and \ref{lem-L1234}}

In the appendix, we shall provide the detailed proof of some lemmas 
in Section \ref{sec3}. 
 
 \begin{proof}[{\bf Proof of Lemma \ref{lem-H123}}]
 Suppose on the contrary that $G$ contains $H_i$ 
 as an induced subgraph for some $i\in \{1,2,3\}$. 
 To obtain a contradiction, we shall show that $t(G)>0$ 
 by using Lemma \ref{lem21}. 
  The eigenvalues of graphs $H_1,H_2$ and $H_3$ can be seen in Table \ref{tab-H123}. 
 
 \begin{table}[H]
\centering 
\begin{tabular}{cccccccccc}
\toprule
    & $\lambda_1$  & $\lambda_2$  &  $\lambda_3$ 
    &  $\lambda_4$ &  $\lambda_5$ &  $\lambda_6$ & $\lambda_7$  & $\lambda_8$ & $\lambda_9$ \\ 
\midrule 
$H_1$ & 2.578  &  1.373 & 0.618 & 0 & 0& 0&  $-0.451$ & $-1.618$ & $-2.501$
 \\ 
$H_2$ & 2.641  &  1 &  0.723 & 0.414 & $-0.589$ & $-1.775$ & $-2.414$  \\ 
$H_3$ & 2.681 & 1 & 0.642 & 0 & 0 & $-2$ & $-2.323$   \\ 
\bottomrule 
\end{tabular} 
\caption{Eigenvalues of $H_1,H_2$ and $H_3$.} 
\label{tab-H123} 
\end{table}
 
 First of all, 
 we consider the case that $H_1$ is an induced subgraph 
 in $G$. 
 The Cauchy interlacing theorem implies 
 $\lambda_{n-9+i}(G) \le \lambda_i(H_1) \le \lambda_i (G)
 $ for every $i\in \{1,2,\ldots ,9\}$. 
 We denote $\lambda_i=\lambda_i(G)$ for short. 
Obviously, we have 
 \[  f(\lambda_2 ) \ge f(1.371)\ge 1.879\sqrt{m-2.5} +2.576 \]
 and 
 \[ f(\lambda_3 ) \ge f(0.618)\ge 0.381\sqrt{m-2.5} + 0.236.  \]
 Moreover, for each $i\in \{4,5,6\}$, we know that 
 $\lambda_i\ge 0$, 
 which gives $f(\lambda_i) \ge 0$. 
Next, we shall consider the negative eigenvalues of $G$. 
 The Cauchy interlacing theorem implies 
 $\lambda_{n-2} \le \lambda_7(H_1)=-0.451$ 
 and $\lambda_{n-1} \le \lambda_8(H_1)=-1.618$ 
 and $\lambda_n \le \lambda_9 (H_1)=-2.501$. 
Moreover, we get from Lemma \ref{lem-L-m} that 
$\lambda_1 \ge \lambda (L_m) > \sqrt{m-2.5}$ and 
$\lambda_n^2  
 \le 2m- (\lambda_1^2 + \lambda_2^2 + \lambda_3^2 + \lambda_{n-2}^2 + \lambda_{n-1}^2 ) 
 \le 2m-(m-2.5+5.091)=m-2.591$, which implies $-\sqrt{m-2.591} < \lambda_n \le 
 -2.501$. By Lemma \ref{lem-fx}, we have 
 \[  f(\lambda_n ) \ge 
 \min\{ f(-\sqrt{m-2.591}), f(-2.501)\} > 0.04 \sqrt{m-2.5}. \]
Since $\lambda_{n-1}^2 + \lambda_n^2  
\le 2m- (\lambda_1^2 +\lambda_2^2 + 
\lambda_3^2 + \lambda_{n-2}^2 ) 
< m+0.026$ and $\lambda_{n-1}^2 \le \lambda_n^2$, 
we get $-\sqrt{(m+0.026)/2}< \lambda_{n-1} \le -1.618$. 
 By Lemma \ref{lem-fx},  we get 
 \[  f(\lambda_{n-1}) \ge 
 \min\{ f(-\sqrt{(m+0.026)/{2}}), f(-1.618)\} > 2.617 \sqrt{m-2.5} - 4.235. \] 
 Moreover, we have $-\sqrt{{(m+0.23)}/{3}}<\lambda_{n-2}\le -0.451$ and then 
 \[ f(\lambda_{n-2}) \ge \min \{f(-\sqrt{({m+0.23})/{3}}), 
 f(-0.451)\} >0.203\sqrt{m-2.5} -0.091.  \]
 Theorem \ref{thmZS2022} and  Lemma \ref{lem-beta-m} imply 
\[   \lambda_1<  \beta (m)<\sqrt{m-1.85}  .\] 
 By Lemma \ref{lem21}, we obtain  
\begin{align*} 
t(G)  &> \frac{1}{6}(f(\lambda_2) + f(\lambda_3) + f(\lambda_{n-2})
+f(\lambda_{n-1}) + f(\lambda_n )) - \frac{2.5}{3} \lambda_1 \\ 
&>\frac{1}{6}(5.12\sqrt{m-2.5}-5\sqrt{m-1.85} -1.514)>0,
\end{align*}
where the last inequality holds for $m\ge 188$. This is a contradiction. 

Second, 
assume that $H_2$ is an induced subgraph of $G$.   
 Then Cauchy interlacing theorem gives 
$\lambda_2 \ge 1, \lambda_3\ge 0.723$ and 
$\lambda_4 \ge 0.414$. Similarly, we get 
\[ \begin{aligned} 
f(\lambda_2) &\ge f(1)= \sqrt{m-2.5} +1, \\ 
 f(\lambda_3) &\ge f(0.723) \ge 0.522 \sqrt{m-2.5} +0.377  
\end{aligned} \] 
and 
\[ f(\lambda_4) \ge f(0.414)\ge 0.171\sqrt{m-2.5} + 0.07.  \]
The negative eigenvalues of $H_2$ imply that 
$\lambda_{n-2}\le -0.589$, $\lambda_{n-1}\le -1.775$ 
and $\lambda_n \le -2.414$. 
As $\lambda_n^2 \le 2m- (\lambda_1^2+\lambda_2^2 + \lambda_3^2 + 
\lambda_4^2 + \lambda_{n-2}^2 +\lambda_{n-1}^2) 
< 2m - (m-2.5 + 5.191)=m-2.691$, 
 we  get  $-\sqrt{m-2.691} \le \lambda_n \le -2.414$.   
 Lemma \ref{lem-fx} gives 
\[  f(\lambda_n) \ge \min\{ f(-\sqrt{m-2.691}), f(-2.414)\} 
> 0.09\sqrt{m-2.5}. \] 
In addition, 
we have $-\sqrt{({m+0.459})/{2}} \le \lambda_{n-1} \le -1.775$ and 
\[ f(\lambda_{n-1}) \ge \min\{ f(-\sqrt{{(m+ 0.459)}/{2}}), f(-1.775)\} 
> 3.15 \sqrt{m-2.5} - 5.592 .  \]
 Moreover, we get 
$ \sqrt{(m+0.805)/3}<\lambda_{n-2} \le -0.589$ and 
\[  f(\lambda_{n-2}) \ge 
\min\{f(-\sqrt{(m+0.805)/3}), f(-0.589)\} > 0.346\sqrt{m-2.5} - 0.204.  \]  
Using  Lemma \ref{lem21} and $\lambda_1 < \beta (m)<\sqrt{m-1.85}$, 
 we obtain 
\begin{align*} 
t(G)  &> \frac{1}{6}(f(\lambda_2) + f(\lambda_3) + f(\lambda_4) + 
f(\lambda_{n-2}) + f(\lambda_{n-1}) + f(\lambda_n )) - \frac{2.5}{3} \lambda_1 \\
&> \frac{1}{6}(5.279\sqrt{m-2.5}-5\sqrt{m-1.85} -4.349)>0, 
\end{align*}
where the last inequality holds for $m\ge 258$, 
which is also a contradiction. 

Finally, 
if $H_3$ is an induced subgraph of $G$, 
 then we get $\lambda_2\ge 2$ and $\lambda_3\ge 0.642$. 
 Thus  
\[  f(\lambda_2) \ge f(1)=\sqrt{m-2.5} +1 \]
and 
\[  f(\lambda_3) \ge f(0.642) \ge 0.412\sqrt{m-2.5} +0.264. \] 
Moreover, Cauchy interlacing theorem gives $\lambda_{n-1}\le -2$ 
and $\lambda_n \le -2.323$. 
Since $\lambda_n^2\le 2m-(\lambda_1^2+\lambda_2^2+\lambda_3^2 + \lambda_{n-1}^2)< 2m-(m-2.5 +5.412)=m-2.912$, we get 
$-\sqrt{m-2.912} < \lambda_n \le -2.323$. Then 
\[  f(\lambda_n) \ge \min\{ f(-\sqrt{m-2.912}), f(-2.323)\} \ge 
0.2\sqrt{m-2.5}.  \]
Similarly,  
we have $-\sqrt{(m+1.087)/2} < \lambda_{n-1}\le -2$ and 
\[  f(\lambda_{n-1}) \ge \min\{ f(-\sqrt{(m+1.087)/2}), f(-2)\} 
\ge 4\sqrt{m-2.5} -8. \]
Combining Lemma \ref{lem21} with $\lambda_1 < \sqrt{m-1.85}$, 
we get 
\begin{align*} 
t(G)  &> \frac{1}{6}(f(\lambda_2) + f(\lambda_3) + 
f(\lambda_{n-1}) + f(\lambda_n )) - \frac{2}{3} \lambda_1 \\
&> \frac{1}{6}( 5.612\sqrt{m-2.5}-5\sqrt{m-1.85} -6.736 )>0, 
\end{align*}
where the last inequality holds for $m\ge 162$, which is a contradiction. 
 \end{proof}

Using the similar method as in the proofs of Lemmas \ref{lem-C5} and \ref{lem-H123}, 
we can prove Lemmas \ref{lem-T1234}, \ref{lem-J1234} and \ref{lem-L1234} as well. 
For simplicity, we next present a brief sketch only.  

\begin{proof}[{\bf Proof of Lemma \ref{lem-T1234}}] 
First of all, the eigenvalues of $T_1,\ldots ,T_4$ can be given as below. 

\begin{table}[H]
\centering 
\begin{tabular}{cccccccc}
\toprule
    & $\lambda_1$  & $\lambda_2$  &  $\lambda_3$ 
    &  $\lambda_4$ &  $\lambda_5$ &  $\lambda_6$ & $\lambda_7$   \\ 
\midrule
 $T_1$ & 2.377 & 1.273 & 0.801 & 0 & $-0.554$ & $-1.651$ &
 $-2.246$ \\ 
$T_2$ & 2.342  &  1 & 1 & 0.470 & $-1$ &  $-1.813$ 
& $-2$  \\ 
$T_3$ & 2.641  &  1 &  0.723 & 0.414 & $-0.589$ & 
$-1.775$ & $-2.414$   \\ 
$T_4$ & 2.447 & 1.176 & 0.656 & 0 & $-0.264$ & $-1.832$ & $-2.183$    \\ 
\bottomrule 
\end{tabular} 
\caption{Eigenvalues of $T_1,T_2,T_3$ and $T_4$.} 
\label{tab-T1234} 
\end{table}

Suppose on the contrary that $T_1$ is an induced subgraph of $G$. 
Then Lemma \ref{lemCauchy} gives 
$\lambda_2 \ge 1.273$ and $\lambda_3 \ge 0.801$. 
Thus, we get  
\[  f(\lambda_2) \ge f(1.273) \ge 1.62\sqrt{m-2.5} + 2.062  \]
and 
\[ f(\lambda_3) \ge f(0.801) \ge 0.641 \sqrt{m-2.5} + 0.513.  \]
Moreover, using the same technique in Lemma \ref{lem-H123}, 
the negative eigenvalues of $T_1$ implies 
$\lambda_n^2 \le 2m- (\lambda_1^2 + \lambda_2^2+ \lambda_3^2 
+ \lambda_{n-2}^2+ \lambda_{n-1}^2) < 
2m- (m-2.5 + 5.299)=m-2.799$, and so 
$-\sqrt{m-2.799}<\lambda_n \le -2.246$. By Lemma \ref{lem-fx}, 
it follows that 
\[  f(\lambda_n) \ge \min\{f(-\sqrt{m-2.799}), 
f(-2.246)\} > 0.14 \sqrt{m-2.5}. \]
Similarly, we can get 
\[  f(\lambda_{n-1}) \ge f(-1.651)\ge 
2.725 \sqrt{m-2.5} - 4.5 \] 
and 
\[ f(\lambda_{n-2}) \ge f(-0.554) \ge 0.306 \sqrt{m-2.5} - 0.17.  \] 
Using  Lemma \ref{lem21} and $\lambda_1 < \beta (m)<\sqrt{m-1.85}$, 
 we obtain   
\begin{align*} 
t(G)  &> \frac{1}{6}(f(\lambda_2) + f(\lambda_3) +
f(\lambda_{n-2}) + f(\lambda_{n-1}) + f(\lambda_n )) - \frac{2.5}{3} \lambda_1 \\
&> \frac{1}{6}(5.432\sqrt{m-2.5}-5\sqrt{m-1.85} -2.095)>0, 
\end{align*}
which is a contradiction. Thus, $G$ does not contain $T_1$ 
as an induced subgraph. 

Suppose on the contrary that $G$ contains $T_2$ as an induced subgraph. 
Then Lemma \ref{lemCauchy} implies 
$\lambda_2\ge 1, \lambda_3\ge 1 $ and $\lambda_4\ge 0.47$. 
Thus, we have 
\[ \begin{aligned}
f(\lambda_2)&\ge f(1)= \sqrt{m-2.5} +1,  \\
 f(\lambda_3) &\ge f(1)= \sqrt{m-2.5} +1  
\end{aligned} \] 
and 
\[  f(\lambda_4) \ge f(0.47) \ge 0.22\sqrt{m-2.5} + 0.103. \]
Moreover, we have   
$ -\sqrt{m-4.01} < \lambda_n \le -2$. Then Lemma \ref{lem-fx} leads to 
\[  f(\lambda_n) \ge \min\{ f(-\sqrt{m-4.01} ), f(-2)\} \ge 
0.7\sqrt{m-2.5}.  \]
In addition, we have 
\[  f(\lambda_{n-1}) \ge f(-1.813) \ge 
3.28 \sqrt{m-2.5} - 5.959 \]
and 
\[  f(\lambda_{n-2}) \ge f(-1) = \sqrt{m-2.5} -1. \]
By  Lemma \ref{lem21} and $\lambda_1 < \beta (m)<\sqrt{m-1.85}$, 
 it follows that  
\begin{align*} 
t(G)  &> \frac{1}{6}(f(\lambda_2) + f(\lambda_3) + f(\lambda_4) + 
f(\lambda_{n-2}) + f(\lambda_{n-1}) + f(\lambda_n )) - \frac{2.5}{3} \lambda_1 \\
&> \frac{1}{6}(7.2\sqrt{m-2.5}-5\sqrt{m-1.85} -4.856)>0, 
\end{align*}
 a contradiction. So $G$ does not contain $T_2$ 
as an induced subgraph.

Suppose on the contrary that $T_3$ is an induced subgraph of $G$. 
Using Lemma \ref{lemCauchy}, we obtain 
$\lambda_2\ge 1, \lambda_3\ge 0.723$ and $\lambda_4 \ge 0.414$. 
Then 
\begin{equation*}
\begin{aligned} 
 f(\lambda_2) &\ge f(1)=\sqrt{m-2.5} +1, \\
 f(\lambda_3) &\ge f(0.723) \ge 0.522 \sqrt{m-2.5} + 0.377 
 \end{aligned}
 \end{equation*}
 and 
 \[  f(\lambda_4) \ge f(0.414) \ge 0.171\sqrt{m-2.5} +0.07. \]
 Moreover, we can get 
 $-\sqrt{m-2.695} \le \lambda_n \le -2.414$. By Lemma \ref{lem-fx}, 
 we have 
 \[  f(\lambda_n) \ge \min\{f(-\sqrt{m-2.695} ), f(-2.414)\} 
 \ge 0.08 \sqrt{m-2.5}.  \]
Similarly, we obtain 
\[  f(\lambda_{n-1}) \ge f(-1.775) \ge 3.15 \sqrt{m-2.5} - 5.592  \]
and 
\[  f(\lambda_{n-2}) \ge f(-0.589) \ge 0.346 \sqrt{m-2.5} - 0.204. \]
Consequently, Lemma \ref{lem21} and $\lambda_1 < \beta (m)<\sqrt{m-1.85}$ 
gives 
\begin{align*} 
t(G)  &> \frac{1}{6}(f(\lambda_2) + f(\lambda_3) + f(\lambda_4) + 
f(\lambda_{n-2}) + f(\lambda_{n-1}) + f(\lambda_n )) - \frac{2.5}{3} \lambda_1 \\
&> \frac{1}{6}(5.269\sqrt{m-2.5}-5\sqrt{m-1.85} -4.349)>0, 
\end{align*}
where the last inequality holds for $m\ge 276$. 
This leads to a contradiction. Hence  $T_3$ 
can not be an induced subgraph of $G$.  

Suppose on the contrary that $T_4$ is an induced subgraph of $G$. 
Applying Lemma \ref{lemCauchy}, we obtain 
$\lambda_2 \ge 1.176$ and $\lambda_3 \ge 0.656$. Thus 
\[  f(\lambda_2) \ge f(1.176) \ge 1.382 \sqrt{m-2.5} +1.626 \]
and 
\[  f(\lambda_3) \ge f(0.656) \ge 0.43 \sqrt{m-2.5} + 0.282.  \]
Moreover, we have $-\sqrt{m-2.741} \le \lambda_n \le -2.183$. 
Lemma \ref{lem-fx} implies 
\[  f(\lambda_n ) \ge \min \{f(-\sqrt{m-2.741}), 
f(-2.183)\} \ge 0.1 \sqrt{m-2.5}. \]
Similarly, one can get 
\[  f(\lambda_{n-1}) \ge f(-1.832) \ge 3.356\sqrt{m-2.5} - 6.148 \] 
and 
\[ f(\lambda_{n-2}) \ge f(-0.264) \ge 0.069 \sqrt{m-2.5} - 0.018. \]
Finally, combining Lemma \ref{lem21} with $\lambda_1 < \beta (m)<\sqrt{m-1.85}$, we obtain 
\begin{align*} 
t(G)  &> \frac{1}{6}(f(\lambda_2) + f(\lambda_3) + 
f(\lambda_{n-2}) + f(\lambda_{n-1}) + f(\lambda_n )) - \frac{2.5}{3} \lambda_1 \\
&> \frac{1}{6}(5.337\sqrt{m-2.5}-5\sqrt{m-1.85} -4.258)>0, 
\end{align*}
 a contradiction. Therefore  $T_4$ 
can not be an induced subgraph of $G$.  
\end{proof}

The proofs of Lemmas \ref{lem-J1234} and \ref{lem-L1234} can 
proceed in a similar way.

\begin{proof}[{\bf Proof of Lemma \ref{lem-J1234}}]
The eigenvalues of graphs $J_1,\ldots ,J_4$ can be computed
 as follows. 
 
\begin{table}[H]
\centering 
\begin{tabular}{cccccccccc}
\toprule
    & $\lambda_1$  & $\lambda_2$  &  $\lambda_3$ 
    &  $\lambda_4$ &  $\lambda_5$ &  $\lambda_6$ & $\lambda_7$  & $\lambda_8$ & $\lambda_9$ \\ 
\midrule 
$J_1$ & 2.151 & 1.268 & 0.618 & 0.420 & $-0.895$ & $-1.618$ &
 $-1.944$ \\  
$J_2$ & 2.554  &  1.223 & 0.618 & 0.565 & 0&  $-0.942$ & $-1.618$ & $-2.401$
 \\ 
$J_3$ & 2.900  &  1.362 &  0.690 & 0.618 & 0.618 & $-0.273$ & $-1.618$ & $-1.618$ & $-2.679$  \\ 
$J_4$ & 3.082 & 1.380 & 0.827 & 0.670 & 0.338 & $-0.406$ & $-1.209$ & $-1.726$ & $-2.956$   \\ 
\bottomrule 
\end{tabular} 
\caption{Eigenvalues of $J_1,J_2,J_3$ and $J_4$.} 
\label{tab-J1234} 
\end{table}

If $J_1$ is an induced subgraph of $G$, 
then Lemma \ref{lemCauchy} implies 
$\lambda_2 \ge 1.268, \lambda_3 \ge 0.618$ 
and $\lambda_4\ge 0.42$. 
Moreover, the negative eigenvalues of $J_1$ 
gives $\lambda_{n-2} \le -0.895$ and 
 $\lambda_{n-1}\le -1.618$. 
Then  $-\sqrt{m-3.085} \le \lambda_n \le -1.944$ and 
\[ f(\lambda_n) \ge
\min\{f(-\sqrt{m-3.085}), f(-1.944)\} > 0.25\sqrt{m-2.5}.  \] 
By Lemma \ref{lem21} and $\lambda_1 < \beta (m)<\sqrt{m-1.85}$, we obtain 
\begin{align*} 
t(G)  &> \frac{1}{6}(f(\lambda_2) + f(\lambda_3) + f(\lambda_4) + 
f(\lambda_{n-2}) + f(\lambda_{n-1}) + f(\lambda_n )) - \frac{2.5}{3} \lambda_1 \\
&> \frac{1}{6}(5.835\sqrt{m-2.5}-5\sqrt{m-1.85} - 2.603)>0, 
\end{align*}
 a contradiction. Thus $J_1$ 
can not be an induced subgraph of $G$.  

If $J_2$ is an induced subgraph of $G$, 
then Lemma \ref{lemCauchy} implies 
$\lambda_2 \ge 1.223, \lambda_3 \ge 0.618$ 
and $\lambda_4\ge 0.565$. 
In addition, the negative eigenvalues of $J_2$ 
gives $\lambda_{n-2} \le -0.942$ and 
 $\lambda_{n-1}\le -1.618$. 
Then  $-\sqrt{m-3.202} \le \lambda_n \le -2.401$ and 
\[ f(\lambda_n) \ge
\min\{f(-\sqrt{m-3.202}), f(-2.401)\} > 0.3\sqrt{m-2.5}.  \] 
Applying Lemma \ref{lem21} and $\lambda_1 < \beta (m)<\sqrt{m-1.85}$, we have  
\begin{align*} 
t(G)  &> \frac{1}{6}(f(\lambda_2) + f(\lambda_3) + f(\lambda_4) + 
f(\lambda_{n-2}) + f(\lambda_{n-1}) + f(\lambda_n )) - \frac{2.5}{3} \lambda_1 \\
&> \frac{1}{6}(6.002\sqrt{m-2.5}-5\sqrt{m-1.85} - 2.826)>0, 
\end{align*}
 a contradiction. So $J_2$ 
can not be an induced subgraph of $G$. 

If $J_3$ is an induced subgraph of $G$, 
then Lemma \ref{lemCauchy} implies 
$\lambda_2 \ge 1.362, \lambda_3 \ge 0.690$, 
 $\lambda_4\ge 0.618$ and $\lambda_5\ge 0.618$. 
Additionally, the negative eigenvalues of $J_3$ 
gives $\lambda_{n-3} \le -0.273$, 
$\lambda_{n-2} \le -1.618$ and 
 $\lambda_{n-1}\le -1.618$. 
Then  $-\sqrt{m-5.907} \le \lambda_n \le -2.679$ and 
\[ f(\lambda_n) \ge
\min\{f(-\sqrt{m-5.907}), f(-2.679)\} > 1.5\sqrt{m-2.5}.  \] 
Using Lemma \ref{lem21} and $\lambda_1 < \beta (m)<\sqrt{m-1.85}$, we get 
\begin{align*} 
t(G)  &> \frac{1}{6}(f(\lambda_2) +\cdots +  
f(\lambda_5)+ f(\lambda_{n-3}) + \cdots + f(\lambda_n )) - \frac{2.5}{3} \lambda_1 \\
&> \frac{1}{6}(9.907\sqrt{m-2.5}-5\sqrt{m-1.85} - 5.164)>0, 
\end{align*}
 a contradiction. Therefore $J_3$ 
can not be an induced subgraph of $G$.

If $J_4$ is an induced subgraph of $G$, 
then Lemma \ref{lemCauchy} implies 
$\lambda_2 \ge 1.380, \lambda_3 \ge 0.827$, 
 $\lambda_4\ge 0.670$ and $\lambda_5\ge 0.338$. 
Furthermore, the negative eigenvalues of $J_4$ 
gives $\lambda_{n-3} \le -0.406$, 
$\lambda_{n-2} \le -1.209$ and 
 $\lambda_{n-1}\le -1.726$. 
Then  $-\sqrt{m-5.259} \le \lambda_n \le -2.956$ and 
\[ f(\lambda_n) \ge
\min\{f(-\sqrt{m-5.259}), f(-2.956)\} > 1.3\sqrt{m-2.5}.  \] 
Due to Lemma \ref{lem21} and $\lambda_1 < \beta (m)<\sqrt{m-1.85}$, we obtain 
\begin{align*} 
t(G)  &> \frac{1}{6}(f(\lambda_2) + \cdots + f(\lambda_5) + 
f(\lambda_{n-3}) +\cdots + f(\lambda_n )) - \frac{2.5}{3} \lambda_1 \\
&> \frac{1}{6}(9.059\sqrt{m-2.5}-5\sqrt{m-1.85} - 3.442)>0, 
\end{align*}
 a contradiction. Henceforth $J_4$ 
can not be an induced subgraph of $G$. 
\end{proof}

\begin{proof}[{\bf Proof of Lemma \ref{lem-L1234}}] 
By computation, we can obtain the eigenvalues of $L_1, \ldots ,L_4$. 

\begin{table}[H]
\centering 
\begin{tabular}{ccccccccc}
\toprule
    & $\lambda_1$  & $\lambda_2$  &  $\lambda_3$ 
    &  $\lambda_4$ &  $\lambda_5$ &  $\lambda_6$ & $\lambda_7$ & $\lambda_8$   \\ 
\midrule
 $L_1$ & 2.950 & 1.156 & 0.618 & 0.522 & 0 & $-0.790$ &
 $-1.618$ & $-2.838$ \\ 
$L_2$ & 2.753  &  1.204 & 0.641 & 0.618 & $-0.253$ &  $-0.700$ 
& $-1.618$ &$-2.645$  \\ 
$L_3$ & 3.141 & 1.139 & 0.763 & 0 & 0 & $-0.277$ & $-1.745$ & 
$-3.021$    \\  
$L_4$ & 2.964 & 1 & 0.764 & 0.513 & 0 & $-0.710$ & $-1.722$ & 
$-2.809$    \\ 
\bottomrule 
\end{tabular} 
\caption{Eigenvalues of $L_1,L_2,L_3$ and $L_4$.} 
\label{tab-L1234} 
\end{table}

Suppose on the contrary that $G$ contains 
$L_1$ as an induced subgraph. 
Then Lemma \ref{lemCauchy} yields 
$\lambda_2\ge 1.156$, $\lambda_3\ge 0.618$ and 
$\lambda_4\ge 0.522$. The negative eigenvalues of $L_1$ 
implies $\lambda_{n-2} \le -0.790$ and 
$\lambda_{n-1} \le -1.618$. Then $- \sqrt{m-2.734} \le 
\lambda_n \le -2.838$. By Lemma \ref{lem-fx}, we have 
\[ f(\lambda_n) \ge \min\{f(- \sqrt{m-2.734}), f(-2.838) \}  > 
0.1 \sqrt{m-2.5}. \] 
According to 
Lemma \ref{lem21} and $\lambda_1 < \beta (m)<\sqrt{m-1.85}$,  
it follows that 
\[ \begin{aligned} 
t(G)  &> \frac{1}{6}(f(\lambda_2) + f(\lambda_3) + f(\lambda_4) + 
f(\lambda_{n-2}) + f(\lambda_{n-1}) + f(\lambda_n )) - \frac{2.5}{3} \lambda_1 \\
&> \frac{1}{6}(5.334\sqrt{m-2.5}-5\sqrt{m-1.85} - 2.805)>0, 
\end{aligned} \] 
which is a contradiction. 
Thus $G$ does not contain $L_1$ as an induced subgraph.

Suppose on the contrary that $G$ contains 
$L_2$ as an induced subgraph. 
 Lemma \ref{lemCauchy} yields 
$\lambda_2\ge 1.204$, $\lambda_3\ge 0.641$ and 
$\lambda_4\ge 0.618$. 
The negative eigenvalues of $L_2$ 
implies $\lambda_{n-3} \le -0.253$, 
$\lambda_{n-2} \le -0.7$ and 
$\lambda_{n-1} \le -1.618$. Then $- \sqrt{m-2.918} \le 
\lambda_n \le -2.645$. Lemma \ref{lem-fx} gives  
\[ f(\lambda_n) \ge \min\{f(- \sqrt{m-2.918}), f(-2.645) \}  > 
0.2 \sqrt{m-2.5}. \] 
By Lemma \ref{lem21} and $\lambda_1 < \beta (m)<\sqrt{m-1.85}$,  
it follows that 
\[ \begin{aligned} 
t(G)  &> \frac{1}{6}(f(\lambda_2) + f(\lambda_3) + f(\lambda_4) + 
f(\lambda_{n-3}) + \cdots + f(\lambda_n )) - \frac{2.5}{3} \lambda_1 \\
&> \frac{1}{6}(5.618\sqrt{m-2.5}-5\sqrt{m-1.85} - 2.35)>0, 
\end{aligned} \] 
which is a contradiction. 
Therefore $G$ does not contain $L_2$ as an induced subgraph.

Suppose on the contrary that $G$ contains 
$L_3$ as an induced subgraph. 
 Lemma \ref{lemCauchy} yields 
$\lambda_2\ge 1.139$ and $\lambda_3\ge 0.763$. 
The negative eigenvalues of $L_3$ 
implies 
$\lambda_{n-2} \le -0.277$ and 
$\lambda_{n-1} \le -1.745$. Then $- \sqrt{m-2.504} \le 
\lambda_n \le -3.021$. Lemma \ref{lem-fx} gives  
\[ f(\lambda_n) \ge \min\{f(- \sqrt{m-2.504}), f(-3.021) \}  > 
0.001 \sqrt{m-2.5}. \] 
By Lemma \ref{lem21} and $\lambda_1 < \beta (m)<\sqrt{m-1.85}$,  
it follows that for $m\ge 4.7 \times 10^5$, 
\[ \begin{aligned} 
t(G)  &> \frac{1}{6}(f(\lambda_2) + f(\lambda_3) + f(\lambda_{n-2}) + 
f(\lambda_{n-1})  + f(\lambda_n )) - \frac{2.5}{3} \lambda_1 \\
&> \frac{1}{6}(5.005\sqrt{m-2.5}-5\sqrt{m-1.85} - 3.412)>0, 
\end{aligned} \] 
which is a contradiction. 
Thus, $G$ does not contain $L_3$ as an induced subgraph. 

Suppose on the contrary that $G$ contains 
$L_4$ as an induced subgraph. 
 Lemma \ref{lemCauchy} yields 
$\lambda_2\ge 1$, $\lambda_3\ge 0.764$ 
and $\lambda_4 \ge 0.513$. 
The negative eigenvalues of $L_4$ 
implies 
$\lambda_{n-2} \le -0.710$ and 
$\lambda_{n-1} \le -1.722$. Then $- \sqrt{m-2.818} \le 
\lambda_n \le -2.809$. Lemma \ref{lem-fx} gives  
\[ f(\lambda_n) \ge \min\{f(- \sqrt{m-2.818}), f(-2.809) \}  > 
0.15 \sqrt{m-2.5}. \] 
By Lemma \ref{lem21} and $\lambda_1 < \beta (m)<\sqrt{m-1.85}$,  
it follows that 
\[ \begin{aligned} 
t(G)  &> \frac{1}{6}(f(\lambda_2) + f(\lambda_3) + f(\lambda_4) 
 + f(\lambda_{n-2}) + 
f(\lambda_{n-1})  + f(\lambda_n )) - \frac{2.5}{3} \lambda_1 \\
&> \frac{1}{6}(5.468\sqrt{m-2.5}-5\sqrt{m-1.85} - 3.883)>0, 
\end{aligned} \] 
which is a contradiction. 
Hence $G$ does not contain $L_4$ as an induced subgraph.
\end{proof}

\end{document}